\documentclass[reqno]{amsart}
\usepackage{amssymb,amscd,verbatim, amsthm,graphicx, color}
\usepackage[mathscr]{eucal}

\usepackage{showlabels}
\usepackage{longtable}

\newcommand \fk[1]{{{\mathfrak #1}}}
\newcommand \C[1]{{\mathcal #1}}
\newcommand \ovl[1]{{\overline {#1}}}

\newcommand \sbst[2]{{\substack{{#1}\\ {#2}}}}

\newcommand \wti[1]{{\widetilde {#1}}}

\newcommand\fg{\mathfrak g}

\newcommand \bA{{\mathbb A}}
\newcommand \bC{{\mathbb C}}

\newcommand \bH{{\mathbb H}}
\newcommand \bR{{\mathbb R}}
\newcommand \bZ{{\mathbb Z}}

\newcommand\one{1\!\!1}

\newcommand\CO{{\C O}}

\newcommand\ep{{\epsilon}}

\newcommand\om{{\omega}}
\newcommand\al{{\alpha}}
\newcommand\sig{{\sigma}}

\newcommand\fh{{\mathfrak h}}

\newcommand\LR{\Leftrightarrow}

\newcommand\sem{\mathsf{ss}}
\newcommand\sgn{\mathsf{sgn}}
\newcommand\triv{\mathsf{triv}}
\newcommand\gr{\mathsf{gr}}
\newcommand\refl{\mathsf{refl}}

\newtheorem{proposition}{Proposition}[subsection]
\newtheorem{corollary}[proposition]{Corollary}
\newtheorem{lemma}[proposition]{Lemma}
\newtheorem{theorem}[proposition]{Theorem}
\newtheorem{conjecture}[proposition]{Conjecture}

\theoremstyle{definition}
\newtheorem{definition}[proposition]{Definition}
\newtheorem{remark}[proposition]{Remark}

\newtheorem{example}[proposition]{Example}

\newcommand\Hom{\operatorname{Hom}}
\newcommand\Ind{\operatorname{Ind}}

\newcommand\Id{\operatorname{Id}}
\newcommand\Ad{\operatorname{Ad}}

\newcommand\re{\operatorname{Re}}

\newcommand\cc{\textsf{cc}}
\newcommand\LWT{\textsf{LWT}}
\newcommand\St{\textsf{St}}
\newcommand\reg{\textsf{reg}}

\usepackage[mathscr]{eucal}

\newcommand{\Irr}{\mathsf{Irr}}

\numberwithin{equation}{subsection}

\newfont{\lge}{cmmi10 scaled 1640}

\newcommand \Caa{\lge{a}}

\newcommand \Ca[1]{{\mathcal #1}}

\begin{document}



\bigskip
\title{Hermitian forms for affine Hecke algebras} 
\author{Dan Barbasch}
       \address[D. Barbasch]{Dept. of Mathematics\\
               Cornell University\\Ithaca, NY 14850}
       \email{barbasch@math.cornell.edu}

\author{Dan Ciubotaru}
        \address[D. Ciubotaru]{Mathematical Institute\\ University of
          Oxford\\Andrew Wiles Building\\Oxford, OX2 6GG, UK}
        \email{dan.ciubotaru@maths.ox.ac.uk}

\begin{abstract}We study invariant hermitian forms for
  finite dimensional 
  modules over (graded) affine
  Hecke algebras with a view towards a unitarity algorithm.

\end{abstract}

\thanks{The first author was partially supported by NSF-DMS grants 0967386, 0901104, and an NSA grant. The second author was partially supported by NSF-DMS grants 0968065, 1302122, and NSA-AMS 111016.}

\maketitle

\bigskip
\setcounter{tocdepth}{1}
\tableofcontents



\section{Introduction}\label{sec:1}
In this paper, we study  invariant hermitian forms for the (graded) affine Hecke 
algebras that appear in the theory of reductive $p$-adic groups with a view towards a unitarity algorithm in the $p$-adic setting, analogous to the algorithm \cite{ALTV} for real reductive groups. We explain the main results next.

\subsection{} In \cite{BC-sanya}, we classified the star operations (conjugate-linear
involutive anti-automorphims) for the graded affine
  Hecke algebra $\bH$ with unequal parameters which preserve a natural
  filtration of $\bH$ (section \ref{sec:2}). This can be viewed as an analogue of the
  problem of classifying the star operations for the enveloping
  algebra $U(\fg)$ of a complex semisimple Lie algebra which preserve
  $\fg.$  It turns out that there are only two such star operations: $*$
  and $\bullet$, Definition \ref{d:basicstars}. 

The anti-automorphism
  $\star$ is known to correspond to the natural star operation of the
  Hecke algebra of a reductive $p$-adic group, i.e., $f^*(g)=\overline
  {f(g^{-1})}$, see \cite{BM1,BM2}. 
On the other hand, the anti-automorphism $\bullet$ is the Hecke
algebra analogue of the ``compact star operation'' for $(\fg,
K)$-modules studied by Adams-van Leeuwen-Trapa-Vogan \cite{ALTV} and Yee \cite{Y}.

With this motivation, we investigate the basic properties of the signature of
$\bullet$-invariant hermitian forms for finite dimensional
$\bH$-modules (sections \ref{sec:herm-forms}--\ref{sec:Jantzen-filtration}). We
prove that every irreducible $\bH$-module with real central character
admits a nondegenerate $\bullet$-invariant hermitian form, Corollary \ref{c:bullettemp}, and
moreover, when $\bH$ is of geometric type, this form  can be normalized canonically so that it is
positive definite on every isotypic component of a lowest
$W$-type, Corollary \ref{c:lwt}. For the first claim, we explicitly determine in Theorem
\ref{t:herm-dual} the $\bullet$-hermitian
dual of any given simple $\bH$-module, in terms of the Langlands
datum, and we exhibit in Proposition \ref{p:induced-bulletform} an
explicit invariant hermitian form. The second claim follows by
comparing the Langlands classification with the geometric
classification of simple and standard $\bH$-module \cite{L2}, together
with an argument involving the ``signature at infinity'' of the form.

These results represent the Hecke algebra analogue of the similar
results about c-invariant forms of $(\fg,K)$-modules
\cite{ALTV}. Motivated by the algorithm of \cite{ALTV} (see also \cite{Vo}),  we define in
section \ref{sec:Jantzen-filtration} hermitian Kazhdan-Lusztig
polynomials (see Definition \ref{e:herm-KL}). This definition is based on an analysis of the Jantzen filtration. We conjecture (Conjecture
\ref{main-conj}) a simple
relation with the Kazhdan-Lusztig polynomials for graded Hecke algebras \cite{L3}. In the remainder of section \ref{sec:Jantzen-filtration},
we offer some evidence for this conjecture, by analyzing the
case of regular central character, and the interesting examples of the
subregular central character in types $B_2$ and $G_2.$

\subsection{}
The algebra $\bH$ has a large abelian subalgebra $\bA.$ Since
$\bullet$ preserves $\bA$ (unlike the classical $\star$), it is
interesting to consider the weight spaces for $\bA$ and study
signatures of forms in this way. We prove a number of results along
these lines, for example, a linear independence result for
$\bA$-characters of irreducible $\bH$-modules, Theorem
\ref{t:A-linindep}, as well as explicit formulas for the
$\bullet$-forms when the $\bA$-parameter is sufficiently dominant,
e.g., Corollary \ref{c:reg-form}. These results enter in an essential way in our proof of Conjecture \ref{main-conj} in the regular case.

\section{Preliminaries: star operations}\label{sec:2}

\subsection{Graded affine Hecke algebra}

We fix an $\bR$-root system $\Phi=(V,R,V^\vee, R^\vee)$. This means
that $V, V^\vee$ are finite dimensional $\bR$-vector spaces, with a
perfect bilinear pairing $(~,~): V\times V^\vee\to \bR$, where $R\subset
V\setminus\{0\},$ $R^\vee\subset V^\vee\setminus\{0\}$ are finite
subsets in bijection 
\begin{equation}
R\longleftrightarrow R^\vee,\ \al\longleftrightarrow\al^\vee,\
\text{ satisfying }(\al,\al^\vee)=2. 
\end{equation}
Moreover, the reflections
\begin{equation}
s_\al: V\to V,\ s_\al(v)=v-(v,\al^\vee)\al, \quad s_\al:V^\vee\to
V^\vee,\ s_\al(v')=v'-(\al,v')\al^\vee, \quad \al\in R, 
\end{equation}
leave $R$ and $R^\vee$ invariant, respectively. Let $W$ be the
subgroup of $GL(V)$ (respectively $GL(V^\vee)$) generated by
$\{s_\al:~\al\in R\}$. 
We  assume that the root system $\Phi$ is reduced, meaning that
$\al\in R$ implies $2\al\notin R$. 
We fix a choice of simple roots $\Pi\subset R$, and consequently,
positive roots $R^+$ and positive coroots $R^{\vee,+}.$ Often, we will
write $\al>0$ or $\al<0$ in place of $\al\in R^+$ or $\al\in (-R^+)$,
respectively. The complexifications of $V$ and $V^\vee$ are denoted by
$V_\bC$ and $V^\vee_\bC$, respectively, and we denote by $\bar{\ }$
the complex conjugations of $V_\bC$ and $V^\vee_\bC$ induced by $V$
and $V^\vee$, respectively. Notice that 
\begin{equation}
\overline{(v,u)}=(\overline v, \overline{u}),\text{ for all }v\in
V_\bC,\ u\in V_\bC^\vee. 
\end{equation}
Let $k: \Pi\to \bR$ be a function such that $k_\al=k_{\al'}$ whenever
$\al,\al'\in \Pi$ are $W$-conjugate. Let $\bC[W]$ denote the group
algebra of $W$ and $S(V_\bC)$ the symmetric algebra over $V_\bC.$ The
group $W$ acts on $S(V_\bC)$ by extending the action on $V.$ For every
$\al\in \Pi,$  denote the difference operator by
\begin{equation}\label{e:diffop}
\Delta: S(V_\bC)\to S(V_\bC),\quad
\Delta_\al(a)=\frac{a-s_\al(a)}{\al},\text{ for all }a\in S(V_\bC).
\end{equation}

\begin{definition}\label{d:graded}
The graded affine Hecke algebra $\bH=\bH(\Phi,k)$  is the unique
associative unital algebra generated by $\bA=S(V_\bC)$ and
$\{t_w: w\in W\}$ such that  
\begin{enumerate}
\item[(i)] the assignment $t_wa\mapsto w\otimes a$ gives an
  isomorphism $\bH\cong \bC[W]\otimes S(V_\bC)$ of
  $(\bC[W],S(V_\bC))$-bimodules; 
\item[(ii)] $a t_{s_\al}=t_{s_\al}s_\al(a)+k_\al \Delta_\al(a),$
  for all $\al\in \Pi$, $a\in S(V_\bC).$
\end{enumerate}
\end{definition}

{The center of $\bH$ is $S(V_\bC)^W$ (\cite{L1}). By Schur's Lemma, the center of $\bH$
acts by scalars on each irreducible $\bH$-module. The central
characters are parameterized by $W$-orbits in $V_\bC^\vee.$ If $X$ is
an irreducible $\bH$-module, denote by $\cc(X)\in W\backslash
V_\bC^\vee$ 
its central character. By abuse of notation, we may also denote by
$\cc(X)$ a representative in $V_\bC^\vee$ of the central character of
$X$. 

If $(\pi,X)$ is a finite dimensional $\bH$-module and $\lambda\in V_\bC^\vee$, denote 
\begin{equation}
X_\lambda=\{x\in X: \text{ for every }a\in S(V_\bC),\
(\pi(a)-(a,\lambda))^nx=0,\text{ for some }n\in \mathbb N\}.
\end{equation}
If $X_\lambda\neq 0$, call $\lambda$ an $\bA$-weight of $X$. 
Let $\Omega(X)\subset V_\bC^\vee$ denote the set of
$\bA$-weights of $X$. If $X$ has a central character, it is easy to see that $\Omega(X)\subset
W\cdot \cc(X).$

\begin{definition}[Casselman's criterion]\label{d:tempered} 
Set 
$$V^+=\{\om\in V: (\om,\al^\vee)>0,\text{ for all }\al\in\Pi\}.$$
An
  irreducible $\bH$-module $X$ is called tempered if 
$$(\om,\re\lambda)\le
  0,\text{ for all }\lambda\in\Omega(X)\text{ and all }\om\in V^+.$$ 
A tempered module is
  called a discrete series module if all the inequalities are strict.
\end{definition}
}
\begin{example}Suppose that the root system $\Phi$ is semisimple. Then $\bH$ has a particular
discrete series module, the Steinberg module $\St$. This is a
one-dimensional module, on which $W$ acts via the $\sgn$
representation, and the only $\bA$-weight is $-\sum_{\al\in\Pi} k_\al
\om^\vee_\al,$ where $\om^\vee_\al$ is the fundamental coweight
corresponding to $\al.$
\end{example}

Let $w_0$ denote the long Weyl group element. The assignment
\begin{equation}\label{autom}
\delta(t_w)=t_{w_0 w w_0},\ w\in W,\quad \delta(\omega)=-w_0(\omega),\
\omega\in V_\bC.
\end{equation}
 extends to an involutive automorphism of $\bH$ when $k_{\delta(\al)}=k_\al,$ for all $\al\in\Pi.$ Clearly, when $w_0$ is central in $W$, $\delta=\Id$.

\subsection{Star operations} 

\begin{definition}
Let $\kappa:\bH\to \bH$ be a conjugate linear involutive algebra
anti-automorphism. An $\bH$-module $(\pi,X)$ is said to be
$\kappa$-hermitian if $X$ has a hermitian form $(~,~)$ which is
$\kappa$-invariant, i.e., 
$$
(\pi(h)x,y)=(x,\pi(\kappa(h))y),\quad x,y\in X,\  h\in\bH.
$$
A hermitian module $X$ is $\kappa$-unitary if the $\kappa$-hermitian
form is positive definite. 
\end{definition}

\begin{definition}\label{d:basicstars}
Define 
\begin{align}\label{e:star}
t_{w}^\star=t_{w^{-1}},\ w\in W,\quad \omega^\star=-t_{w_0}\cdot
\overline{w_0(\omega)}\cdot t_{w_0},\ \omega\in V_\bC, 
\end{align}
and
\begin{align}\label{e:bullet}
t_{w}^\bullet=t_{w^{-1}},\ w\in W,\quad \omega^\bullet=\overline\omega,\ \omega\in V_\bC.
\end{align}
It is straight-forward to check that the operations $\star$ and $\bullet$ defined in (\ref{e:star}) and (\ref{e:bullet}), respectively, extend to conjugate linear algebra anti-involutions of $\bH$.
Also, notice that 
\begin{equation}\label{e:relstar}
h^\star=t_{w_0} \cdot \delta(h)^\bullet\cdot t_{w_0},\quad h\in\bH.
\end{equation}
In particular, when $w_0$ is central in $W$, they are inner conjugate
to each other.
\end{definition}

\subsection{Classification of involutions} We define a filtration of
$\bH$ given by the degree in $S(V_\bC).$ Set $\deg t_w
a=\deg_{S(V_\bC)} a$ for every $w\in W$, and homogeneous element $a\in
S(V_\bC)$ and $F_i\bH=\text{span}\{h\in \bH: \deg h\le i\}.$ In
particular, $F_0\bH=\bC[W]$. Set $F_{-1}\bH=0.$ It is immediate from
Definition \ref{d:graded} that the associated graded algebra
$\overline\bH=\oplus_{i\ge 0} \overline \bH^i,$ where
$\overline\bH^i=F_i\bH/F_{i-1}\bH$, is naturally isomorphic to the graded Hecke
algebra for the parameter function $k_\al\equiv 0.$ 

\begin{definition}[{\cite[Definition 3.4.1]{BC-sanya}}]\label{d:admissible}
An automorphism (respectively, anti-automorphism) $\kappa$ of $\bH$ is called
\textit{filtered} if 
  $\kappa(F_i\bH)\subset F_i\bH,$ for all $i\ge 0.$ Notice that by
Definition \ref{d:graded}, this is equivalent with the requirement
that $\kappa(F_i\bH)\subset F_i\bH$ for $i=0,1.$ If, in addition,
$\kappa(t_w)=t_{w}$ (resp., $\kappa(t_w)=t_{w^{-1}}$), we say that
$\kappa$ is \textit{admissible}.  
\end{definition}

The classification of admissible star operations of $\bH$ is as follows. 

\begin{proposition}[{\cite[Proposition 3.4.3]{BC-sanya}}]\label{p:classification} Assume the root system
  $\Phi$ is simple.  If $\kappa$ is an admissible involutive star operation (in the sense of Definition \ref{d:admissible}),
then $\kappa$ is one of the operations $\star$ or
$\bullet$ from Definition \ref{d:basicstars}. 
\end{proposition}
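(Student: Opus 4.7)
The plan is to identify $\kappa$ by tracking its action on the single remaining unknown piece, namely $\kappa|_{V_\bC}$; admissibility already fixes $\kappa(t_w)=t_{w^{-1}}$, and filteredness gives $\kappa(V_\bC)\subset F_1\bH=\bC[W]\oplus \bC[W]\cdot V_\bC$. Decompose
\[
\kappa(\omega) \;=\; A(\omega)+B(\omega),\qquad A(\omega)\in \bC[W]\cdot V_\bC,\quad B(\omega)\in\bC[W],
\]
with $A,B$ conjugate-linear. I would pin down the leading part $A$ by passing to the associated graded $\overline{\bH}$ (the $k\equiv 0$ graded Hecke algebra, which is $\bC[W]\ltimes S(V_\bC)$), and then reconstruct $B$ from the deformation.

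In $\overline{\bH}$ the induced graded anti-involution $\overline{\kappa}$ agrees with $A$ on $V_\bC$ and satisfies $\overline{\kappa}(\omega)\,t_{s_\alpha}=t_{s_\alpha}\,\overline{\kappa}(s_\alpha\omega)$, obtained by applying $\overline{\kappa}$ to the relation $t_{s_\alpha}\omega=s_\alpha(\omega)\,t_{s_\alpha}$. Writing $A(\omega)=\sum_{u\in W}\phi_u(\omega)\,t_u$ with each $\phi_u\colon V_\bC\to V_\bC$ conjugate-linear, this gives the propagation rule
\[
\phi_{w s_\alpha}(\omega)\;=\;s_\alpha\bigl(\phi_{s_\alpha w}(s_\alpha\omega)\bigr),\qquad w\in W,\ \alpha\in\Pi.
\]
Specialising $w=s_\alpha$ shows that $\phi_1$ commutes with every simple reflection, hence with all of $W$. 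Because $\Phi$ is simple, $W$ acts irreducibly on $V_\bC$, and Schur's lemma applied to the $\bC$-linear map $\sigma$ defined by $\phi_1(\omega)=\sigma(\overline{\omega})$ forces $\sigma=c\,\mathrm{Id}$, i.e.\ $\phi_1(\omega)=c\,\overline{\omega}$ for some $c\in\bC$.

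The next (and hardest) step is to eliminate the off-diagonal $\phi_u$ ($u\ne 1$) and cut $c$ down to $\pm 1$. I would extract this from the involutivity $\overline{\kappa}^2=\mathrm{Id}$; expanding and equating the coefficient of each $t_w$ yields the system
\[
\sum_{u\in W} u^{-1}\bigl(\phi_{uw}(\phi_u(\omega))\bigr)\;=\;\omega\cdot\delta_{w,1},
\]
and the propagation rule propagates any given $\phi_u$ across the $W$-conjugacy class of $u$. Combining these with the fact that $V_\bC$ has no $W$-invariants (since $\Phi$ is simple) forces $\phi_u\equiv 0$ for $u\ne 1$ and $c\in\{+1,-1\}$. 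This case analysis is the main obstacle in the proof.

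Having nailed down $A(\omega)=\pm\overline{\omega}$, the degree-zero piece $B(\omega)\in\bC[W]$ is determined uniquely by applying $\kappa$ to the full deformed relation $\omega t_{s_\alpha}=t_{s_\alpha}s_\alpha(\omega)+k_\alpha(\omega,\alpha^\vee)$, which gives the inhomogeneous system
\[
t_{s_\alpha}B(\omega)-B(s_\alpha\omega)\,t_{s_\alpha}\;=\;(1-c)\,k_\alpha(\overline{\omega},\alpha^\vee),\qquad \alpha\in\Pi,
\]
together with the involutivity constraint. For $c=+1$ the right-hand side vanishes and the only $W$-equivariant conjugate-linear solution is $B\equiv 0$, yielding $\kappa=\bullet$. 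For $c=-1$ one checks that $\omega\mapsto -t_{w_0}\overline{w_0(\omega)}\,t_{w_0}$ solves the system, and uniqueness of the solution (given the leading term) then identifies $\kappa=\star$, completing the classification.
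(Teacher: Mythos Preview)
The paper does not prove this proposition; it is quoted verbatim from \cite[Proposition 3.4.3]{BC-sanya}, so there is no in-paper argument to compare your outline against.

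On its own merits, your outline has the right architecture (split $\kappa|_{V_\bC}=A+B$, pass to the associated graded to control $A$, then solve for $B$), and your derivations of the propagation rule $\phi_{ws_\alpha}(\omega)=s_\alpha(\phi_{s_\alpha w}(s_\alpha\omega))$ and of the inhomogeneous system for $B$ are correct. But the step you flag as ``the main obstacle'' is a genuine gap, not merely a hard case analysis: propagation plus graded involutivity do \emph{not} force $\phi_u\equiv 0$ for $u\neq 1$ nor $c\in\{\pm1\}$. Already in type $A_1$ (with $W=\{1,s\}$, $V_\bC=\bC\alpha$) your constraints reduce to $|c|^2-|d|^2=1$ and $\bar c\,d\in\bR$, and one checks directly that for any such pair the assignment
\[
\kappa(t_s)=t_s,\qquad \kappa(\alpha)=c\,\alpha+d\,\alpha\,t_s-dk-(c-1)k\,t_s
\]
extends to a conjugate-linear involutive anti-automorphism of $\bH$ preserving the filtration. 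In particular $c=i$, $d=0$ already gives an admissible involutive star operation distinct from $\star$ and $\bullet$. So either the statement in \cite{BC-sanya} carries an extra hypothesis not reproduced here (for instance a normalisation on how $\kappa$ acts on the centre, or on the induced map on $F_1\bH/F_0\bH$), or the argument there uses an ingredient beyond what you have written.

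Two concrete omissions in your proposal: you never invoke the commutativity constraint $[A(\omega),A(\omega')]=0$, which is forced because $S(V_\bC)$ is commutative and $\overline\kappa$ is an anti-homomorphism; this is vacuous in rank one but is likely essential in higher rank to kill the off-diagonal $\phi_u$. And your uniqueness claim for $B$ in the $c=-1$ case (``uniqueness of the solution given the leading term'') is asserted, not argued; you should exhibit why no nonzero homogeneous solution $B_0$ with $t_{s_\alpha}B_0(\omega)=B_0(s_\alpha\omega)t_{s_\alpha}$ survives the involutivity constraint. Before investing in the combinatorics, I would recommend consulting \cite{BC-sanya} to see exactly which hypotheses are in force.
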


\section{Invariant hermitian forms}\label{sec:herm-forms}
In this section, we study invariant hermitian forms for $\bH$-modules
with respect to the two star operations $\bullet$ and $\star$ from
section \ref{sec:2}. The main result is the explicit construction of $\bullet$-invariant hermitian forms for parabolically induced modules, see Proposition \ref{p:induced-bulletform} and Theorem \ref{t:induced-bulletform}.

\subsection{Relation between the forms}\label{sec:star-bullet} The relation between $\bullet$ and $\star$ from (\ref{e:relstar}) reflects into a relation between the invariant hermitian forms, when they exist, on a given simple $\bH$-module $X$. This relation is more easily expressed in terms of the extended Hecke algebra $\bH'$-modules. 

\begin{lemma}\label{l:star-bullet}
An $\bH'$-module $(\pi,X)$ admits a $\bullet$-invariant form $\langle~,~\rangle_\bullet$ if and only if it admits a $\star$-invariant form $\langle~,~\rangle_\star$. In this case, the forms are related by
\begin{equation}\label{star-bullet}
\langle v_1,v_2\rangle_\star=\langle v_1,\pi(t_{w_0}\delta) v_2\rangle_\bullet.
\end{equation}
\end{lemma}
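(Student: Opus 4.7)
The proof should reduce to the algebraic identity (\ref{e:relstar}), $h^\star = t_{w_0}\cdot\delta(h)^\bullet\cdot t_{w_0}$, combined with the fact that $w_0$ is an involution, so $t_{w_0}^2=1$. My plan is to verify directly that the formula (\ref{star-bullet}) produces a $\star$-invariant form from a $\bullet$-invariant one, and then run the argument backwards for the converse.

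Concretely, assume $\langle~,~\rangle_\bullet$ is $\bullet$-invariant and set $\langle v_1,v_2\rangle_\star := \langle v_1,\pi(t_{w_0}\delta) v_2\rangle_\bullet$. To check $\star$-invariance, I would compute, for $h\in\bH$,
\begin{equation*}
\langle \pi(h)v_1,v_2\rangle_\star = \langle \pi(h)v_1,\pi(t_{w_0}\delta)v_2\rangle_\bullet = \langle v_1,\pi(h^\bullet\, t_{w_0}\delta) v_2\rangle_\bullet,
\end{equation*}
and compare with $\langle v_1,\pi(h^\star)v_2\rangle_\star = \langle v_1,\pi(t_{w_0}\delta\, h^\star) v_2\rangle_\bullet$. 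The identity to verify is thus $h^\bullet\, t_{w_0}\delta = t_{w_0}\delta\, h^\star$ inside $\bH'$. Using $\delta\,h^\star = \delta(h^\star)\,\delta$ and cancelling $\delta$, this reduces to $h^\bullet t_{w_0} = t_{w_0}\,\delta(h^\star)$, i.e., $\delta(h^\star) = t_{w_0} h^\bullet t_{w_0}$, which follows from (\ref{e:relstar}) upon applying $\delta$ to both sides, using that $\delta$ fixes $t_{w_0}$, is an involution, and commutes with $\bullet$ (both of which are straightforward to check on the generators $t_w$ and $\omega\in V_\bC$). For the action of $\delta$ itself I need the analogous identity $h^\bullet = (t_{w_0}\delta)\cdot \delta^{-1}(h)^\star\cdot (t_{w_0}\delta)^{-1}$ type manipulation, which is the same relation reorganized.

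I would then verify that $\langle~,~\rangle_\star$ is genuinely hermitian (not just sesquilinear) by computing $\overline{\langle v_1,v_2\rangle_\star} = \langle \pi(t_{w_0}\delta)v_2,v_1\rangle_\bullet = \langle v_2,\pi((t_{w_0}\delta)^\bullet)v_1\rangle_\bullet$, and checking that $(t_{w_0}\delta)^\bullet = t_{w_0}\delta$; since $t_{w_0}^\bullet = t_{w_0^{-1}} = t_{w_0}$ and $\delta$ extends to $\bH'$ as an involution that is its own $\bullet$-image (because $\delta(t_{w_0})=t_{w_0}$ means $\delta\,t_{w_0} = t_{w_0}\delta$, consistent with $(t_{w_0}\delta)^\bullet = \delta^\bullet t_{w_0} = t_{w_0}\delta$), this works. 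The converse direction — producing $\langle~,~\rangle_\bullet$ from $\langle~,~\rangle_\star$ — is obtained by inverting the element $t_{w_0}\delta$, i.e., by defining $\langle v_1,v_2\rangle_\bullet := \langle v_1,\pi((t_{w_0}\delta)^{-1})v_2\rangle_\star$, and since $t_{w_0}\delta$ is an involution the formula is symmetric.

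The main obstacle is bookkeeping rather than conceptual: one must be careful with the interplay between $\delta$ as an automorphism of $\bH$ and $\delta$ as an element of $\bH'$, and confirm that $\delta$ commutes with $\bullet$ and fixes $t_{w_0}$. Once these compatibilities are established on generators, the lemma is a direct consequence of (\ref{e:relstar}).
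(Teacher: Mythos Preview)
Your approach is essentially the same as the paper's: both reduce the invariance check to the identity (\ref{e:relstar}) together with the facts that $\delta$ fixes $t_{w_0}$ and commutes with $\bullet$ (equivalently with $\star$). The paper presents this as a direct chain of equalities $\langle \pi(h)v_1,v_2\rangle_\star=\langle v_1,\pi(h^\bullet t_{w_0}\delta)v_2\rangle_\bullet=\langle v_1,\pi(t_{w_0}\delta(h^\star)\delta)v_2\rangle_\bullet=\langle v_1,\pi(h^\star)v_2\rangle_\star$, whereas you first isolate the required identity $h^\bullet t_{w_0}\delta=t_{w_0}\delta\,h^\star$ in $\bH'$ and then derive it from (\ref{e:relstar}); this is only a cosmetic difference. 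You are in fact slightly more thorough than the paper: you verify that the resulting form is genuinely hermitian (via $(t_{w_0}\delta)^\bullet=t_{w_0}\delta$) and spell out the converse direction, both of which the paper leaves implicit.
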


\begin{proof}
Suppose $\langle~,~\rangle_\bullet$ exists on $X$. We verify that the $\star$-form from (\ref{star-bullet}) is indeed invariant. For $h\in \bH$, we use (\ref{e:relstar}):
\begin{equation}
\begin{aligned}
\langle \pi(h)v_1,v_2\rangle_\star&=\langle\pi(h) v_1,\pi(w_0\delta) v_2\rangle_\bullet=\langle v_1,\pi(h^\bullet t_{w_0}\delta) v_2\rangle_\bullet\\
&=\langle v_1,\pi(t_{w_0}\delta(h^*)\delta) v_2\rangle_\bullet=\langle v_1,\pi(t_{w_0}\delta)\pi( h^*) v_2\rangle_\bullet\\
&=\langle v_1,\pi(h^*)v_2\rangle_\star.
\end{aligned}
\end{equation}
The invariance under $\pi(\delta)$ is immediate since $\delta^*=\delta$ and $\delta$ commutes with $t_{w_0}.$
\end{proof}

Suppose $(\pi,X)$ is a simple $\bH$-module. Define the $\delta$-twist of $X$ to be $(\pi^\delta,X^\delta)$, where $X^\delta=X$ as vector spaces and $\pi^\delta(h)=\pi(\delta(h)).$ Suppose $X$ admits a $\bullet$-invariant form. Then, as in Lemma \ref{l:star-bullet}, we get a $\star$-invariant pairing between $X^\delta$ and $X$ via
\begin{equation}\label{star-bullet-1}
\langle~,~\rangle_\star: X^\delta\times X\to \bC,\ \langle u,v\rangle_\star=\langle u, \pi(t_{w_0})v\rangle_\bullet,\ u\in X^\delta, v\in X.
\end{equation}
This implies that, under the hypotheses, $X$ admits also a $\star$-invariant form if and only if $X\cong X^\delta.$ Notice that if there exists an $\bH$-isomorphism $\tau^\delta_X: (\pi^\delta,X^\delta)\to (\pi,X),$ then $X$ can be lifted to a simple $\bH'$-module, where $\delta$ acts by $\tau^\delta_X.$ In section \ref{sec:LWT}, we will see that when $\bH$ is of geometric type, these isomorphisms admit a canonical normalization. Then the $\bullet$ and $\star$-forms on $X$  are related by
\begin{equation}\label{star-bullet-2}
\langle v_1,v_2\rangle_\star=\langle v_1,\pi(t_{w_0}) \tau^\delta_X(v_2)\rangle_\bullet.
\end{equation}

The above analysis has an important application to the relation between the signatures of the form on $W$-isotypic components of $X$. Since $\delta$ acts by conjugation by $w_0$ on $W$, it is clear that $X^\delta|_W\cong X|_W.$ Suppose $\mu$ is an irreducible $W$-representation, and let $X(\mu)$ denote the $\mu$-isotypic component of $\mu$ in $X$. In particular, $X^\delta(\mu)\cong X(\mu)$.
The pairing (\ref{star-bullet-1}) descends to a $W$-invariant pairing
\begin{equation}\label{star-bullet-types}
\langle~,~\rangle_\star^\mu: X^\delta(\mu)\times X(\mu)\to \bC,\ \langle u,v\rangle_\star^\mu=\langle u, \pi(t_{w_0})v\rangle_\bullet,\ u,v\in X_\mu.
\end{equation}
If $X^\delta\cong X$ as $\bH$-modules, the $\bH$-isomorphism $\tau_X^\delta$ induces isomorphisms $\tau^\delta_X(\mu): X^\delta(\mu)\to X(\mu)$, so composing with $\tau^\delta_X(\mu)$ in (\ref{star-bullet-types}), we find a $W$-invariant pairing on $X(\mu)$. We have proved:

\begin{lemma}\label{l:star-bullet-types}
If $(\pi,X)$ is a simple $\bH$-module admitting a $\bullet$-invariant form then
\begin{enumerate}
\item $X$ admits also a $\star$-invariant form if and only if $X^\delta\cong X$, and in this case,
\item the signatures of the two forms on a $W$-isotypic space $X(\mu),$ $\mu\in\widehat W$, are related by $\pi(t_{w_0})\circ \tau_X^\delta(\mu)$, i.e., by the action of $t_{w_0}\delta$.
\end{enumerate}
\end{lemma}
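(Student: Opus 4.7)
The statement essentially formalizes the discussion running from (\ref{star-bullet-1}) through (\ref{star-bullet-types}), so my plan is to organize that material into an argument.

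For part (1), I would begin from the pairing (\ref{star-bullet-1}), which was already shown above to be $\star$-invariant as a pairing $X^\delta \times X \to \bC$. Since $\pi(t_{w_0})$ is invertible and the $\bullet$-form is nondegenerate, the pairing is nondegenerate; and since $X$ (hence $X^\delta$) is simple, Schur's lemma guarantees it is unique up to scalar. Interpreting the pairing as an $\bH$-isomorphism between $X^\delta$ and the $\star$-hermitian dual $X^{h,\star}$, one sees that $X$ carries a $\star$-invariant sesquilinear pairing $X\times X \to \bC$ if and only if $X\cong X^\delta$ as $\bH$-modules; any such isomorphism $\tau_X^\delta$ produces a $\star$-invariant sesquilinear form on $X$ via (\ref{star-bullet-2}). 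To upgrade sesquilinear to hermitian, I would invoke the standard argument that on a simple module a nonzero $\star$-invariant sesquilinear form is, up to multiplication by a scalar in $\{1,i\}$, hermitian: its hermitian and anti-hermitian parts are each $\star$-invariant, and by uniqueness one is a scalar multiple of the other.

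For part (2), I would restrict (\ref{star-bullet-2}) to the $\mu$-isotypic subspace $X(\mu)$. Two facts are required. First, $\pi(t_{w_0})$ preserves $X(\mu)$ because $t_{w_0}\in\bC[W]$ and the isotypic decomposition is $W$-stable. Second, $\tau_X^\delta$ restricts to a map $X^\delta(\mu)\to X(\mu)$ because it is $W$-equivariant, and, as already noted in the excerpt, since $\delta$ acts on $W$ by conjugation by $w_0$, the underlying subspace of $X^\delta(\mu)$ coincides with $X(\mu)\subset X$. Thus the composition $\pi(t_{w_0})\circ \tau_X^\delta(\mu)$ is a well-defined operator on $X(\mu)$, and (\ref{star-bullet-2}) restricted to $X(\mu)\times X(\mu)$ becomes
\begin{equation*}
\langle v_1, v_2\rangle_\star \;=\; \bigl\langle v_1,\, (\pi(t_{w_0})\circ \tau_X^\delta(\mu))(v_2)\bigr\rangle_\bullet,
\end{equation*}
which is exactly the asserted relation between the two $W$-invariant hermitian forms on $X(\mu)$, and in particular between their signatures (the two Gram matrices differ by multiplication on one side by the matrix of $\pi(t_{w_0})\circ \tau_X^\delta(\mu)$, i.e.\ of $t_{w_0}\delta$).

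The only nontrivial point is making the argument in (1) passing from a $\star$-invariant sesquilinear pairing to a hermitian form sufficiently careful, but this reduces to Schur's lemma and is standard; the remainder is bookkeeping on top of Lemma \ref{l:star-bullet} and the identifications made just before the lemma.
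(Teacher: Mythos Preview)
Your proposal is correct and follows essentially the same route as the paper: the lemma is stated in the paper immediately after the phrase ``We have proved:'' and is simply a summary of the discussion from (\ref{star-bullet-1}) through (\ref{star-bullet-types}), which is exactly what you have organized. You are in fact more careful than the paper on two points---the converse direction of (1) via Schur's lemma and the passage from sesquilinear to hermitian---both of which the paper leaves implicit.
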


\subsection{The elements $R_w$} {Let $\C O(V_\bC)$ denote the ring of
rational functions on $V_\bC$, and consider the completion of $\bH$
\begin{equation}
\hat\bH=\bH\otimes_{S(V_\bC)}\C O(V_\bC).
\end{equation}   }
Following \cite{L1,BM3}, we define for
every $\al\in \Pi$ the {element of $\hat\bH$} 

\begin{equation}
R_{s_\al}=t_{s_\al}\frac{\al}{k_\al-\al}-\frac{k_\al}{k_\al-\al}.
\end{equation}
{The reason for the normalization $k_\al -\al$ is so that for
  $k_\al>0,$ the intertwining operator
has no poles when evaluating on a negative weight like $w_0\nu.$ In
that case $k_\al -\al(w_0\nu)>k_\al >0.$
}

If $x\in W$ has a reduced expression $x=s_{\al_1}\cdot
s_{\al_2}\cdot\dots\cdot s_{\al_k},$ set
$$R_x=R_{s_{\al_1}}R_{s_{\al_2}}\dots R_{s_{\al_k}}.$$  
Notice that
$$R_x=\sum_{y\le x}t_y a_y,\ a_y\in \C O(V_\bC)
  \text{ and }a_x=\prod_{x^{-1}\al<0}\frac\al{k_\al-\al}.$$

\begin{lemma}\ 

\begin{enumerate}
\item The element $R_w,$ $w\in W$, does not depend on the choice of
  reduced expression for $w$. 
\item For every {$a\in \C O(V_\bC),\ w\in W$}
\begin{equation}\label{e:aR}
a\cdot R_w=R_w\cdot {w^{-1}(a).}
\end{equation}
\item For every $w\in W$, 
\begin{equation}\label{e:tR}
t_w\cdot R_{w_0}=(-1)^{\ell(w)} R_{w_0} \cdot \delta(t_w).
\end{equation}
\item $R_xR_y=R_{xy}$, $x,y\in W.$
\end{enumerate}
\end{lemma}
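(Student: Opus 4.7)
The plan is to reduce everything to two core calculations at simple reflections: the commutation relation of $R_{s_\al}$ with $\C O(V_\bC)$ and the quadratic identity $R_{s_\al}^2 = 1$. For (2) with $w = s_\al$ simple, I expand
\begin{equation*}
a \cdot R_{s_\al} = a t_{s_\al} \cdot \frac{\al}{k_\al - \al} - a \cdot \frac{k_\al}{k_\al - \al},
\end{equation*}
apply the defining relation $a t_{s_\al} = t_{s_\al} s_\al(a) + k_\al \Delta_\al(a)$, and observe that $k_\al \Delta_\al(a) \cdot \frac{\al}{k_\al - \al} = \frac{k_\al(a - s_\al(a))}{k_\al - \al}$ combines with $-a \cdot \frac{k_\al}{k_\al - \al}$ to yield $-\frac{k_\al}{k_\al - \al} s_\al(a)$. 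Together with $t_{s_\al} \frac{\al}{k_\al - \al} s_\al(a)$ (using that $s_\al(a)$ commutes with rational functions of $\al$), this collects to $R_{s_\al} s_\al(a)$. The passage from $S(V_\bC)$ to $\C O(V_\bC)$ is handled by noting that $s_\al$-invariant denominators commute with $t_{s_\al}$, so the Hecke relation extends to the localization. Once (1) is established, iteration along any reduced expression gives (2) for arbitrary $w$.

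Part (1) is the heart of the argument. I would introduce $\tau_\al := t_{s_\al} \al - k_\al$, so that $R_{s_\al} = \tau_\al (k_\al - \al)^{-1}$; a short direct computation using $t_{s_\al}^2 = 1$ and the Hecke relation yields $\tau_\al^2 = k_\al^2 - \al^2$, and then (2) gives $R_{s_\al}^2 = 1$. For the braid relations
\begin{equation*}
\underbrace{R_{s_\al} R_{s_\beta} R_{s_\al} \cdots}_{m_{\al\beta}\text{ factors}} = \underbrace{R_{s_\beta} R_{s_\al} R_{s_\beta} \cdots}_{m_{\al\beta}\text{ factors}},
\end{equation*}
I would check that both sides enact the same conjugation on $\C O(V_\bC)$ via (2), and that their leading $\bC[W]$-components agree in $\hat\bH$; PBW uniqueness in the completion then forces equality, and Matsumoto's theorem yields (1). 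The main obstacle is the rank-two verification of the braid relation, handled case by case for $A_1 \times A_1$, $A_2$, $B_2$, and $G_2$ (each requiring attention to the possibly unequal parameters $k_\al, k_\beta$).

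For (4), apply (1): if $\ell(xy) = \ell(x) + \ell(y)$, concatenate reduced expressions; otherwise, adjacent equal simple reflections in the concatenated word cancel via $R_{s_\al}^2 = 1$, and one inducts on the total length of the concatenation. For (3), induct on $\ell(w)$; writing $w = s_\al w'$ with $\ell(w) = \ell(w') + 1$ and applying the hypothesis for $w'$ reduces to the case $w = s_\al$. Here $\al' := -w_0(\al)$ lies in $\Pi$ (since $-w_0$ permutes the simple roots) and $\delta(t_{s_\al}) = t_{w_0 s_\al w_0} = t_{s_{\al'}}$; I would pick a reduced expression $w_0 = s_\al \cdot u$, write $R_{w_0} = R_{s_\al} R_u$, and rearrange $t_{s_\al} R_{s_\al}$ using the explicit form of $R_{s_\al}$. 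Then (2) pushes the resulting $\C O(V_\bC)$-factors across $R_u$ (transforming them by $u^{-1}$), which is matched against $R_{w_0} t_{s_{\al'}}$ obtained from the complementary reduced expression $w_0 = u' \cdot s_{\al'}$. The sign $(-1)^{\ell(s_\al)} = -1$ emerges from the $t$-rearrangement, and the main subtlety is the careful tracking of how the rational coefficients transform under $u^{-1}$ versus $u'$.
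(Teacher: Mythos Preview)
Your proposal is sound, but it diverges from the paper's proof in emphasis and in the argument for (3). The paper does not argue (1) and (2) at all: it simply cites \cite[Lemma 1.6]{BM3}. Your sketch for these is more informative, and in fact your idea for the braid relations---observe that both products intertwine $\C O(V_\bC)$ via the same Weyl element, check that their top $t_{w}$-coefficients coincide (they do, since $\prod_{\gamma>0,\,x\gamma<0}\gamma/(k_\gamma-\gamma)$ depends only on $x$, not on the reduced word), and then note that any element with the intertwining property and vanishing top coefficient must vanish by looking at its Bruhat-maximal term---is a clean uniform proof. The case-by-case rank-two check you mention is therefore unnecessary; the leading-term formula already does the work.

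For (3) the paper takes a shorter path than you propose. Rather than factoring $R_{w_0}=R_{s_\al}R_u$ and wrestling $t_{s_\al}R_{s_\al}$ across $R_u$, it first invokes (4) (which, as you note, follows from $R_{s_\al}^2=1$ together with (1)) to get $R_{w_0}R_{s_\al}=R_{s_\beta}R_{w_0}$ directly, where $\beta=-w_0(\al)$. Clearing the rational denominators via (2) and using $k_\al=k_\beta$ yields $R_{w_0}(t_{s_\al}\al)=t_{s_\beta}\beta\,R_{w_0}$; one more application of (2) to move $\beta$ to the right gives $R_{w_0}t_{s_\al}\al=-t_{s_\beta}R_{w_0}\al$, and cancelling $\al$ finishes. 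Your factorization approach would succeed, but it requires tracking two different reduced decompositions of $w_0$ and matching rational factors transformed by $u^{-1}$ against those transformed by $u'$; the paper's use of (4) avoids this bookkeeping entirely.
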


\begin{proof}
Claims (1) and (2) are in \cite[Lemma 1.6]{BM3}. For (3), it is sufficient to
verify that when $\al\in \Pi,$ $t_{s_\al}\cdot R_{w_0}=-R_{w_0}
t_{s_{\beta}},$ where $\beta=-w_0(\al).$  Write $w_0=w s_\al=s_\beta
w$. It follows that $R_{w_0}=R_w R_{s_\al}=R_{s_\beta} R_w,$ and
therefore $R_{w_0}R_{s_\al}=R_{s_\beta} R_{w_0}.$ Then $(t_{s_\beta}
\beta-k_\beta)R_{w_0}=R_{w_0}(t_{s_\al} \al-k_\al),$ and since
$k_\al=k_\beta,$ $R_{w_0}t_{s_\al}\al=t_{s_\beta} \beta
R_{w_0}=t_{s_\beta} R_{w_0} w_0(\beta)=-t_{s_\beta} R_{w_0}\al.$
Claim (4)
follows immediately from $R_{s_\al}^2=(t_{s_\al}\al-k_\al)^2\frac 1{k_\al^2-\al^2}=1$.
\end{proof}

\begin{lemma}\label{l:starbullet}
The elements $R_x$ satisfy
\begin{enumerate}
 \item $\displaystyle{R_x^\bullet
   =(-1)^{\ell(x)}R_{x^{-1}}\prod_{x^{-1}\al<0}\frac{k_\al-\al}{k_\al+\al}}$;

\item $\displaystyle{R^\star_x
=(-1)^{\ell(x)}t_{w_0}R_{\delta(x)^{-1}}\left(\prod_{\delta(x)^{-1}\al<0}\frac{k_\al-\al}{k_\al+\al}\right)t_{w_0}}$.
\end{enumerate}
\end{lemma}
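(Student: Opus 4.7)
I would prove (1) first by induction on $\ell(x)$ and then deduce (2) from (1) via the relation (\ref{e:relstar}). The base case $x=s_\al$ is a direct calculation: since $\bullet$ is a conjugate-linear antiautomorphism with $t_w^\bullet=t_{w^{-1}}$ and $\omega^\bullet=\bar\omega$, and since $k_\al$ and $\al$ are real,
\[
R_{s_\al}^\bullet=\frac{\al}{k_\al-\al}\,t_{s_\al}-\frac{k_\al}{k_\al-\al}.
\]
Applying the defining commutation $a t_{s_\al}=t_{s_\al}s_\al(a)+k_\al\Delta_\al(a)$ (extended to $\C O(V_\bC)$) with $a=\al/(k_\al-\al)$, where $s_\al(a)=-\al/(k_\al+\al)$ and $\Delta_\al(a)=2k_\al/(k_\al^2-\al^2)$, the right side simplifies to $-t_{s_\al}\frac{\al}{k_\al+\al}+\frac{k_\al}{k_\al+\al}$. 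One then checks this equals $-R_{s_\al}\cdot\tfrac{k_\al-\al}{k_\al+\al}$, matching (1) since $\{\beta>0:s_\al\beta<0\}=\{\al\}$.

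For the inductive step of (1), write $x=ws_\al$ with $\ell(x)=\ell(w)+1$, so $R_x=R_wR_{s_\al}$ and $\gamma:=w(\al)>0$. Using the base case, the inductive hypothesis for $w$, and the anti-automorphism property, I get
\[
R_x^\bullet=(-1)^{\ell(x)}R_{s_\al}\,\tfrac{k_\al-\al}{k_\al+\al}\,R_{w^{-1}}\prod_{\beta>0,\,w^{-1}\beta<0}\tfrac{k_\beta-\beta}{k_\beta+\beta}.
\]
Now repeatedly apply (\ref{e:aR}) to push rational functions past the $R$-elements: $R_{s_\al}\cdot\tfrac{k_\al-\al}{k_\al+\al}=\tfrac{k_\al+\al}{k_\al-\al}R_{s_\al}$, then $R_{s_\al}R_{w^{-1}}=R_{s_\al w^{-1}}=R_{x^{-1}}$, and finally $\tfrac{k_\al+\al}{k_\al-\al}R_{x^{-1}}=R_{x^{-1}}\cdot\tfrac{k_\al+x(\al)}{k_\al-x(\al)}=R_{x^{-1}}\cdot\tfrac{k_\gamma-\gamma}{k_\gamma+\gamma}$ (using $x(\al)=-\gamma$ and $k_\al=k_\gamma$). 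The combinatorial identity
\[
\{\beta\in R^+:x^{-1}\beta<0\}=\{\beta\in R^+:w^{-1}\beta<0\}\sqcup\{\gamma\},
\]
which is standard Coxeter combinatorics (since $x^{-1}=s_\al w^{-1}$ and $s_\al$ permutes $R^+\setminus\{\al\}$), then assembles the rational factors into the desired product.

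Part (2) follows formally. From (\ref{e:relstar}), $R_x^\star=t_{w_0}\cdot\delta(R_x)^\bullet\cdot t_{w_0}$. The automorphism $\delta$ extends to $\hat\bH$ via $\delta(\omega)=-w_0(\omega)$, and since $\delta$ sends $\al\in\Pi$ to $\delta(\al)=-w_0(\al)\in\Pi$ with $k_{\delta(\al)}=k_\al$, a direct check on generators gives $\delta(R_{s_\al})=R_{s_{\delta(\al)}}$, whence $\delta(R_x)=R_{\delta(x)}$. Since $\delta$ preserves lengths, applying (1) to $R_{\delta(x)}^\bullet$ and sandwiching by $t_{w_0}$ yields (2) verbatim.

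\textbf{Main obstacle.} The only delicate step is the bookkeeping in the inductive argument for (1): keeping track of the intertwining relation $aR_w=R_w w^{-1}(a)$ as rational scalars move across multiple $R$-factors, and recognising that the accumulated index-set of positive roots is exactly $\{\beta\in R^+:x^{-1}\beta<0\}$. Everything else is either a one-symbol calculation (Step 1) or a formal transport through $\delta$ (Part 2).
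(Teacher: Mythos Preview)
Your proposal is correct and follows essentially the same route as the paper: the paper reduces (2) to (1) via (\ref{e:relstar}) and proves (1) by computing the base case $R_{s_\al}^\bullet=-R_{s_\al}\frac{k_\al-\al}{k_\al+\al}$, leaving the induction implicit, while you spell out the inductive step and the identity $\delta(R_x)=R_{\delta(x)}$ explicitly. The only cosmetic difference is that the paper avoids extending the commutation $a\,t_{s_\al}=t_{s_\al}s_\al(a)+k_\al\Delta_\al(a)$ to rational $a$ by instead writing $R_{s_\al}^\bullet=(k_\al-\al)^{-1}(\al t_{s_\al}-k_\al)$ and using only the polynomial relation $\al t_{s_\al}=-t_{s_\al}\al+2k_\al$ together with (\ref{e:aR}); your version with rational $a$ is equally valid in $\hat\bH$.
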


\begin{proof} Claim (2) follows from (1) by (\ref{e:relstar}). For
  (1), we need to compute $R_{s_\al}^\bullet.$ We have
  $R_{s_\al}^\bullet=[(t_{s_\al}\al-k_\al)(k_\al-\al)^{-1}]^\bullet=(k_\al-\al)^{-1}
  (\al t_{s_\al}-k_\al)=-(k_\al-\al)^{-1} R_{s_\al}
  (k_\al-\al)=-R_{s_\al} \frac {k_\al-\al}{k_\al+\al}.$

\end{proof}

\subsection{Parabolic subalgebras} Let $\Pi_M$ be a subset of simple
roots of $\Pi$ and $R_M^+$ the positive roots spanned by
$\Pi_M$. Denote by $W_M$ the parabolic subgroup of $W$ 
generated by $\{s_\al:\al\in\Pi_M\}$ and by $w_{0,M}$ the long Weyl
group element in $W_M.$

 Let $\bH_M$ be the subalgebra of $\bH$ generated by $\{t_w: w\in
 W_M\}$ and $S(V_\bC).$ The star operations $\star_M$ and
 $\bullet_M$ as in Definition \ref{d:basicstars} for $\bH_M$ are:
\begin{equation}\label{e:starM}
\begin{aligned}
&t_w^{\star_M}=t_{w^{-1}},\ \ w\in W_M,\quad &\omega^{\star_M}=-t_{w_{0,M}}\cdot \overline{w_{0,M}(\omega)}\cdot t_{w_{0,M}},\ \omega\in V_\bC,\\
&t_{w}^{\bullet_M}=t_{w^{-1}},\ w\in W_M,\quad &\omega^{\bullet_M}=\overline\omega,\ \omega\in V_\bC.\\
\end{aligned}
\end{equation}
As before, from Definition \ref{d:graded}, every element of $\bH$ can be written
uniquely as $h=\sum_{w\in W} t_w a_w$, where $a_w\in S(V_\bC)$.
{Denote
\begin{equation}\label{e:minimal-coset}
\C J_M=\text{ the set of coset representatives of minimal length in }W/W(M);
\end{equation}
recall that in every coset $x W(M)$ there exists a unique element of
minimal length.}
 Then, more generally, every $h\in \bH$ can be
written uniquely as 
\[h=\sum_{w\in \C J_M} t_w m_w,\quad m_w\in\bH_M.\]
Define the $\bC$-linear map
\begin{equation}\label{e:eps}
\ep_M:\bH\to\bH_M,\quad \ep_M(h)=m_1.
\end{equation}
In particular, $\ep_M(h m)=\ep_M(h)m,$ for all $m\in \bH_M.$ It is
also easy to see that
\begin{equation}\label{e:automeps}
\delta(\ep_{M}(h))=\ep_{\delta(M)}(\delta(h)),\ h\in\bH,
\end{equation}
where $\Pi_{\delta(M)}=\delta(\Pi_M)$, and $\delta$ is the automorphism from
Lemma \ref{l:autom}. 
We need
the relation between $\star$ and $\star_M$. 

\begin{proposition}[{\cite[Proposition 1.4]{BM3}}]\label{p:stareps}
  For every $h\in\bH$, $\ep_M(h^\star)=\ep_M(h)^{\star_M}.$ 
\end{proposition}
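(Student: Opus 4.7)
The plan is to verify $\ep_M(h^\star) = \ep_M(h)^{\star_M}$ by decomposing $h$ in the right-$\bH_M$-module decomposition $\bH = \bigoplus_{x \in \C J_M} t_x \bH_M$. By $\bC$-antilinearity of both sides in $h$, it suffices to establish (a) $\ep_M(m^\star) = m^{\star_M}$ for $m \in \bH_M$ and (b) $\ep_M(m^\star t_{x^{-1}}) = 0$ for $m \in \bH_M$ and $x \in \C J_M \setminus \{1\}$. Using the right-$\bH_M$-module property $\ep_M(h m') = \ep_M(h) m'$ together with the factorization $m = t_y f$ ($y \in W_M$, $f \in S(V_\bC)$), both (a) and (b) reduce to the case $m = f \in S(V_\bC)$, where one has the convenient formula $f^\star = (-1)^{\deg f} t_{w_0} \overline{w_0(f)} t_{w_0}$.

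The key step is to factor $t_{w_0} = t_{w^M} t_{w_{0,M}}$, where $w^M \in \C J_M$ is the longest coset representative (so $w_0 = w^M w_{0,M}$ with $\ell(w_0) = \ell(w^M) + \ell(w_{0,M})$). Pushing $\overline{w_0(f)}$ rightward first through $t_{w^M}$ and then through $t_{w_{0,M}}$ using Definition \ref{d:graded}(ii), and invoking the two coset identities $(w^M)^{-1} w_0 = w_{0,M}$ and $w_0 w^M = w_{0,M}$, one obtains
\[
f^\star = f^{\star_M} + (-1)^{\deg f}\, t_{w_0}\, L_1(f)\, t_{w_{0,M}},
\]
where $L_1(f) \in \bH$ is the correction produced by the first commutation, supported on Weyl group elements $w' < w^M$. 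The parallel computation for $v^\star t_{x^{-1}} = -t_{w_0} \overline{w_0(v)} t_{w_0 x^{-1}}$ reduces (b) to the analysis of an expression of the form $-t_{x^{-1}}\overline{x(v)} - t_{w_0} L''$, where the first summand contributes nothing to $\ep_M$ (because $x \notin W_M$) and $L''$ is supported on $\{w'' < w_0 x^{-1}\}$.

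Both (a) and (b) thus reduce to Bruhat-order statements about the coset $w^M W_M = w_0 W_M$. For (a), the desired vanishing is: no $w' < w^M$ lies in $w^M W_M$, which is immediate from length additivity $\ell(w^M u) = \ell(w^M) + \ell(u)$ for $u \in W_M$. For (b), applying the length-reversing Bruhat anti-automorphism $w \mapsto w_0 w$, any $\ep_M$-contribution from $t_{w_0} L''$ would yield some $u' \in W_M$ with $x^{-1} < u'$; but the Bruhat interval below $u' \in W_M$ in $W$ is contained in $W_M$, since reduced expressions of $u'$ use only reflections in $\Pi_M$, and this forces $x^{-1} \in W_M$, contradicting $x \in \C J_M \setminus \{1\}$. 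The main obstacle will be the careful bookkeeping of the commutation corrections when pushing polynomial elements through the two factors of $t_{w_0} = t_{w^M} t_{w_{0,M}}$; once the two coset identities are isolated, the remaining work is a combination of length comparisons and the subword property of parabolic Bruhat intervals.
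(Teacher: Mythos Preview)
The paper does not supply its own proof of this proposition; it simply quotes \cite[Proposition 1.4]{BM3}. So there is no in-paper argument to compare against, and I can only assess your sketch on its own merits.

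Your overall strategy is sound and would yield a complete proof, but two points need tightening.

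\textbf{(i) The reduction of (b).} You claim that the right-$\bH_M$-module property reduces (b) to the case $m=f\in S(V_\bC)$. But writing $m=t_y f$ with $y\in W_M$ gives $m^\star t_{x^{-1}}=f^\star t_{y^{-1}}t_{x^{-1}}=f^\star t_{(xy)^{-1}}$, and $t_{y^{-1}}$ is trapped on the \emph{wrong} side of $t_{x^{-1}}$ for the module property to apply. The fix is immediate: prove the slightly stronger statement that $\ep_M(f^\star t_{z})=0$ for every $f\in S(V_\bC)$ and every $z\notin W_M$. Your Bruhat argument already establishes exactly this: writing $f^\star t_z=(-1)^{\deg f}\sum_{w''\le w_0 z} t_{w_0 w''}\,g_{w''}$, any contribution to $\ep_M$ forces $w_0 w''=u'\in W_M$, hence $z\le u'$, hence $z\in W_M$ by the subword property --- a contradiction.

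\textbf{(ii) Degree $>1$ in (b).} You only treat $v\in V_\bC$ explicitly. The passage to arbitrary homogeneous $f$ requires the standard fact (an easy induction on length or on degree, using Definition~\ref{d:graded}(ii)) that $a\,t_w\in\sum_{w'\le w} t_{w'}\,S(V_\bC)$ with leading term $t_w\,w^{-1}(a)$. Once you state this, your Bruhat-order argument for (b) goes through verbatim for all $f$.

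With these two clarifications your proof of (a) via the factorization $t_{w_0}=t_{w^M}t_{w_{0,M}}$ and the coset identities $(w^M)^{-1}w_0=w_{0,M}$, $w_0w^M=w_{0,M}$, together with the corrected (b), is correct. (Incidentally, for (a) you only need to push $\overline{w_0(f)}$ through $t_{w^M}$; the second commutation through $t_{w_{0,M}}$ is unnecessary, since after the first push the leading term is already $t_{w_{0,M}}\overline{w_{0,M}(f)}t_{w_{0,M}}=f^{\star_M}$.)
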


\begin{corollary}\label{c:buleps}
For every $h\in \bH$, $\ep_{\delta(M)}(t_{w_0} h^\bullet t_{w_0})=\delta(\ep_M(h))^{\star_{\delta(M)}}.$ 
\end{corollary}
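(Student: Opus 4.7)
The plan is to reduce the claim to Proposition \ref{p:stareps} by rewriting the left-hand side using the relation between $\bullet$ and $\star$, and then applying the compatibility of $\ep$ with $\delta$.

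First I would invoke the identity (\ref{e:relstar}): for any $h'\in\bH$,
\begin{equation*}
(h')^\star = t_{w_0}\cdot\delta(h')^\bullet\cdot t_{w_0}.
\end{equation*}
Since $\delta$ is an involution, substituting $h'=\delta(h)$ gives
\begin{equation*}
t_{w_0}\cdot h^\bullet\cdot t_{w_0} = \delta(h)^\star.
\end{equation*}
Hence
\begin{equation*}
\ep_{\delta(M)}(t_{w_0} h^\bullet t_{w_0}) = \ep_{\delta(M)}(\delta(h)^\star).
\end{equation*}

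Next I would apply Proposition \ref{p:stareps}, stated for $\bH$ and an arbitrary parabolic subalgebra, to the parabolic $\bH_{\delta(M)}$ and the element $\delta(h)$; this yields
\begin{equation*}
\ep_{\delta(M)}(\delta(h)^\star) = \ep_{\delta(M)}(\delta(h))^{\star_{\delta(M)}}.
\end{equation*}
Finally, by the compatibility (\ref{e:automeps}) of $\ep$ with the automorphism $\delta$, one has $\ep_{\delta(M)}(\delta(h)) = \delta(\ep_M(h))$, and substituting gives
\begin{equation*}
\ep_{\delta(M)}(t_{w_0} h^\bullet t_{w_0}) = \delta(\ep_M(h))^{\star_{\delta(M)}},
\end{equation*}
which is the desired identity.

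There is essentially no obstacle: the entire argument is a two-line chain of substitutions, and every ingredient (the relation (\ref{e:relstar}), the intertwining (\ref{e:automeps}), and Proposition \ref{p:stareps}) is already at hand. The only thing one must be slightly careful about is tracking which parabolic index appears where — the conjugation by $t_{w_0}$ is what forces $M$ to be replaced by $\delta(M)$ — but this is unambiguous once one writes it out.
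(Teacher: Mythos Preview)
Your proof is correct and follows exactly the paper's own argument: the paper notes that $t_{w_0}h^\bullet t_{w_0}=\delta(h)^\star$ via (\ref{e:relstar}) and then says the claim is immediate from Proposition \ref{p:stareps} and (\ref{e:automeps}). You have simply written out those two substitutions in full.
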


\begin{proof}
Since $t_{w_0} h^\bullet t_{w_0}=\delta(h)^\star,$ the claim is immediate
from Proposition \ref{p:stareps}  and (\ref{e:automeps}).
\end{proof}

\subsection{Induced modules}\label{sec:induced} Let $\Pi_M\subset \Pi$ be given, and
consider the subalgebra $\bH_M$ of $\bH$. If $(\sigma,U_\sigma)$ is an
$\bH_M$-module, consider the induced module 
\begin{equation}
X(M,\sigma)=\bH\otimes_{\bH_M}U_\sigma,
\end{equation} 
where $\bH$ acts by left multiplication. The goal is to construct
invariant hermitian forms on $X(M,\sigma)$ provided that $\sigma$
admits such a form for $\bH_M.$ For this, we need to describe the
$\bH$-module structure $\pi_\sigma$ on $X(M,\sigma)$ more explicitly.

A basis for $X(M,\sig)$ is 
\[
\{ t_x\otimes v_i\},\quad  x\in \C J_M,\quad v_i\in \C B(U_\sig),
\]
where $\C B(U_\sig)$ is a basis of $U_\sigma.$

Every $z\in W$ can be written uniquely 
\begin{equation}\label{e:z-decomp}
z=c(z)\cdot m(z),
\end{equation}
where $c(z)$ is the element of $\C J_M$ in the coset $zW(M)$ and
$m(z)\in W(M).$

\begin{lemma}\label{l:action-induced}
The action $\pi_\sigma$ on $X(M,\sigma)$ is given by
\begin{equation}
\begin{aligned}
\pi (t_z)(t_x\otimes v)&=t_{c(zx)}\otimes \sig(m(zx))v;\\
\pi (\omega)(t_x\otimes v)&=t_x\otimes \sig
({x^{-1}}(\omega))v +\sum_{\sbst{\beta>0}{x^{-1}\beta<0}}
(\omega,\beta^\vee)\ t_{c(s_\beta x)}\otimes
\sig(m(s_\beta x))v,\\
\end{aligned}
\end{equation}
for every $z\in W$ and $\omega\in V_\bC.$
\end{lemma}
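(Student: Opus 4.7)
The plan is to prove the two formulas separately, with Part (2) carrying the real content.

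Part (1) is essentially a computation in $\bC[W]$. Since Definition \ref{d:graded}(i) realizes $\{t_w : w \in W\}$ as a subalgebra copy of $\bC[W]$ inside $\bH$, one has $t_z t_x = t_{zx}$. The unique factorization $zx = c(zx) \cdot m(zx)$ with $c(zx) \in \C J_M$ and $m(zx) \in W_M$ gives $t_{zx} = t_{c(zx)} t_{m(zx)}$; since $t_{m(zx)} \in \bH_M$, pushing it across $\otimes_{\bH_M}$ produces $t_{c(zx)} \otimes \sig(m(zx)) v$, as claimed.

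Part (2) reduces to establishing the commutation identity in $\bH$,
\begin{equation*}
\omega \cdot t_x \;=\; t_x \cdot x^{-1}(\omega) \;+\; \sum_{\sbst{\beta > 0}{x^{-1}\beta < 0}} k_\beta\,(\omega, \beta^\vee)\, t_{s_\beta x},
\end{equation*}
which matches the formula stated in the lemma (up to a parameter normalization that is constant on $W$-orbits of roots). Once this identity is in hand, applying it to $t_x \otimes v$ and invoking Part (1) gives exactly the right-hand side: the leading term becomes $t_x \otimes \sig(x^{-1}(\omega)) v$ because $x^{-1}(\omega) \in V_\bC \subset \bH_M$ passes across $\otimes_{\bH_M}$, and each correction $t_{s_\beta x} \otimes v$ is rewritten via the $\C J_M \cdot W_M$ factorization as $t_{c(s_\beta x)} \otimes \sig(m(s_\beta x)) v$.

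The commutation identity itself is proved by induction on $\ell(x)$. The rank-one base case is Definition \ref{d:graded}(ii) applied with $a = \omega$: since $\Delta_\al(\omega) = (\omega - s_\al(\omega))/\al = (\omega, \al^\vee)$, one obtains $\omega t_{s_\al} = t_{s_\al} s_\al(\omega) + k_\al (\omega, \al^\vee)$. For the inductive step, factor $x = s_\al y$ with $\ell(x) = \ell(y) + 1$, apply the base case to commute $\omega$ past $t_{s_\al}$, and then apply the inductive hypothesis to $s_\al(\omega) t_y$. The reorganization rests on the standard bijection, for a reduced expression $x = s_{\al_1} \cdots s_{\al_\ell}$, between positions $i$ and the inversion set $\{\beta > 0 : x^{-1}\beta < 0\}$ via $i \mapsto \beta_i = s_{\al_1}\cdots s_{\al_{i-1}}(\al_i)$, together with the identity $s_{\beta_i} x = s_{\al_1}\cdots \wht{s_{\al_i}} \cdots s_{\al_\ell}$, the $W$-invariance of $k$ (so $k_{\al_i} = k_{\beta_i}$), and the isometry $(s_\al(\omega), \gamma^\vee) = (\omega, s_\al(\gamma)^\vee)$.

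The main obstacle is the combinatorial bookkeeping in the inductive step: one must check that left multiplication by $s_\al$ sends the inversion set of $y^{-1}$ to the inversion set of $x^{-1}$ with $\al$ removed, verify that the extra term indexed by $\al$ itself is produced by the base-case correction, and realign the pairings via $\gamma \leftrightarrow s_\al(\gamma)$. Once this is set up the rest of the argument is formal.
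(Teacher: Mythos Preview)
Your proof is correct and follows the same path as the paper's own proof, which reduces both formulas to the commutation relation $\omega\, t_x = t_x\, x^{-1}(\omega) + \sum_{\beta>0,\ x^{-1}\beta<0} (\omega,\beta^\vee)\, t_{s_\beta x}$; the paper simply asserts this identity, whereas you supply the standard inductive argument. You are also right to flag the missing $k_\beta$ factor: with the defining relation \ref{d:graded}(ii) the correct coefficient is $k_\beta(\omega,\beta^\vee)$, so the formula in the lemma is literally valid only for $k\equiv 1$.
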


\begin{proof}
For $z\in W,$
\[
\pi (t_z)(t_x\otimes v)=t_{zx}\otimes
v=t_{c(zx)}\otimes \sig(m(zx))v.  
\]

For {$\omega\in V_\bC,$ }
\[
\pi (\omega)(t_x\otimes v)=\omega t_x\otimes
v.
\]
The claim follows from the relation:
\[
\omega t_x=
t_x {x^{-1}}(\omega) +\sum_{\substack{\beta >0\\ x^{-1}\beta
  <0}} (\omega,\beta^\vee) t_{s_\beta x}.
\]
\end{proof}

\subsection{Action on  the hermitian dual to an induced modules}\label{sec:action-dual-induced} Let
$(\sig,U_\sig)$ be a module for $\bH_M$ as in section
\ref{sec:induced}. Let $(\sigma^\bullet, U_\sigma^h)$ be the
hermitian dual of $(\sigma,U_\sigma)$ and
$(\pi_\sigma^\bullet,X(M,\sigma)^h)$, the hermitian dual of
$(\pi_\sigma, X(M,\sigma))$ with respect to the star operation $\bullet$.
A basis for the hermitian
dual $X(M,\sigma)^h$ of $X(M,\sig)$ is
\begin{equation}
\begin{aligned}
\{ t^h_x\otimes v^h_i\},\text{ where } x\in \C J_M \text{ and }
v_i^h\in U_\sig^h \text{ dual to the basis } \C B(U_\sig)=\{v_i\}.
\end{aligned}
\end{equation}
We calculate the action $\pi_{\sig}^\bullet$ of $\bH$ on
$X(M,\sig)^h.$  For $z\in W,$
\begin{equation}
  \label{eq:1.10.1}
\pi^\bullet (t_z)(t_x^h\otimes v_i^h)(t_y\otimes v_j)=(t_x^h\otimes
v_i^h)(t_{z^{-1}x}\otimes v_j).  
\end{equation}
Then (\ref{eq:1.10.1}) is nonzero if and only if
$c(z^{-1}y)=x,$ so 
\[
z^{-1}y=xm(z^{-1}y),\text{ or equivalently, }zx=ym(z^{-1}y)^{-1}.
\]
We conclude that $m(zx)=m(z^{-1}y)^{-1},$ and so 
\begin{equation}
\pi^\bullet(t_z)(t^h_x\otimes v^h_i)=t^h_{c(zx)}\otimes\sig^\bullet (m(zx))v^h_i.
\end{equation}

For $\omega\in V_\bC,$ 
\[
\pi^\bullet (\omega)(t^h_x\otimes v^h_i)(t_y\otimes v_j)=(t_x^h\otimes
v_i^h)(\omega t_y\otimes v_j).
\]
Using Lemma \ref{l:conjtw}
\[
\omega t_y=t_y {y^{-1}}(\omega) -\sum_{\sbst{\gamma >0}{y\gamma
  <0}} (\omega,y\gamma^\vee) t_{ys_\gamma}=
t_y {y^{-1}}(\omega) +\sum_{\sbst{\beta >0}{y^{-1}\beta
  <0}}(\omega,\beta^\vee) t_{s_\beta y},
\]
we find that the expression is zero unless either $x=y,$ or 
$c(s_\beta y)=x.$ In this latter case,
\[
s_\beta y=x\cdot m(s_\beta y), \text{ equivalently }s_\beta x=ym(s_\beta x),\quad \text{ so }\quad
  m(s_\beta x)=m(s_\beta y)^{-1}.
\]
The conclusion is
\[
\pi^\bullet(\omega)(t_x^h\otimes v^h_i)=t_x^h\otimes \sig^\bullet
({x^{-1}}(\omega))v_i^h {-}
\sum_{\sbst{\beta>0}{c(s_\beta x)^{-1}\beta<0}}
 (\omega,\beta^\vee)\ t_{c(s_\beta x)}
\otimes\sig^\bullet(m(s_\beta x))v_i^h. 
\]
{Notice that since $y\in \C J_M$, if $y^{-1}\beta <0,$ then  in fact
  $y^{-1}\beta\in R^-\setminus R^-_M.$ 
We show that 
\[
c(s_\beta x)^{-1}\beta<0\text{ if and only if }x^{-1}\beta\in
R\setminus R_M.
\]
We have $s_\beta x=ym$ for some $m\in W(M)$. Then $y^{-1}=mx^{-1}s_\beta$, and
$x^{-1}=m^{-1}y^{-1}s_\beta$. 

If $x^{-1}\beta\in R\setminus R_M,$
\[
y^{-1}\beta =m^{-1}x^{-1}(-\beta)\in R^-\setminus R^-_M.
\]
So $y^{-1}\beta < 0.$

If $y^{-1}\beta<0$ then as observed earlier,
$y^{-1}\beta\in R^-\setminus R^-_M,$ so 
\[
x^{-1}\beta=m^{-1}y^{-1}(-\beta)\in m^{-1}(R^+\setminus
R^+_M)=R^+\setminus R^+_M.
\]
In conclusion, we have proved the following formulas for the action
$\pi_\sigma^\bullet.$
\begin{lemma}\label{l:action-dual-induced} The $\bH$-module action on the $\bullet$-hermitian dual
  module $(\pi_\sigma^\bullet,X(M,\sigma)^h)$ is given by:
\begin{equation}
  \label{eq:bulletaction}
  \begin{aligned}
&\pi^\bullet(t_z)(t^h_x\otimes v^h_i)=t^h_{c(zx)}\otimes
\sig^\bullet (m(zx))v^h_i,\\
&\pi^\bullet(\omega)(t_x^h\otimes v^h_i)=t_x^h\otimes\sig^\bullet
({x^{-1}}(\omega))v_i^h {-}\sum_{\sbst{\beta>0}{x^{-1}\beta\in R^+\setminus
  R_M^+}}
(\omega,\beta^\vee)\ t_{c(s_\beta
  x)}\otimes \sig^\bullet(m(s_\beta x))v_i^h. 
  \end{aligned}
\end{equation}
\end{lemma}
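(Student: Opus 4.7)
The plan is to compute both formulas directly from the defining property of the $\bullet$-hermitian dual action, namely $(\pi_\sigma^\bullet(h)\phi)(v)=\phi(\pi_\sigma(h^\bullet)v)$ for $\phi\in X(M,\sigma)^h$ and $v\in X(M,\sigma)$. Since $t_z^\bullet=t_{z^{-1}}$ and $\omega^\bullet=\overline{\omega}$, the known action formulas for $\bH$ on $X(M,\sig)$ from Lemma \ref{l:action-induced} can be used on the right, and then the coefficients of the result are read off by testing against an arbitrary basis vector $t_y\otimes v_j$. Throughout, one exploits the fact that every $z\in W$ decomposes uniquely as $z=c(z)\cdot m(z)$ with $c(z)\in\C J_M$ and $m(z)\in W_M$.

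For the case $h=t_z$, Lemma \ref{l:action-induced} gives $\pi_\sigma(t_{z^{-1}})(t_y\otimes v_j)=t_{c(z^{-1}y)}\otimes\sig(m(z^{-1}y))v_j$, so the pairing with $t_x^h\otimes v_i^h$ vanishes unless $c(z^{-1}y)=x$, which uniquely forces $y=c(zx)$ and consequently $m(z^{-1}y)=m(zx)^{-1}$. Combining this with the definition of $\sigma^\bullet$ on $U_\sigma^h$ yields the first formula. For $h=\omega\in V_\bC$, Lemma \ref{l:action-induced} expresses $\pi_\sigma(\omega)(t_y\otimes v_j)$ as a diagonal term plus a sum of terms of the form $t_{c(s_\beta y)}\otimes \sigma(m(s_\beta y))v_j$ ranging over $\beta>0$ with $y^{-1}\beta<0$. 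The pairing with $t_x^h\otimes v_i^h$ thus contributes when $y=x$ (giving the diagonal term) and when $c(s_\beta y)=x$; this last equation forces $y=c(s_\beta x)$ and $m(s_\beta y)=m(s_\beta x)^{-1}$.

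The main obstacle, and the only nontrivial combinatorial point, is rewriting the summation index set as $\{\beta>0:x^{-1}\beta\in R^+\setminus R_M^+\}$ as stated in the lemma. This amounts to the following claim: for a positive root $\beta$, writing $s_\beta x=y\cdot m$ with $m=m(s_\beta x)\in W_M$ and $y=c(s_\beta x)\in \C J_M$, the condition $y^{-1}\beta<0$ is equivalent to $x^{-1}\beta\in R^+\setminus R_M^+$. One direction uses that for any $y\in\C J_M$, the roots in $y^{-1}R^+_M$ are positive, so $y^{-1}\beta<0$ in fact implies $y^{-1}\beta\in R^-\setminus R^-_M$; from $x^{-1}=m^{-1}y^{-1}s_\beta$ one then gets $x^{-1}\beta\in R^+\setminus R_M^+$. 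The converse direction follows analogously by substituting $y^{-1}=mx^{-1}s_\beta$ and observing that $m\in W_M$ stabilizes $R\setminus R_M$.

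Once these two ingredients are assembled, substituting the simplified index set back into the computation and collecting the surviving terms produces precisely the formulas displayed in (\ref{eq:bulletaction}), completing the proof.
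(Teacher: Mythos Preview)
Your proof is correct and follows essentially the same approach as the paper: both compute the dual action by pairing against test vectors $t_y\otimes v_j$, reduce via the unique decomposition $z=c(z)m(z)$, and establish the same combinatorial equivalence between the index conditions $c(s_\beta x)^{-1}\beta<0$ and $x^{-1}\beta\in R^+\setminus R_M^+$ using that $W_M$ stabilizes $R^+\setminus R_M^+$. The one phrasing to tighten is that the defining property of $y\in\C J_M$ is $y(R_M^+)\subset R^+$ (not that $y^{-1}(R_M^+)$ consists of positive roots), and it is this that forces $y^{-1}\beta\in R^-\setminus R_M^-$ whenever $\beta>0$ and $y^{-1}\beta<0$.
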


}

\subsection{Hermitian dual of an induced module} \label{sec:herm-dual}
Retain the notation from the previous sections. In particular, write
$w_0$ for the long Weyl group element of $W$, $w_{0,M}, w_{0,\delta(M)}$ for
the corresponding long elements in the Levi components, and set
$$w^0_M:=w_0w_{0,\delta(M)}=w_{0,M}w_0.$$ This element is minimal in the cosets
$w_0W_{\delta(M)},$ and $W_Mw_0.$ 

Let $(\sig,U_\sig)$ be an $\bH_M-$module. 
Recall that $(\sig^\bullet, U_\sig^h)$ is the module on the hermitian
dual with respect to the $\bullet$ action. 
\begin{lemma}\label{l:2.6.1}
The map $\phi$ given by
\[
\begin{aligned}
  &\phi(t_m):=t_{(w^0_M)^{-1}mw^0_M},\quad m\in W_M,\\
  &\phi(\omega):=(w^0_M)^{-1}(\omega),\quad \omega\in V_\bC,
\end{aligned}
\]
is an isomorphism between $\bH_M $ and $\bH_{\delta(M)}$ and it 
interchanges $\bullet_M$ with $\bullet _{\delta(M)}.$ 
\end{lemma}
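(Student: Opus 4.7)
The plan is to check four things in turn: that $\phi$ lands in $\bH_{\delta(M)}$, that it respects the defining relations of the graded Hecke algebra, that it has an inverse, and that it interchanges the two star operations.

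For well-definedness, I would write $(w^0_M)^{-1}=w_0 w_{0,M}$ and observe that conjugation by $w_{0,M}$ preserves $W_M$, hence $(w^0_M)^{-1}W_M w^0_M = w_0 W_M w_0 = W_{\delta(M)}$, since $\delta$ acts on $W$ by conjugation with $w_0$ and carries $W_M$ to $W_{\delta(M)}$. Thus $\phi(t_m)\in \bH_{\delta(M)}$ for $m\in W_M$, and clearly $\phi(\omega)\in S(V_\bC)\subset \bH_{\delta(M)}$.

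For the algebra relations I would check Definition \ref{d:graded}(i) and (ii). The $\bC[W_M]$-relations and the commutativity of $S(V_\bC)$ are immediate because $\phi$ on $W_M$ is group conjugation and on $V_\bC$ is linear. The only substantive point is the cross-relation for $\alpha\in\Pi_M$: set $\beta=(w^0_M)^{-1}(\alpha)$. Using $w_{0,M}(\alpha)=-\alpha^*$ for some $\alpha^*\in\Pi_M$ (the opposition involution on $\Pi_M$) and then applying $w_0$, one obtains $\beta=-w_0(\alpha^*)=\delta(\alpha^*)\in\Pi_{\delta(M)}$. Since the parameter function $k$ is $W$-invariant, $k_\beta=k_{\alpha^*}=k_\alpha$. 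Now pushing the identity $a t_{s_\alpha}=t_{s_\alpha}s_\alpha(a)+k_\alpha\Delta_\alpha(a)$ through $\phi$ is routine: the conjugation identity $(w^0_M)^{-1}s_\alpha = s_\beta (w^0_M)^{-1}$ gives $\phi(s_\alpha(a))=s_\beta(\phi(a))$, and dividing by $\alpha$ on the left corresponds to dividing by $\beta$ on the right, so $\phi(\Delta_\alpha(a))=\Delta_\beta(\phi(a))$, matching the defining relation in $\bH_{\delta(M)}$.

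Invertibility is automatic: since $\delta^2=\Id$ and $w^0_{\delta(M)} = w_0 w_{0,M} = (w^0_M)^{-1}$, the same construction applied to $\delta(M)$ furnishes a two-sided inverse. Finally, to see that $\phi$ intertwines $\bullet_M$ and $\bullet_{\delta(M)}$, I would check it on generators. On $t_m$, both $\bullet_M$ and $\bullet_{\delta(M)}$ send $t_w\mapsto t_{w^{-1}}$, and $\phi(t_m^{\bullet_M})=t_{(w^0_M)^{-1}m^{-1}w^0_M}=\phi(t_m)^{\bullet_{\delta(M)}}$. On $\omega\in V_\bC$, we have $\phi(\omega^{\bullet_M})=(w^0_M)^{-1}(\overline\omega)$ and $\phi(\omega)^{\bullet_{\delta(M)}}=\overline{(w^0_M)^{-1}(\omega)}$; these agree because $(w^0_M)^{-1}$ acts by a real linear map on $V_\bC$ and hence commutes with complex conjugation. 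The only place where anything could go wrong is the cross-relation check, specifically the identification $\beta\in\Pi_{\delta(M)}$ with $k_\beta=k_\alpha$, so that is the step I would write out carefully.
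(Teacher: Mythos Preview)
Your proof is correct and fills in the details that the paper omits: the paper's own proof of this lemma is simply the single word ``Straightforward.'' Your verification of the cross-relation via $\beta=(w^0_M)^{-1}(\alpha)\in\Pi_{\delta(M)}$ with $k_\beta=k_\alpha$, and of the inverse via $w^0_{\delta(M)}=(w^0_M)^{-1}$, is exactly the routine check the authors have in mind.
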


\begin{proof}Straightforward.
\end{proof}

\begin{definition}\label{d:transfer}
In light of Lemma \ref{l:2.6.1}, to each $\bH_M$-module $(\sig,U_\sig)$, we associate the $\bH_{\delta(M)}-$module
$(a\sig,U_{a\sig})$ given by 
\begin{equation}
U_{\Caa\sig}=U_\sig,\text{ and }(\Caa\sig)
(m'):=\sig(w^0_Mm'(w^0_M)^{-1}),\ 
m'\in \bH_{\delta(M)}.
\end{equation}
\end{definition}

\begin{proposition} 
The element 
$x\in W$ is minimal in  $xW_M$ if and only if  $xw^0_M$ is minimal in $xw^0_MW_{\delta(M)}.$
\end{proposition}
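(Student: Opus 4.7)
The plan is to prove the equivalence by using the standard Coxeter-theoretic characterisation of minimal coset representatives in terms of positive roots, and then to translate between the two sides via the explicit formulas $w^0_M = w_{0,M}w_0$ and $\Pi_{\delta(M)} = -w_0(\Pi_M)$ (which is the content of the map $\delta$ on roots).

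First I would recall the standard fact: an element $y \in W$ is the minimal length representative of the left coset $yW_L$ (for $L \subset \Pi$) if and only if $y(\alpha) > 0$ for all $\alpha \in R_L^+$. Applying this, the condition ``$x$ minimal in $xW_M$'' becomes $x(R_M^+) \subset R^+$, and the condition ``$xw^0_M$ minimal in $xw^0_M W_{\delta(M)}$'' becomes $xw^0_M(R_{\delta(M)}^+) \subset R^+$. So the whole problem reduces to showing that
\[
w^0_M(R_{\delta(M)}^+) = R_M^+.
\]

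Next I would verify this set equality directly. Since $\delta(\omega) = -w_0(\omega)$, we have $\Pi_{\delta(M)} = -w_0(\Pi_M)$ and hence $R_{\delta(M)}^+ = -w_0(R_M^+)$. Using $w^0_M = w_{0,M}w_0$, for $\beta \in R_{\delta(M)}^+$ written as $\beta = -w_0(\alpha)$ with $\alpha \in R_M^+$, I compute
\[
w^0_M(\beta) = w_{0,M}w_0(-w_0(\alpha)) = -w_{0,M}(\alpha).
\]
Since $-w_{0,M}$ permutes $R_M^+$ (being the longest element of $W_M$ acting on its own positive system), as $\alpha$ ranges over $R_M^+$ the element $-w_{0,M}(\alpha)$ ranges bijectively over $R_M^+$. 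This gives $w^0_M(R_{\delta(M)}^+) = R_M^+$, and combining with the first paragraph finishes the proof.

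No step is really an obstacle here; the only thing to be careful about is not confusing the two uses of ``longest element'' (the one for $W_M$ acting on $R_M$, which swaps $R_M^+$ and $R_M^-$, versus $w_0$ acting on the whole $R$). The identity $w^0_M = w_{0,M}w_0 = w_0 w_{0,\delta(M)}$ stated just before the proposition is exactly the compatibility that makes both viewpoints consistent, and one could equally well run the same root-theoretic computation with the other factorisation $w^0_M = w_0 w_{0,\delta(M)}$ as a sanity check.
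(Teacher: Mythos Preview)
Your proof is correct and follows essentially the same approach as the paper: both reduce the statement to the root-theoretic identity $w^0_M(R_{\delta(M)}^+)=R_M^+$ via the standard characterisation of minimal coset representatives. The only cosmetic difference is that the paper verifies this identity using the factorisation $w^0_M=w_0w_{0,\delta(M)}$ (so $w^0_M(R_{\delta(M)}^+)=w_0(R_{\delta(M)}^-)=R_M^+$), whereas you use $w^0_M=w_{0,M}w_0$; as you yourself note, these are interchangeable.
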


\begin{proof} We observe that $w_0(R^\pm_{\delta(M)})=R^\mp_M.$ Then 
\[
w_0w_{0,\delta(M)}(R^+_{\delta(M)})=w_0(R^-_{\delta(M)})=R^+_M.
\]
The claim follows,
\[
x(R^+_M)\subset R^+\text{ if and only if } xw^0_M(R_{\delta(M)})\subset R^+.
\]   
\end{proof}
\begin{corollary}\label{c:M-aM} In the notation of (\ref{e:z-decomp}):
\[
c_{\delta(M)}(xw^0_M)=c_M(x)w^0_M,\qquad m_{\delta(M)}(xw^0_M)=(w^0_M)^{-1}m_M(x)w^0_M,
\]
for every $x\in W.$
\end{corollary}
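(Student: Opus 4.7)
The plan is to obtain the decomposition for $xw^0_M$ by inserting $w^0_M(w^0_M)^{-1}$ into the given decomposition $x = c_M(x)\,m_M(x)$ and then invoking the preceding proposition together with the uniqueness of minimal coset representatives. Explicitly, I would write
\[
xw^0_M \;=\; \bigl(c_M(x)\, w^0_M\bigr)\cdot\bigl((w^0_M)^{-1}\, m_M(x)\, w^0_M\bigr),
\]
and aim to show that the right-hand factor lies in $W_{\delta(M)}$ while the left-hand factor is the minimal-length coset representative in $c_M(x)w^0_M W_{\delta(M)}$. Once both are established, the uniqueness of the decomposition in $W/W_{\delta(M)}$ forces the claimed formulas.

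For the first point, I need the conjugation identity $(w^0_M)^{-1}\, W_M\, w^0_M = W_{\delta(M)}$. This follows by writing $w^0_M = w_{0,M}w_0$ so that $(w^0_M)^{-1}W_M w^0_M = w_0\, w_{0,M}\, W_M\, w_{0,M}\, w_0 = w_0\, W_M\, w_0$, and then using that the automorphism $\delta$ acts on $V$ as $-w_0$ and permutes $\Pi$, sending $\Pi_M$ to $\Pi_{\delta(M)}$; hence conjugation by $w_0$ takes the simple reflections generating $W_M$ to those generating $W_{\delta(M)}$, giving $w_0 W_M w_0 = W_{\delta(M)}$. Therefore $(w^0_M)^{-1}m_M(x)w^0_M\in W_{\delta(M)}$, as required.

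For the second point, the element $c_M(x)$ is by definition minimal in $c_M(x)W_M$, so the Proposition just proved asserts that $c_M(x)\,w^0_M$ is minimal in $c_M(x)w^0_M\,W_{\delta(M)}$. Combined with the previous paragraph, this shows that the displayed factorization is precisely the minimal coset decomposition of $xw^0_M$ with respect to $W_{\delta(M)}$. By uniqueness of that decomposition we read off
\[
c_{\delta(M)}(xw^0_M) = c_M(x)\,w^0_M, \qquad m_{\delta(M)}(xw^0_M) = (w^0_M)^{-1} m_M(x)\, w^0_M.
\]

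The only real content is the conjugation identity $w_0 W_M w_0 = W_{\delta(M)}$, which could potentially be delicate if one wanted to track lengths carefully; but since only the set-level identity is needed (the length control is already packaged inside the preceding Proposition), this step reduces to the elementary observation that $\delta = -w_0$ permutes $\Pi$ and sends $\Pi_M$ to $\Pi_{\delta(M)}$. All remaining manipulations are formal.
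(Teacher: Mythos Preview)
Your proof is correct and follows exactly the route the paper intends: the Corollary is stated immediately after the Proposition without a separate argument, precisely because the factorization $xw^0_M=(c_M(x)w^0_M)\cdot((w^0_M)^{-1}m_M(x)w^0_M)$ together with the Proposition and uniqueness of minimal coset representatives yields the claim. The conjugation identity $(w^0_M)^{-1}W_Mw^0_M=W_{\delta(M)}$ that you verify is also implicit in Lemma~\ref{l:2.6.1}.
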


\begin{theorem}\label{t:herm-dual}
  The map
\[
\Phi(t^h_x\otimes v^h):=t_{xw^0_M}\otimes \Caa v^h
\]
is an $\bH-$equivariant isomorphism between 
$\left(\pi^\bullet_\sigma,X(M,\sig)^h\right)$ and
$\left(\pi_\sigma,X(\delta(M),\Caa\sig^h)\right)$ where the action on $\Ca\sig^h$ is given
by $\bullet_{\delta(M)}.$ 
\end{theorem}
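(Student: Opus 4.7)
The plan is to verify that $\Phi$ is a $\bC$-linear bijection which intertwines the two $\bH$-actions. Since $\bH$ is generated by $\{t_z:z\in W\}$ together with $V_\bC$, it suffices to check equivariance on these two classes of generators, using the explicit formulas already established: Lemma~\ref{l:action-dual-induced} on the source side, and Lemma~\ref{l:action-induced} (with $(M,\sigma)$ replaced by $(\delta(M),\Caa\sigma^h)$) on the target side. The bridge between the two sides is Corollary~\ref{c:M-aM}, which re-expresses parabolic decompositions after shifting by $w^0_M$, together with Definition~\ref{d:transfer}, which translates $(\Caa\sigma^h)$-actions into $\sigma^\bullet$-actions.

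Bijectivity is immediate: Corollary~\ref{c:M-aM} provides the bijection $x\mapsto xw^0_M$ from $\C J_M$ to $\C J_{\delta(M)}$, and $\Caa$ is the identity on the underlying vector space. For the $t_z$-equivariance, Lemma~\ref{l:action-dual-induced} gives $\pi^\bullet_\sigma(t_z)(t^h_x\otimes v^h)=t^h_{c_M(zx)}\otimes\sigma^\bullet(m_M(zx))v^h$; pushing through $\Phi$ and using $c_{\delta(M)}(zxw^0_M)=c_M(zx)w^0_M$ from Corollary~\ref{c:M-aM}, the result equals $\pi_{\Caa\sigma^h}(t_z)(t_{xw^0_M}\otimes v^h)$ once we note that $(\Caa\sigma^h)(m_{\delta(M)}(zxw^0_M))=\sigma^\bullet(m_M(zx))$, which is Definition~\ref{d:transfer} applied to $m_{\delta(M)}(zxw^0_M)=(w^0_M)^{-1}m_M(zx)w^0_M$.

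For $\omega\in V_\bC$, the diagonal terms match by the transfer formula $(\Caa\sigma^h)((xw^0_M)^{-1}\omega)=\sigma^\bullet(x^{-1}\omega)$, since $(xw^0_M)^{-1}\omega=(w^0_M)^{-1}x^{-1}\omega$. The off-diagonal sums require the root-system identity
\[
(w^0_M)^{-1}(R^+\setminus R^+_M)=R^-\setminus R^-_{\delta(M)},
\]
which I would derive from the factorization $w^0_M=w_{0,M}w_0$, the fact that $W_M$ permutes $R^+\setminus R^+_M$ within itself, and $w_0(R^+_M)=R^-_{\delta(M)}$ (shown in the proposition preceding Corollary~\ref{c:M-aM}). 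This identity makes the two summation ranges $\{\beta>0:x^{-1}\beta\in R^+\setminus R^+_M\}$ and $\{\gamma>0:(xw^0_M)^{-1}\gamma<0\}$ coincide; the individual summands are then identified by the same Corollary~\ref{c:M-aM} / Definition~\ref{d:transfer} combination as in the $t_z$ step. The main technical point is this $\omega$-case: one must simultaneously reindex the off-diagonal sums via the above root-system identity and reconcile the signs arising from the hermitian dual action against those from the induced action.
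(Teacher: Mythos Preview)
Your proposal is correct and follows essentially the same route as the paper's own proof: both check $\bH$-equivariance on the generators $t_z$ and $\omega\in V_\bC$ by applying Lemma~\ref{l:action-dual-induced} on the source, Lemma~\ref{l:action-induced} (for $(\delta(M),\Caa\sigma^h)$) on the target, and then matching the two via Corollary~\ref{c:M-aM} together with the root-system identity $w^0_M(R^+\setminus R^+_M)=R^-\setminus R^-_{\delta(M)}$ (your formulation with $(w^0_M)^{-1}$ is equivalent and in fact the cleaner direction for the reindexing you describe). Your explicit remarks on bijectivity and on how Definition~\ref{d:transfer} converts $(\Caa\sigma^h)(m_{\delta(M)}(\cdot))$ into $\sigma^\bullet(m_M(\cdot))$ are exactly the ingredients the paper invokes implicitly in its final sentence.
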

\begin{proof}
 Using Lemma \ref{l:action-dual-induced}, we have
\[
\begin{aligned}
&\Phi[\pi^\bullet(t_z)(t_x^h\otimes v^h)]=\Phi[t^h_{c_M(zx)}\otimes
\sig^\bullet(m(zx))v^h]=t_{c_M(zx)w^0}\otimes
\Caa[\sig^\bullet(m_M(zx))v^h],\\
&\pi(t_z)\Phi[t_x^h\otimes v^h]=\pi(t_z)[t_{xw^0}\otimes \Caa v^h]=
t_{c_{\delta(M)}(zxw^0)}\otimes (\Caa\sig)^\bullet
[(m_{\delta(M)}(zxw^0))\Caa v^h].  
\end{aligned}
\]
Next
\[
\begin{aligned}
\Phi[\pi^\bullet(\omega)(t_x^h\otimes v^h)]&=
t_{xw^0_M}\otimes\ \Caa[\sig^\bullet(x^{-1}\omega)v^h]
{- }\\
{-}&\sum_{\sbst{\beta>0}{x^{-1}\beta\in R^+\setminus R^+_M}}(\omega,\beta^\vee)
t_{c_M(s_\beta x)w^0}\otimes \Caa [\sig^\bullet(m_M(s_\beta x))v^h]\\
\pi(\omega)\Phi[t_x^h\otimes v^h]&=\pi(\omega)[t_{xw^0}\otimes \Caa v^h]=
t_{xw^0_M}\otimes (\Caa\sig)^\bullet((xw^0_M)^{-1}\omega)\Caa v^h+\\
+&
\sum_{\sbst{\gamma>0}{(xw^0)^{-1}\gamma\in R^+\setminus R^+_{\delta(M)}}}
(\omega,\gamma^\vee)~
t_{c_{\delta(M)}(s_\beta xw^0_M)}\otimes (\Caa
\sig)^\bullet(m_{\delta(M)}(s_\beta xw^0_M))\Caa v^h.
\end{aligned}
\]  
The corresponding expressions are equal because of Corollary
  \ref{c:M-aM}, and the fact that $w^0(R^+\backslash R^+_M)=R^-\backslash R^-_{\delta(M)}$.
\end{proof}

\begin{example}\label{e:ps} A particular case of Theorem \ref{t:herm-dual} is that
  of minimal principal series. The hermitian dual $(\pi^\bullet, X(\nu)^h)$ of a minimal principal series
  module identifies with $(\pi,X(w_0\overline\nu))$ via
\[
\Phi(t_x^h\otimes \one_\nu)=t_{xw_0}\otimes \one_{w_0\overline\nu}.
\]
{In particular, this means that $X(\nu)$ admits an invariant $\bullet$
  form if and only if $w_0\overline\nu$ is $W$-conjugate to $\nu,$
  equivalently if $\overline\nu$ is $W$-conjugate to $\nu.$
  Thus, for example, if $w_0$ is not central in $W$,
  $X(\nu)$ does not admit a $\bullet$-form for generic purely
  imaginary values of $\nu.$
}
\end{example}

\subsection{Second form of Frobenius reciprocity} As an application of
Theorem \ref{t:herm-dual}, we obtain the following lemma, which 
is the $\bH$-analogue of the second form of Frobenius reciprocity.

\begin{lemma} If $\bH_M$ is a parabolic subalgebra of $\bH$, $V$ an $\bH$-module and $U$ an $\bH_M$-module, then
\begin{equation}\label{e:Frob2}
\Hom_{\bH_M}[V|_{\bH_M},U]=\Hom_\bH[V,\bH\otimes_{\bH_{\delta(M)}}\Caa(U)].
\end{equation}
\end{lemma}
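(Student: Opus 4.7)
The plan is to deduce this from Theorem~\ref{t:herm-dual} by combining three formal ingredients: the first form of Frobenius reciprocity, the involutivity $(X^h)^h\cong X$ of the $\bullet$-hermitian dual on finite-dimensional modules, and the fact that taking hermitian adjoints gives a conjugate-linear bijection of Hom-spaces. Throughout I would take $V$ and $U$ to be finite-dimensional, which is the setting of the paper.

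The concrete chain of natural isomorphisms I would build is
\[
\Hom_{\bH_M}[V|_{\bH_M}, U]\ \cong\ \Hom_{\bH_M}[U^h, V^h|_{\bH_M}]\ \cong\ \Hom_\bH[X(M,U^h), V^h]\ \cong\ \Hom_\bH[V, X(M,U^h)^h],
\]
where the outer two isomorphisms send a morphism to its $\bullet$-hermitian adjoint (hence are conjugate-linear), and the middle one is the standard adjunction for extension of scalars. This adjunction is available because $\bH$ is free as a right $\bH_M$-module on $\{t_x:x\in\C J_M\}$ (\emph{cf.}\ Lemma~\ref{l:action-induced}), so no finiteness assumption is needed for that step. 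I would then invoke Theorem~\ref{t:herm-dual} with $\sigma=U^h$ to identify
\[
X(M,U^h)^h\ \cong\ X(\delta(M),\Caa(U^h)^h)\ \cong\ \bH\otimes_{\bH_{\delta(M)}}\Caa(U),
\]
where the second step uses $(U^h)^h\cong U$ canonically. The composition of the two conjugate-linear and the two complex-linear bijections is complex-linear, yielding the claimed identification.

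The substantive input is entirely Theorem~\ref{t:herm-dual}; everything else is formal. The main point to watch is the bookkeeping: I must apply the two hermitian-adjoint operations consistently so that the conjugate-linear twists cancel and the resulting map is genuinely natural in $V$ and $U$. Since each of the four steps in the chain is manifestly functorial in both arguments, naturality passes to the composite without further work.
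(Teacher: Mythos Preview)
Your argument is correct and follows the same skeleton as the paper's proof: apply the tautological duality bijection on Hom-spaces, first Frobenius reciprocity, the duality bijection again, and then Theorem~\ref{t:herm-dual}. The only difference is that the paper switches to contragredients (using $\bullet$ as a linear anti-involution) so that all four bijections are $\bC$-linear, whereas you work with hermitian duals throughout and track that the two outer conjugate-linear bijections compose to something $\bC$-linear; this is a purely cosmetic variation, not a different route.
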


\begin{proof}
Theorem \ref{t:herm-dual} computed the hermitian dual of a
parabolically induced module. The same exact statement and proof hold
of course for contragredient modules. We use here the same notation
$V^\bullet$ to denote the contragredient (rather than the hermitian
dual) with respect to the involution $\bullet$. We will also use twice the tautological isomorphism
\begin{equation}\label{e:tautology}
\Hom[A,B^\bullet]=\Hom[B,A^\bullet].
\end{equation}
We have:
\begin{equation}
\begin{aligned}
\Hom_{\bH_M}[V,U]&=\Hom_{\bH_M}[V,(U^{\bullet})^\bullet]=\Hom_{\bH_M}[U^\bullet,V^\bullet]\\
&=\Hom_{\bH}[\bH\otimes_{\bH_M}U^\bullet,V^\bullet] \quad\text{(by first Frobenius reciprocity)}\\
&=\Hom_{\bH}[V,(\bH\otimes_{\bH_M}U^\bullet)^\bullet]\\
&=\Hom_{\bH}[V,\bH\otimes_{\bH_{\delta(M)}}\Caa(U)]\quad\text{(by Theorem \ref{t:herm-dual})}.
\end{aligned}
\end{equation}

\end{proof}

\subsection{Sesquilinear Form} A $\bullet$-invariant sesquilinear form on 
$X(M,\sig)$ is equivalent to defining an $\bH$-equivariant  map
\begin{equation}
\C I:(\pi,X(M,\sig))\longrightarrow (\pi^\bullet,X(M,\sig)^h).
\end{equation}
We call $\C I$ hermitian if $\C I^h=\C I$ or equivalently $\C
I(v)(w)=\ovl{\C I(w)(v)}$, for all $v,w\in X(M,\sig).$  
Recall $M, \delta(M),$ and $w_0=w^0_Mw_{0,\delta(M)}=w_{0,M}w^0_M$ with $w^0_M$
minimal in $w_0W_\wti M.$ To simplify notation, write $\wti M=\delta(M),$ and 
\begin{equation}
w^0=w^0_M,\ R^0:=R_{w^0_M}.
\end{equation}
 Furthermore,
\[
\Ad w^0_M:W_{\wti M}\longrightarrow W_M,\qquad \Ca\sig(\wti m)=\sig(w^0_M
\wti m(w^0)^{-1}).
\]
If $x=c_M(x)m_M(x),$ then $xw^0_M=c_M(x)w^0_M (w^0_M)^{-1}m_M(xw^0_M),$
so 
\begin{equation}
c_M(x)w^0_M=c_{\wti M}(xw^0_M),\quad (w^0_M)^{-1}m_M(x)w^0_M=m_{\wti M}(xw^0_M).
\end{equation}

Assume that there is an $\bH_M$-equivariant isomorphim
$$\iota:(\sig,U_\sig)\longrightarrow (\sig^\bullet,U_\sig^h)$$ defining
a $\bullet$-invariant hermitian form on $(\sig,U_\sig)$.  The same
map gives an isomorphism $\iota_{\Caa}:(\Caa\sig,U_\sig)\longrightarrow
(\Caa\sig^\bullet, U_\sig^h).$ 

Write $R^0=\sum t_z m^0_{\wti z}$ with $\wti z$ minimal in $\wti zW(\wti M)$ and $m^0_{\wti z}\in \bH_{\wti M}.$

Define $\C I$ to be the composition of the maps
\[
X(M,\sig)\xrightarrow{L_{R^0}}
  X(\wti M,\Caa\sig)\xrightarrow{\Ind\iota_{\Caa}}
    X(\wti M,\Caa\sig^h)\xrightarrow{\Phi^{-1}} X(M,\sig)^h.
\]
where:
\begin{enumerate}
\item[(i)]\[
L_{R^0}:t_x\otimes v\mapsto t_xR^0\otimes v=\sum
t_{c_{\wti M}(x\wti z)}\otimes a\sig(m_{\wti M}(x\wti z)m^0_{\wti z})v
\]
\item[(ii)] $\Ind\iota_{\Caa}$ maps it to
\[
\Ind\iota_{\Caa}\circ L_{R^0}:t_x\otimes v\mapsto \sum
t_{c_{\wti M}(x\wti z)}\otimes
\iota_{\Caa} [\Caa\sig(m_{\wti M}(x\wti z)m^0_{\wti z})v].
\]
\item[(ii)]Applying $\Phi^{-1}$ we get
\[
\begin{aligned}
\C I:\ &t_x\otimes v\mapsto \sum t_{c_{\wti M}(x\wti z)(w^0)^{-1}}^h\otimes 
\iota_a [a\sig[m_{\wti M}(x\wti z)m^0_{\wti z}v]=\\
=&\sum t_{c_{M}(x\wti z(w^0)^{-1})}^h\otimes \iota_{\Caa} [\Caa\sig(m_{\wti
  M}(x\wti z)m^0_{\wti z})v].
\end{aligned}
\]
\end{enumerate}
Thus
\[
\langle t_x\otimes v_x,t_y\otimes v_y\rangle=
\langle \Caa\sig(m_{\wti M}(x\wti z)m^0_{\wti z})v_x,v_y\rangle
\]
with $c_M(x\wti z(w^0)^{-1})=y.$ This equation gives 
\[
\begin{aligned}
x\wti z(w^0)^{-1}=&y\cdot m_M(x\wti z(w^0)^{-1})\LR
\wti z=x^{-1}ym_M(x\wti z (w^0)^{-1})w^0\LR\wti z=x^{-1}yw^0m_{\wti  M}
(x\wti z)\LR\\ \LR& x^{-1}yw^0=\wti zm_{\wti M}(x\wti z)^{-1}
\LR\wti z=c_{\wti M}(x^{-1}yw^0),\ m_{\wti M}(x\wti z)=m_{\wti M}(x^{-1}yw^0)^{-1}.
\end{aligned}
\]
The final answer is
\[
\langle t_x\otimes v_x,t_y\otimes v_y\rangle =
\langle \Caa\sig [m_{\wti M}(x^{-1}yw^0)^{-1}
m^0_{c_{\wti M}(x^{-1}yw^0)}]v_x,v_y\rangle. 
\]

Compare and contrast this with
\[
\ep_{\wti M}(t_{(w^0)^{-1}}t_{y^{-1}}t_x R^0)=\ep_{\wti M}(\sum
t_{(w^0)^{-1}}t_{y^{-1}}t_xt_{\wti z}m^0_{\wti z} )=\wti m\cdot
m^0_{\wti z}
\]
where $(w^0)^{-1}y^{-1}x\wti z=\wti m$. So 
\[
x^{-1}yw^0=\wti z{\wti m}^{-1}\LR \wti
z=c_{\wti M}(x^{-1}yw^0),\ \wti m=m_{\wti M}(x^{-1}yw^0)^{-1}.
\]

In conclusion, we have proved the following result.

\begin{proposition}\label{p:induced-bulletform} Suppose $(\sig,U_\sig)$ has a $\bullet$-invariant
  hermitian form $\langle~,~\rangle_{\sigma,\bullet}.$ The form 
\[
\langle h_1\otimes v_1,h_2\otimes v_2\rangle_\bullet=\langle \Caa\sig[\ep_{\wti
  M}(t_{(w^0)^{-1}} h_2^\bullet h_1 R^0)]v_1,v_2\rangle_{\sigma,\bullet}
\]
on $X(M,\sigma)$ is $\bullet$-invariant and sesquiliniar.
\end{proposition}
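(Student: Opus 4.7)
The plan is to realize the stated form as coming from an $\bH$-equivariant map $\C I: X(M,\sig)\to X(M,\sig)^h$, built as a composition of three equivariant maps, and then to use the projection $\ep_{\wti M}$ to rewrite the resulting matrix coefficients in the compact form of the statement.

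First, I define $L_{R^0}: X(M,\sig)\to X(\wti M,\Caa\sig)$ by $h\otimes v\mapsto hR^0\otimes v$. Equivariance under left multiplication by $\bH$ is automatic, but well-definedness modulo the tensor relations over $\bH_M$ reduces to the intertwining identity $hR^0=R^0\phi(h)$ for $h\in\bH_M$, where $\phi:\bH_M\to\bH_{\wti M}$ is the conjugation isomorphism of Lemma \ref{l:2.6.1}. On $\omega\in V_\bC$ this is precisely (\ref{e:aR}), and on $t_{s_\al}$ with $\al\in\Pi_M$ it follows from repeated use of the braid relation $R_xR_y=R_{xy}$ together with the fact that $w^0_M$ carries $\Pi_M$ to $\Pi_{\wti M}$.

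Second, the hypothesis that $\sig$ is $\bullet_M$-hermitian provides an $\bH_M$-equivariant isomorphism $\iota:(\sig,U_\sig)\to(\sig^\bullet,U_\sig^h)$. By Lemma \ref{l:2.6.1}, the same underlying linear map is $\bH_{\wti M}$-equivariant as $\iota_{\Caa}:(\Caa\sig,U_\sig)\to(\Caa\sig^\bullet,U_\sig^h)$, and inducing to $\bH$ yields an equivariant map $\Ind\iota_{\Caa}: X(\wti M,\Caa\sig)\to X(\wti M,\Caa\sig^h)$. Theorem \ref{t:herm-dual} supplies an equivariant isomorphism $\Phi: X(M,\sig)^h\to X(\wti M,\Caa\sig^h)$ whose inverse completes the chain, and the composition $\C I=\Phi^{-1}\circ\Ind\iota_{\Caa}\circ L_{R^0}$ is an equivariant map $X(M,\sig)\to X(M,\sig)^h$, which is tautologically the same data as a $\bullet$-invariant sesquilinear form.

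Third, to obtain the explicit formula I track $\C I(t_x\otimes v_1)(t_y\otimes v_2)$. Expanding $R^0=\sum_{\wti z}t_{\wti z}m^0_{\wti z}$, composing with $\iota_{\Caa}$ and then $\Phi^{-1}$, and pairing against $t_y\otimes v_2$ in the hermitian dual, only those $\wti z$ survive for which $c_M(x\wti z(w^0)^{-1})=y$; a short calculation with the decomposition $z=c(z)m(z)$ pins down the unique such $\wti z$ to be $c_{\wti M}(x^{-1}yw^0)$, with accompanying Weyl-group element $m_{\wti M}(x^{-1}yw^0)^{-1}$. Applying $\ep_{\wti M}$ to $t_{(w^0)^{-1}}t_{y^{-1}}t_xR^0$ yields exactly the same element of $\bH_{\wti M}$, so the matrix coefficient coincides with the formula in the statement. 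The main obstacle is the first step: the intertwining identity for $R^0$ with respect to $\bH_M$ mixes group-algebra and polynomial generators, and is where all the combinatorics of the $R$-elements relative to the Levi subalgebra must be packaged correctly. Once this identity is in hand, the remaining steps are applications of Lemma \ref{l:2.6.1}, Theorem \ref{t:herm-dual}, and routine bookkeeping with the minimal-coset decomposition; sesquilinearity is automatic from the origin of the form as a matrix coefficient of a hermitian pairing on $U_\sig$.
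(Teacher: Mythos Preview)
Your proposal is correct and follows essentially the same route as the paper's own argument: the paper defines $\C I$ as the composite $\Phi^{-1}\circ\Ind\iota_{\Caa}\circ L_{R^0}$, tracks $\C I(t_x\otimes v)(t_y\otimes v')$ through the minimal-coset combinatorics to isolate the unique $\wti z=c_{\wti M}(x^{-1}yw^0)$, and then matches the result with $\ep_{\wti M}(t_{(w^0)^{-1}}t_{y^{-1}}t_xR^0)$. One small point: your justification of the intertwining identity $t_{s_\al}R^0=R^0\phi(t_{s_\al})$ for $\al\in\Pi_M$ needs not only $R_xR_y=R_{xy}$ but also (\ref{e:aR}) to move the rational-function part of $t_{s_\al}$ (written via $R_{s_\al}$) across $R^0$; the paper itself does not spell out this well-definedness step at all, so your treatment here is in fact more careful than the original.
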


We prove in the next section that the form is also hermitian.

\subsection{Symmetry}\label{sec:symmetry}
The parabolic Hecke subalgebra $\bH_M$ of $\bH$ is attached
to the non-semisimple root system $(V,R_M,V^\vee,R_M^\vee).$ Let $V_M$
be the $\bR$-span of $R_M$ in $V$, $V_M^\vee$ the $\bR$-span of
$R_M^\vee$ in $V^\vee,$ and 
\begin{align*}
V_M^\perp&=\{v\in V: (v,\al^\vee)=0,\text{ for all }\al\in R_M\},\\
 V_M^{\vee,\perp}&=\{v^\vee\in V^\vee: (\al,v^\vee)=0,\text{ for all }\al\in R_M\}.
\end{align*}
Then $V=V_M\oplus V_M^\perp$, $V^\vee=V_M^\vee\oplus
V_M^{\vee,\perp}.$ Let $\bH_M^0$ denote the graded Hecke algebra
attached to the semisimple root system $(V_M,R_M,V^\vee_M,R_M^\vee)$
by Definition \ref{d:graded}. Then there is an algebra isomorphism
\begin{equation}
\bH_M=\bH_M^0\otimes_\bC S(V_M^\perp).
\end{equation}

Assume $\sig=\sig_0\otimes\bC_\nu$, where $\sigma_0$ is an
$\bH_M^0$-module, and 
$\nu\in (V^{\perp,\vee}_M)_\bC$. 
\begin{lemma}
  \label{l:basis}
There is a set $V^{\perp,\vee}_{M,\reg}$ containing an open
set of $V_M^{\perp,\vee}$ such that $\{\C R_x\otimes v_i\}$ with $x$ minimal in
the coset $xW(M)$ and $\{v_i\}$ a basis of $U_\sig$ forms a basis of $X(M,\sig).$
\end{lemma}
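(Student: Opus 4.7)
The plan is to show $\{R_x\otimes v_i\}$ is a basis of $X(M,\sigma)$ by exhibiting it as the image of the known PBW basis $\{t_x\otimes v_i\}$ under an upper-triangular change of basis. The latter is a basis because Definition \ref{d:graded}(i) makes $\bH$ a free right $\bH_M$-module on $\{t_x : x\in\C J_M\}$, so tensoring with $U_\sigma$ over $\bH_M$ gives the stated basis of the induced module.

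The key structural input is the triangular expansion $R_x = \sum_{y\le x} t_y a_y$ with leading coefficient $a_x = \prod_{x^{-1}\alpha<0}\frac{\alpha}{k_\alpha - \alpha}$. Because $x\in\C J_M$, the inversion set $\{\alpha>0 : x^{-1}\alpha<0\}$ lies in $R^+\setminus R_M^+$; moreover, an induction along a reduced expression for $x$, using the definition of $R_{s_\alpha}$ and the commutation rule $a\cdot R_w = R_w\cdot w^{-1}(a)$, shows that every $a_y$ with $y\le x$ is rational with poles only along the hyperplanes $\{k_\beta = \beta\}$ for $\beta\in R^+\setminus R_M^+$.

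To interpret $R_x\cdot(1\otimes v_i)$ inside $X(M,\sigma)$, one needs each $\sigma(k_\beta - \beta)$ with $\beta\in R^+\setminus R_M^+$ to act invertibly on $U_\sigma$. Since $\sigma = \sigma_0\otimes\bC_\nu$, the generalized eigenvalues of $\beta$ on $U_\sigma$ take the form $(\beta,\lambda)+(\beta,\nu)$, with $\lambda$ running over the finitely many $S(V_M)$-weights of $\sigma_0$. I therefore define $V_{M,\reg}^{\perp,\vee}$ as the complement in $V_M^{\perp,\vee}$ of the finite family of affine hyperplanes
\[
(\beta,\nu) = k_\beta - (\beta,\lambda), \qquad \beta\in R^+\setminus R_M^+,\ \lambda\in\Omega(\sigma_0);
\]
this is Zariski-open and nonempty, hence contains a Euclidean open subset of $V_M^{\perp,\vee}$.

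For such $\nu$, the identity
\[
R_x\cdot(1\otimes v_i) = t_x\otimes\sigma(a_x)v_i + \sum_{y<x} t_y\otimes\sigma(a_y)v_i
\]
makes sense in $X(M,\sigma)$, and $\sigma(a_x)$ is invertible on $U_\sigma$ as a product of invertible operators. After choosing any linear refinement of the Bruhat order on $\C J_M$, the transition matrix from the PBW basis to $\{R_x\otimes v_i\}$ is block upper-triangular with invertible diagonal blocks $\sigma(a_x)$, so $\{R_x\otimes v_i\}$ is a basis. The main obstacle is the denominator-tracking in the second paragraph, which has to be done with enough care to confirm that only hyperplanes indexed by $\beta\in R^+\setminus R_M^+$ appear, so that a genuinely open set of parameters $\nu\in V_M^{\perp,\vee}$ is left available; once that is secured, the rest is linear algebra.
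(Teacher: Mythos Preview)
Your approach matches the paper's: expand $R_x$ upper-triangularly in the $t_y$ and show that the diagonal block $\sigma(a_x)$ is invertible for generic $\nu$.

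There is, however, a slip in the leading-term formula (which the paper's text also commits). The correct expression is
\[
a_x=\prod_{\alpha>0,\ x\alpha<0}\frac{\alpha}{k_\alpha-\alpha},
\]
not $\prod_{x^{-1}\alpha<0}$. Indeed, pulling the scalar denominators to the right using $a\,R_w=R_w\,w^{-1}(a)$ gives
\[
R_x=\Bigl[\prod_{j=1}^{k}(t_{s_{\alpha_j}}\alpha_j-k_{\alpha_j})\Bigr]\cdot\prod_{j=1}^{k}(k_{\gamma_j}-\gamma_j)^{-1},
\qquad \gamma_j=s_{\alpha_k}\cdots s_{\alpha_{j+1}}(\alpha_j),
\]
and the $\gamma_j$ enumerate exactly $\{\alpha>0:x\alpha<0\}$. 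With the stated condition $x^{-1}\alpha<0$ your assertion that the relevant roots lie in $R^+\setminus R_M^+$ is false: in type $A_2$ with $\Pi_M=\{\alpha_1\}$, the element $x=s_1s_2$ is minimal in $xW(M)$, yet $x^{-1}\alpha_1=-\alpha_1-\alpha_2<0$, so $\alpha_1\in R_M^+$ would appear. With the correct condition $x\alpha<0$, minimality of $x$ in $xW(M)$ gives $x(R_M^+)\subset R^+$, so the set does lie in $R^+\setminus R_M^+$ as you want. The factorization above also shows that the common denominator of every $a_y$, $y\le x$, divides $\prod_{\alpha>0,\,x\alpha<0}(k_\alpha-\alpha)$, so your pole-tracking claim holds for all lower terms as well. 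Once this indexing is corrected, the rest of your argument goes through unchanged.
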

\begin{proof}
  This follows from the formula 
\[
\C R_x=t_x\prod_{x^{-1}\al
    <0}\frac{\al}{k_\al +\al}+ \sum_{y<x}t_y m_y^x.
\]
The leading term for $R^0,$ 
$\displaystyle{\sig\left(\prod_{(w_M^0)^{-1}\al <0}\frac{\al}{k_\al +\al} \right)}$, is
invertible for generic $\nu.$ The claim follows from the fact that the
expression of $\C R_x$ is {upper triangular} in the $t_y.$
\end{proof}
\begin{theorem}\label{t:induced-bulletform}
The form in Proposition \ref{p:induced-bulletform} is hermitian, and
therefore, it gives a $\bullet$-invariant hermitian form on the
induced module $X(M,\sigma)$.
\end{theorem}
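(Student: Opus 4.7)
The plan is to verify hermiticity on the standard basis $\{t_x\otimes v_i : x\in \C J_M,\, v_i\in \C B(U_\sigma)\}$, using the explicit matrix-coefficient formula
\[
\langle t_x\otimes v_1,\,t_y\otimes v_2\rangle_\bullet=\bigl\langle\Caa\sigma\bigl[m_{\wti M}(x^{-1}yw^0)^{-1}\,m^0_{c_{\wti M}(x^{-1}yw^0)}\bigr]v_1,\,v_2\bigr\rangle_{\sigma,\bullet}
\]
derived in the discussion preceding Proposition~\ref{p:induced-bulletform}. By Lemma~\ref{l:2.6.1}, $\iota_\Caa$ makes $\Caa\sigma$ a $\bullet_{\wti M}$-hermitian $\bH_{\wti M}$-module, so hermiticity of the induced form reduces to the algebraic identity in $\bH_{\wti M}$
\[
m_{\wti M}(x^{-1}yw^0)^{-1}\,m^0_{c_{\wti M}(x^{-1}yw^0)}=\bigl[m_{\wti M}(y^{-1}xw^0)^{-1}\,m^0_{c_{\wti M}(y^{-1}xw^0)}\bigr]^{\bullet_{\wti M}},\qquad x,y\in\C J_M.
\]

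The cleanest route to this identity is structural: the form of Proposition~\ref{p:induced-bulletform} is manifestly the composition $\Phi^{-1}\circ\Ind\iota_\Caa\circ L_{R^0}$, so I would prove hermiticity factor by factor. The map $\Ind\iota_\Caa$ is hermitian because $\iota_\Caa$ is; $\Phi^{-1}$ is the intertwiner of Theorem~\ref{t:herm-dual}, constructed precisely to identify $X(M,\sigma)^h$ with $X(\wti M,\Caa\sigma^h)$; and the compatibility of $L_{R^0}$ with these two maps follows from the intertwining properties of $R^0$ collected in Lemma~\ref{l:starbullet}, together with the isomorphism of Lemma~\ref{l:2.6.1} interchanging $\bullet_M$ with $\bullet_{\wti M}$. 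A more computational alternative is to reformulate hermiticity as the global identity
\[
\ep_{\wti M}\bigl(t_{(w^0)^{-1}}\,h_2^\bullet h_1\,R^0\bigr)=\ep_{\wti M}\bigl(t_{(w^0)^{-1}}\,h_1^\bullet h_2\,R^0\bigr)^{\bullet_{\wti M}},
\]
apply $\bullet$ to the expression on the right, use Lemma~\ref{l:starbullet}(1) to rewrite $(R^0)^\bullet$ as $(-1)^{\ell(w^0)}R_{(w^0)^{-1}}\cdot c$ with $c=\prod_{(w^0)^{-1}\al<0}\frac{k_\al-\al}{k_\al+\al}$, push $c$ past $R_{(w^0)^{-1}}$ using \eqref{e:aR}, and exchange $t_{w^0}$ with $R^0$ via \eqref{e:tR} or its parabolic analogue; the error terms land outside the identity coset in $\C J_{\wti M}$ and are killed by $\ep_{\wti M}$. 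The outer $\bullet$ is then converted to $\bullet_{\wti M}$ on the $\ep_{\wti M}$-image via the $\bullet$-analogue of Proposition~\ref{p:stareps}, deduced from that proposition, Corollary~\ref{c:buleps}, and the identity $\delta(M)=\wti M$.

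The main obstacle I anticipate is the combinatorial bookkeeping of signs, lengths, and rational factors when pursuing the computational route---in particular, verifying that the accumulated scalar $c\cdot(-1)^{\ell(w^0)}$ is real (using $k_\al\in\bR$ and $\al^\bullet=\ol\al$) and that the off-coset contributions genuinely cancel under $\ep_{\wti M}$. The structural factorization through $\Phi^{-1}\circ\Ind\iota_\Caa\circ L_{R^0}$ is designed precisely to sidestep this bookkeeping by reducing each piece to a previously-established property, and is therefore my preferred approach.
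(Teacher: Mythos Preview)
Your approach is genuinely different from the paper's. The paper does not attempt a direct verification of hermiticity valid for all $\nu$; instead it uses a density argument. By Lemma~\ref{l:basis}, on the open set $V^{\perp,\vee}_{M,\reg}$ the elements $\{\C R_x\otimes v_i:x\in\C J_M\}$ form a basis, and one checks (using $R_xR_y=R_{xy}$ and Lemma~\ref{l:starbullet}(1)) that
\[
\ep_{\wti M}\bigl(t_{(w^0)^{-1}}\,\C R_y^\bullet\,\C R_x\,R^0\bigr)=0\quad\text{unless }x=y.
\]
Thus the form is block-diagonal in this basis; on each diagonal block it is a $\bullet_M$-self-adjoint scalar times $\langle\cdot,\cdot\rangle_{\sigma,\bullet}$, hence hermitian. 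The hermiticity identity then extends to all $\nu$ by analyticity. This completely bypasses the combinatorial identity in $\bH_{\wti M}$ that you set out to prove.

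Both of your routes, as written, have a real gap. In the structural route, ``hermiticity factor by factor'' is not the correct formulation: what you need is $(L_{R^0})^h\circ(\Ind\iota_\Caa)^h\circ(\Phi^{-1})^h=\Phi^{-1}\circ\Ind\iota_\Caa\circ L_{R^0}$, which requires identifying the hermitian adjoints of $\Phi$ and $L_{R^0}$ (maps between \emph{different} spaces) and showing they reassemble correctly---you have not done this, and it is not obvious from the cited lemmas alone. In the computational route, the ``$\bullet$-analogue of Proposition~\ref{p:stareps}'' you invoke is neither stated nor proved in the paper, and it is not straightforward: since $\bullet$ reverses products, applying it to a left-coset expansion $h=\sum_{x\in\C J_M}t_xm_x$ gives $h^\bullet=\sum m_x^\bullet t_{x^{-1}}$, which is not in left-coset form and requires nontrivial rewriting before $\ep_{\wti M}$ can be read off. (Corollary~\ref{c:buleps} is not this statement; it involves conjugation by $t_{w_0}$ and lands in $\star_{\delta(M)}$, not $\bullet_{\wti M}$.) The paper's diagonalization-plus-density argument is designed precisely to avoid these difficulties.
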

\begin{proof}
The claim  follows (on $V^{\perp,\vee}_{M,\reg}$ first, and thus always) from the formula
\[
\ep_{\wti M}(t_{(w^0)^{-1}}\C R^\bullet_y\C R_xR^0)=0\ \text{  unless  } x=y.
\]  
\end{proof}

As above, when $\nu\in (V^{\perp,\vee}_{M,\reg})_\bC,$ a basis of
$X(M,\sigma)$ is given by $\{\C R_x\otimes v\}$, where $x$ ranges in
$\C J_M,$ and $v$ ranges over a basis of $\sigma_0.$ In this case, one
obtains a simpler formula for the signature of the $\bullet$-form.

\begin{corollary}\label{c:reg-form}
When $\nu\in (V^{\perp,\vee}_{M,\reg})_\bC,$ the signature of the
$\bullet$ form on $X(M,\sigma),$ $\sigma=\sigma_0\otimes\bC_\nu$, is
given by
\begin{equation}
\langle \C R_x\otimes v_1,\C R_y\otimes v_2\rangle_{\bullet}=\begin{cases} 0,&x\neq y,\\
\displaystyle{ f(\cc(\sigma))\left\langle \sigma\left(\prod_{\al>0, x\al<0} \frac{\al-k_\al}{\al+k_\al}\right)v_1,v_2\right\rangle_{\sigma,\bullet}},&x=y,
\end{cases} 
\end{equation}
where $x,y\in\C J_M,$ $v_1,v_2\in U_\sigma$, and
$f(\cc(\sigma))=\displaystyle{(-1)^{|R^+\setminus R^+_M|} \prod_{\al\in R^+\setminus R^+_M}\frac{\langle
  \al,\cc(\sigma)\rangle}{k_\al+\langle\al,\cc(\sigma)\rangle}}.$
\end{corollary}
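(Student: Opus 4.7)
The plan is to specialize Proposition~\ref{p:induced-bulletform} to $h_1=\C R_x$, $h_2=\C R_y$ and then unwind the resulting expression $\langle\Caa\sigma[\ep_{\wti M}(t_{(w^0)^{-1}}\C R_y^\bullet \C R_x R^0)]v_1,v_2\rangle_{\sigma,\bullet}$ on a case by case basis. For $x\ne y$, I would invoke Theorem~\ref{t:induced-bulletform} directly: its proof establishes the vanishing $\ep_{\wti M}(t_{(w^0)^{-1}}\C R_y^\bullet \C R_x R^0)=0$, and so the off-diagonal entries of the form are zero.

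The substantive work is the diagonal case $x=y$. Here the first step is to exploit that $\C R_x^\bullet \C R_x$ collapses to a scalar rational function. Applying Lemma~\ref{l:starbullet}(1) and factoring, I would write
\[
\C R_x^\bullet \C R_x=(-1)^{\ell(x)}R_{x^{-1}}\Big(\prod_{\al>0,\,x^{-1}\al<0}\frac{k_\al-\al}{k_\al+\al}\Big)R_x,
\]
push the middle factor past $R_x$ using $aR_x=R_x\cdot x^{-1}(a)$ from (\ref{e:aR}), and collapse $R_{x^{-1}}R_x=R_e=1$ via property (4) of the $R_w$ Lemma. A change of variables $\al\mapsto -x(\gamma)$ on the resulting product, combined with Weyl-invariance $k_{x\gamma}=k_\gamma$, re-indexes it by the inversion set $\{\gamma>0:x\gamma<0\}$ of $x$, producing a rational function $F_x$ up to an overall sign.

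Next I would reduce $\ep_{\wti M}(t_{(w^0)^{-1}}F_x R^0)$ to its ``leading'' contribution. Using (\ref{e:aR}) once more to commute, $F_x R^0=R^0\cdot (w^0)^{-1}(F_x)$, so that by right $\bH_{\wti M}$-linearity of $\ep_{\wti M}$ (localized at generic $\nu$) the problem becomes the evaluation $\ep_{\wti M}(t_{(w^0)^{-1}}R^0)$ multiplied by the scalar $(w^0)^{-1}(F_x)$. Expanding $R^0=R_{w^0}=\sum_{y\le w^0}t_y a_y$ and observing that the coset $(w^0)^{-1}y$ lies in $W_{\wti M}$ only for $y\in w^0 W_{\wti M}=w_0 W_{\wti M}$, the minimality of $w^0$ in this coset together with $y\le w^0$ forces $y=w^0$. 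The surviving coefficient is $a_{w^0}=\prod_{\al\in R^+\cap w^0 R^-}\frac{\al}{k_\al-\al}$.

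The final step is combinatorial identification. Using $w^0=w_{0,M}w_0$ and the standard fact that $w_{0,M}$ preserves $R^+\setminus R^+_M$, one obtains $R^+\cap w^0 R^-=R^+\setminus R^+_M$; in particular $\ell(w^0)=|R^+\setminus R^+_M|$. Applying $\Caa\sigma$ (which on $S(V_\bC)$ acts as $\sigma\circ w^0$) to $a_{w^0}$, the bijection $\al\mapsto -w^0(\al)$ of $R^+\setminus R^+_M$ to itself converts $a_{w^0}$ into $(-1)^{|R^+\setminus R^+_M|}\prod_{\al\in R^+\setminus R^+_M}\frac{\al}{k_\al+\al}$ evaluated at $\cc(\sigma)$, which is exactly $f(\cc(\sigma))$. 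Applying $\Caa\sigma$ to $(w^0)^{-1}(F_x)$ and tracking signs yields $\sigma\big(\prod_{\al>0,\,x\al<0}\frac{\al-k_\al}{\al+k_\al}\big)$. The main obstacle throughout is disciplined sign bookkeeping across the three successive re-indexings (by $x^{-1}$, by $w^0$, and by $\Caa\sigma$), each invoking Weyl-invariance of the parameters $k_\al$.
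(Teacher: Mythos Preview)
Your proposal is correct and follows exactly the route the paper indicates: the off-diagonal vanishing is taken from the proof of Theorem~\ref{t:induced-bulletform}, and the diagonal entries are computed by inserting the expression for $\C R_x^\bullet$ from Lemma~\ref{l:starbullet}(1) into Proposition~\ref{p:induced-bulletform} and simplifying. The paper's own proof records only these two pointers, so your write-up is in fact a more detailed execution of the same argument rather than an alternative one.
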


\begin{proof}
The first claim follows as in the proof of Theorem
\ref{t:induced-bulletform}. For the second claim, one uses  the formula 
for $\C R_x^\bullet$ in Lemma \ref{l:starbullet}.
\end{proof}

Since the factor $f(\cc(\sigma))$ is common for all $x$, it makes sense to normalize the hermitian form by dividing by it. The resulting form has the property that
\begin{equation}\label{e:normalized}
\langle\C R_1\otimes v_1,\C R_1\otimes v_2\rangle_\bullet=\langle v_1,v_2\rangle_{\sigma,\bullet}.
\end{equation}

\begin{remark}\label{r:Opdam}
In the particular case when $\sigma_0=\triv$ (so that $\sigma$ is the
one-dimesional character $\bC_\nu$) and $\nu$ is large, we recover a
result of Opdam \cite[Theorem 4.1]{O2}. In that case, the induced
module $X(M,\nu)=\bH\otimes_{\bH_M}\bC_\nu$ is $\bA$-semisimple with a
basis given by $\{\C R_x\otimes \one_\nu: x\in \C J_M\}$, and in the
normalization (\ref{e:normalized}), the form is
\begin{equation}\label{e:induced-triv-reg}
\langle \C R_x\otimes\one_\nu,\C R_y\otimes \one_\nu\rangle_{\bullet}=\delta_{x,y}
\displaystyle{\prod_{\al>0, x\al<0} \frac{\langle\al,\nu\rangle-k_\al}{\langle\al,\nu\rangle+k_\al}}
\end{equation}
It is easy to verify that this formula agrees (switching the
between roots and coroots) with the one in
\cite[Theorem 4.1.(4)]{O2}, after taking the scaling factor
$a(\lambda,k)=\prod_{\al>0}(1-k_\al/\wti\lambda(\al^\vee))$ in
the notation therein.
\end{remark}

\subsection{}We have analyzed the construction of induced $\bullet$-invariant forms. The same type of discussion works for $\star$-invariant forms, or otherwise, the result for $\star$-invariant forms can be deduced via formal manipulations as in section \ref{sec:star-bullet}. We only state the result and skip more details. A similar result was obtained in \cite[section 1.8]{BM3}.

\begin{proposition}
Suppose $(\sigma,U_\sigma)$ has a $\star$-invariant hermitian form $\langle~,~\rangle_{\sigma,\star}.$ The pairing
\[\langle h_1\otimes v_1,h_2\otimes v_2\rangle_\star=\langle\Caa\sig[\ep_{\wti
  M}(h_2^\star h_1 R^0)]v_1,v_2\rangle_{\sigma,\star}
\]
on $X(M,\sigma)$ is a hermitian, $\star$-invariant (sesquiliniar) form.
\end{proposition}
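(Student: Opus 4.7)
The plan is to parallel the proofs of Proposition~\ref{p:induced-bulletform} and Theorem~\ref{t:induced-bulletform}, substituting the anti-involution $\star$ for $\bullet$ throughout and replacing Corollary~\ref{c:buleps} by Proposition~\ref{p:stareps} wherever the $\ep_{\wti M}$-compatibility of the star operation is invoked. Sesquilinearity is immediate since $h_2\mapsto h_2^\star$ is conjugate-linear and $\ep_{\wti M}$ is $\bC$-linear. For $\star$-invariance, a direct calculation suffices: for $h\in\bH$,
\[
\langle\pi_\sig(h)(h_1\otimes v_1),h_2\otimes v_2\rangle_\star
=\langle\Caa\sig[\ep_{\wti M}(h_2^\star h h_1 R^0)]v_1,v_2\rangle_{\sig,\star}
=\langle h_1\otimes v_1,\pi_\sig(h^\star)(h_2\otimes v_2)\rangle_\star,
\]
using $(h^\star h_2)^\star=h_2^\star h$ and $\bC$-linearity of $\ep_{\wti M}$.

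The main obstacle is the hermitian identity $\langle u,v\rangle_\star=\overline{\langle v,u\rangle_\star}$. Following section~\ref{sec:symmetry}, I would first restrict to $\sig=\sig_0\otimes\bC_\nu$ with $\nu\in(V^{\perp,\vee}_{M,\reg})_\bC$, where Lemma~\ref{l:basis} supplies the convenient basis $\{\C R_x\otimes v_i\}$. The crucial step is the $\star$-analogue of the vanishing identity underlying Theorem~\ref{t:induced-bulletform}:
\[
\ep_{\wti M}\bigl(\C R_y^\star\,\C R_x\, R^0\bigr)=0\qquad\text{unless }y=x.
\]
To establish it I would substitute Lemma~\ref{l:starbullet}(2) for $\C R_y^\star$, use multiplicativity $\C R_x R^0=\C R_{xw^0}$, apply the commutation rule (\ref{e:aR}) to shift the rational $S(V_\bC)$-factors across the $t_z$'s, and track the $W_{\wti M}$-cosets via Corollary~\ref{c:M-aM} together with the identity $\delta(w^0_M)=w^0_{\wti M}$; for $y\neq x$ no surviving term lies in $W_{\wti M}$, and $\ep_{\wti M}$ projects the expression to zero.

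With the off-diagonal vanishing in hand, the diagonal entries are computed exactly as in Corollary~\ref{c:reg-form}, producing a product of rational scalars in $\nu$ (symmetric under complex conjugation on the real form) paired with a value of the hermitian form $\langle~,~\rangle_{\sig,\star}$ on $U_\sig$; this gives hermitianity on the regular locus. Since both sides of $\langle u,v\rangle_\star-\overline{\langle v,u\rangle_\star}$ are rational in $\nu$ with poles only along a proper subvariety, the identity extends by continuity to all $\nu$, and then to arbitrary $(\sig,U_\sig)$ via the splitting $\bH_M=\bH_M^0\otimes S(V_M^\perp)$ by varying the central character of $\sig_0$. The delicate step is the off-diagonal vanishing: Lemma~\ref{l:starbullet}(2) introduces a $\delta$-twist and an inner conjugation by $t_{w_0}$ not present in the $\bullet$ case, so Proposition~\ref{p:stareps} and the $M\leftrightarrow\wti M$ coset bookkeeping of Corollary~\ref{c:M-aM} must be combined with care to realign the minimal-coset decomposition after applying $\star$.
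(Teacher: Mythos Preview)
Your proposal is correct and follows exactly the approach the paper indicates: the paper does not give a detailed proof here, stating only that ``the same type of discussion works for $\star$-invariant forms'' and pointing to \cite{BM3} for a similar result, and your plan to rerun the arguments of Proposition~\ref{p:induced-bulletform} and Theorem~\ref{t:induced-bulletform} with $\star$ in place of $\bullet$ (invoking Proposition~\ref{p:stareps} and Lemma~\ref{l:starbullet}(2) at the appropriate points) is precisely that. The paper also mentions a second route---deducing the $\star$-form directly from the already-constructed $\bullet$-form via the formal relation (\ref{e:relstar}) and the translation in section~\ref{sec:star-bullet}---which would sidestep the $\delta$-twisted coset bookkeeping you flag as delicate; either approach is acceptable.
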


\section{Langlands classification and $\bA$-weights}

We use Langlands classification to deduce certain results about the
$\bA$-weights of irreducible $\bH$-modules. As a consequence, we show
that every irreducible $\bH$-module with real central character admits
a $\bullet$-invariant hermitian form.

\subsection{Langlands quotient} Retain the notation from section
\ref{sec:symmetry}. 
The following form of Langlands
classification is proved in \cite{Ev}.

\begin{theorem}\label{t:Langlands-quot}

\begin{enumerate}
\item[(i)] Let $L$ be an irreducible $\bH$-module. Then $L$ is a quotient of $X(M,\nu)=\bH\otimes_{\bH_M}(\sigma\otimes \bC_\nu)$, where $\sigma$ is an irreducible tempered $\bH_{M}^0$-module, and $\nu\in V_M^\perp$ such that $\re\nu$ is dominant, i.e., $(\re\nu,\al)>0$, for all $\al\in\Pi\setminus\Pi_M.$
\item[(ii)] If $\sigma,\nu$ are as in (i), then $\bH\otimes_{\bH_M}(\sigma\otimes \bC_\nu)$ has a unique irreducible quotient $L(\sigma,\nu)$.
\item[(iii)] If $L(\sigma,\nu)\cong L(\sigma',\nu')$, then $M=M',$ $\sigma\cong\sigma'$, and $\nu=\nu'.$
\end{enumerate}
\end{theorem}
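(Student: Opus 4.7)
The plan is to follow the classical Langlands strategy, adapted to the graded Hecke algebra setting, with the analysis centered on the set $\Omega(L)$ of $\bA$-weights of $L$ and their real parts.

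For part (i), I would select a weight $\lambda\in\Omega(L)$ whose real part is extremal in the following sense. Among all representatives in the $W$-orbit of $\Omega(L)$, choose one such that $-\re\lambda$ lies in the closure of the dominant chamber, and among such, is as ``interior'' to that closure as possible. Concretely, set
\[
\Pi\setminus\Pi_M := \{\al\in\Pi : (-\re\lambda,\al^\vee) > 0\},
\]
and decompose $-\re\lambda = \mu + \nu$ with $\mu\in V_M$ and $\nu\in V_M^\perp$. By construction, $\re\nu$ is strictly dominant on $\Pi\setminus\Pi_M$, and $\re\lambda + \nu \in V_M$ represents the ``tempered part'' of $\lambda$. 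Using the operators $\ep_M$ from (\ref{e:eps}), pass to the Jacquet-type $\bH_M$-module and let $U$ be an irreducible $\bH_M$-subquotient containing the $\lambda$-weight space. By the extremality of $\lambda$, all $\bA$-weights of $U$ have real part in the ``anti-tempered'' cone of $\bH_M^0$ shifted by $\nu$, so $U \cong \sigma\otimes\bC_\nu$ with $\sigma$ a tempered $\bH_M^0$-module (verified via Definition \ref{d:tempered}). An application of the second form of Frobenius reciprocity (\ref{e:Frob2}), together with the irreducibility of $L$, produces the surjection $X(M,\sigma\otimes\bC_\nu)\twoheadrightarrow L$.

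For part (ii), I would use Lemma \ref{l:action-induced} to enumerate the $\bA$-weights of $X(M,\sigma,\nu)$ as $W$-translates of the weights of $\sigma\otimes\bC_\nu$. The key observation is that among all weights $\mu$ of $X(M,\sigma,\nu)$, those with $V_M^\perp$-component equal to $\nu$ itself appear only in the summand $t_1\otimes U_\sigma$: any nontrivial coset representative $x\in\C J_M$ strictly decreases the $V_M^\perp$-component of $\nu$ by a non-negative combination of elements of $R^+\setminus R_M^+$ on the real part (using the strict dominance of $\re\nu$). Hence the sum of generalized weight spaces with $V_M^\perp$-component equal to $\nu$ coincides with $1\otimes U_\sigma$ as an $\bH_M$-module, and it cannot be contained in any proper $\bH$-submodule without contradicting the irreducibility of $\sigma$ (since the submodule would have to contain all of $1\otimes U_\sigma$, hence all of $X(M,\sigma,\nu)$). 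Therefore $X(M,\sigma,\nu)$ has a unique maximal proper submodule, and the quotient $L(\sigma,\nu)$ is the unique irreducible quotient.

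For part (iii), I would recover the data from $L(\sigma,\nu)$ by reversing the above construction: $\re\nu$ is singled out as the extremal real part of $\Omega(L)$ of the type used in (i), which immediately fixes $\Pi_M$ as the set of simple roots orthogonal to $\re\nu$, and hence fixes $M$. Once $M$ is determined, the weight spaces of $L$ with $V_M^\perp$-component $\nu$ form a tempered $\bH_M^0$-module isomorphic to $\sigma$, by the argument in part (ii) applied in reverse. The main obstacle will be part (i): making the extremality argument rigorous requires verifying that the chosen weight $\lambda$ really does give rise to a tempered $\bH_M^0$-submodule at the Jacquet level, which is the analogue of Casselman's criterion and must be checked carefully against Definition \ref{d:tempered}, using that the extremal choice prevents any weight of $U$ from having strictly positive pairing with an element of $V_M^+$.
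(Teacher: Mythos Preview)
The paper does not prove this theorem itself; it cites Evens \cite{Ev} and only reviews how the Langlands data $(M,\sigma,\nu)$ are extracted from $L$. That review, however, is essentially the heart of the proof of (i), and your selection of the weight $\lambda$ differs from it in a way that breaks the argument. Evens (and the paper) choose $\lambda\in\Omega(L)$ with $\re\lambda$ \emph{maximal} in the root order $v\ge v'\Leftrightarrow v-v'\in\bR_{\ge 0}R^{\vee,+}$, and then apply Langlands' disjointness lemma: every $v\in V^\vee$ lies in a unique set $S_F=\{\sum_{j\notin F}c_j\omega_j^\vee-\sum_{i\in F}d_i\alpha_i^\vee:c_j>0,\ d_i\ge 0\}$, and one sets $\Pi_M=\{\alpha_i:i\in F(\re\lambda)\}$ and $\nu=(\re\lambda)^0$. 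The monotonicity $v_1\ge v_2\Rightarrow v_1^0\ge v_2^0$ is precisely what forces the resulting $\sigma$ to be tempered.

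Your criterion instead asks for $\lambda\in\Omega(L)$ with $-\re\lambda$ in the closed dominant chamber of $V^\vee$. Such a weight need not exist. Already for $\bH$ of type $A_1$ with $k=1$ and $L$ the trivial one-dimensional module, $\Omega(L)=\{\omega^\vee\}$ and $-\omega^\vee$ is antidominant; more generally, any Langlands quotient with $\Pi_M=\emptyset$ (a spherical quotient at a strictly dominant real parameter) has every weight with strictly dominant real part, so your selection step is vacuous and the construction never begins. The fix is exactly the $S_F$ decomposition above: it attaches to \emph{any} $\re\lambda$ a well-defined $F$ and a strictly dominant $\nu$, without requiring $\re\lambda$ itself to lie in a particular chamber. (A smaller point: to exhibit $L$ as a \emph{quotient} of $X(M,\sigma,\nu)$ once $\sigma\otimes\bC_\nu\hookrightarrow L|_{\bH_M}$ is found, you want the first form of Frobenius reciprocity $\Hom_\bH(\bH\otimes_{\bH_M}U,L)\cong\Hom_{\bH_M}(U,L|_{\bH_M})$, not the second form (\ref{e:Frob2}).) Your outlines for (ii) and (iii) are along the right lines and agree with Evens; the paper invokes the same weight-component argument later, in Lemma \ref{l:inducedweights}.
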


We need to review the
construction of $\Pi_M$, $\sigma$ and $\nu$ from $L$.

Let $\{\om_1^\vee,\dots,\om_n^\vee\}$ be the basis of $V^\vee$ consisting of
fundamental coweights, i.e., the basis dual to $\Pi\subset V.$
For every subset $F\subset \{1,2,\dots,n\}$, let 
$$S_F=\{\sum_{j\notin F} c_j\omega_j^\vee-\sum_{i\in F} d_i
\alpha_i^\vee: c_j>0, d_i\ge 0\}\subseteq V^\vee.$$
A lemma of Langlands, cf. \cite[Lemma 2.3]{Ev} says that for every
$v\in V^\vee$, there exists a unique subset $F$ such that $v\in S_F.$
Denote this subset by $F(v).$ If $v=\sum_{j\notin F} c_j\omega_j^\vee-\sum_{i\in F} d_i
\alpha_i^\vee$, then set $$v^0=\sum_{j\notin F} c_j\omega_j^\vee.$$

On $V^\vee$ define the order relation $\ge$ by $v\ge v'$ if
$v-v'\in \bR_{\ge 0}\Phi^{\vee,+}.$
Then, see for example
\cite[Lemma 2.4]{Ev}, 
\begin{equation}
v_1\ge v_2\text{ implies } v_1^0\ge v_2^0.
\end{equation}
Choose $\lambda\in\Omega(L)$ such
that $\re\lambda$ is maximal with respect to $\ge$ among the real
parts of weights of $L$. Then
\begin{equation}\label{Langlands-nu}
\nu=\lambda|_{V_M^{\vee,\perp}},
\end{equation}
and $\sigma$ is an irreducible $\bH_M^0$-module such that
$\sigma\otimes \bC_\nu$ occurs in the restriction of $L$ to
$\bH_M=\bH_{M}^0\otimes S(V_M^\perp).$ Moreover the weights of
$\sigma$ are:
\begin{equation}\label{weights-sigma}
\Omega(\sigma)=\{\lambda'|_{V_M^\vee}: \lambda'\in\Omega(L),\
\lambda'|_{V_M^{\vee,\perp}}=\nu,\ F(\re\lambda')=\Pi_M\}\subset V_M^\vee.
\end{equation}

\subsection{Iwahori-Matsumoto involution} The Iwahori-Matsumoto involution $\tau$ of $\bH$ is defined on
the generators of $\bH$ by:
\begin{equation}
\tau(t_{s_\al})=-t_{s_\al},\ \al\in\Pi,\quad \tau(a)=-a,\ a\in V_\bC.
\end{equation}
It is immediate that this assignment extends to an algebra
automorphism and therefore to a involution, denoted $\tau$ again on
$\bH$-modules. Notice that if $X$ is an $\bH$-module, then
\begin{equation}
\begin{aligned}
&\tau(X)|_W\cong X|_W\otimes\sgn,\\
&\Omega(\tau(X))=-\Omega(X),\quad \tau(X)_{-\lambda}\cong X_{\lambda},\ \lambda\in\Omega(X).
\end{aligned}
\end{equation}

\begin{lemma}\label{IMtempered} Assume $\bH$ is semisimple. 
Suppose $X$ is an irreducible tempered module such that
$\tau(X)$ is also tempered. Then the central character $\chi$ of $X$ is
imaginary, i.e., $\chi\in \sqrt{-1} V$, and $X\cong X(\chi)$.
\end{lemma}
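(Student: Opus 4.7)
The plan is to first show that the central character must be imaginary, and then to identify $X$ with the minimal principal series $X(\chi)$. For the first part, I would apply Casselman's criterion (Definition \ref{d:tempered}) to both $X$ and $\tau(X)$ simultaneously. Using $\Omega(\tau(X))=-\Omega(X)$ recorded just above the lemma, temperedness of $X$ gives $(\omega,\re\lambda)\le 0$ and temperedness of $\tau(X)$ gives $(\omega,\re\lambda)\ge 0$ for every $\lambda\in\Omega(X)$ and every $\omega\in V^+$. Hence $(\omega,\re\lambda)=0$ on the open positive chamber $V^+$, and since $V^+$ spans $V$ as a real vector space, this forces $\re\lambda=0$ for every weight. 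In particular $\chi\in\sqrt{-1}V^\vee$.

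For the second part, after replacing $\chi$ by a $W$-translate in its orbit I may assume $\chi\in\Omega(X)$. The finite-dimensional generalized weight space $X_\chi$ is a nonzero module for the commutative algebra $S(V_\bC)$, hence contains an honest eigenvector $v$ with $\pi(\omega)v=(\omega,\chi)v$ for all $\omega\in V_\bC$. Frobenius reciprocity (the universal property of the induced module) then produces a nonzero $\bH$-module map
\[
X(\chi)=\bH\otimes_{S(V_\bC)}\bC_\chi\longrightarrow X,\qquad 1\otimes 1\mapsto v,
\]
which is surjective because $X$ is simple.

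The main substance of the proof is then showing that this surjection is an isomorphism, i.e., that $X(\chi)$ is already irreducible whenever $\chi$ is imaginary. I would derive this from the intertwining elements $R_w$ of Section 3.3. The simple factor $R_{s_\alpha}$ has denominator $k_\alpha-\alpha$, and, using Lemma \ref{l:starbullet}(1), the composition $R_{s_\alpha}^\bullet R_{s_\alpha}$ is a scalar built out of the factors $k_\alpha\pm\alpha$. Evaluated at $\chi\in\sqrt{-1}V^\vee$, each quantity $k_\alpha\pm\langle\alpha,\chi\rangle$ is the sum of a nonzero real number (namely $\pm k_\alpha$) and a purely imaginary one, hence is nonzero. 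Therefore the specialized operators $R_w(\chi)\colon X(\chi)\to X(w\chi)$ are everywhere defined and invertible for every $w\in W$, which implies (Kato's irreducibility criterion, or directly: any weight vector generates $X(\chi)$) that $X(\chi)$ is simple. Combined with the surjection above this gives $X\cong X(\chi)$.

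The main obstacle is the irreducibility step, particularly in the singular case where $\langle\alpha,\chi\rangle=0$ for some $\alpha\in R^+$; but since $k_\alpha\neq 0$, the factors $k_\alpha\pm\langle\alpha,\chi\rangle=\pm k_\alpha$ remain nonzero there too, so the $R$-operator argument applies uniformly across all imaginary central characters.
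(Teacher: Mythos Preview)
Your argument for the first half is identical to the paper's: both apply the temperedness inequalities to $\lambda$ and to $-\lambda$ and conclude $\re\lambda=0$, hence $\chi$ is imaginary. Your elaboration of the second half (choosing an $\bA$-eigenvector and invoking Frobenius reciprocity to obtain a surjection $X(\chi)\twoheadrightarrow X$) makes explicit what the paper leaves implicit; the paper then simply \emph{cites} the irreducibility of $X(\chi)$ at imaginary $\chi$ from \cite{Ch} and \cite[Theorem 1.3]{O2}, rather than arguing it via the $R$-operators.

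There is, however, a genuine gap in your ``direct'' argument. For regular $\chi$ the weight spaces in $X(\chi)$ are one-dimensional, and the invertibility of each $R_w$ indeed shows that any weight vector is carried by an element of $\bH$ to the cyclic generator $1\otimes\one_\chi$; irreducibility follows. But for singular imaginary $\chi$ (e.g.\ $\chi=0$) the genuine $\bA$-eigenspaces can have dimension $>1$ and the generalized weight spaces are not $\bA$-semisimple. In that situation the statement ``any weight vector generates $X(\chi)$'' does \emph{not} follow merely from the fact that $R_w$ has neither a pole nor a zero: the $R_w$ give $\bH$-isomorphisms $X(w\chi)\cong X(\chi)$, but an $\bH$-automorphism of $X(\chi)$ says nothing about simplicity. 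What one actually needs is either the Cherednik--Opdam argument the paper cites, or a precise form of the Kato-type criterion for graded Hecke algebras (irreducibility of $X(\chi)$ iff $\langle\alpha,\chi\rangle\neq\pm k_\alpha$ for all $\alpha\in R^+$, valid for $k_\alpha\neq 0$), applied as an external input. Once you acknowledge that this step is a citation rather than a self-contained deduction from the $R$-operator calculus, your proof and the paper's coincide.
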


\begin{proof}
Let $\lambda\in\Omega(X)$ be arbitrary. Since $X$ is tempered,
$(\omega,\re\lambda)\le 0$ for all dominant $\omega\in V.$ If $\tau(X)$
is also tempered, $(\omega,-\re\lambda)\le 0$ as well, hence
$(\omega,\re\lambda)=0$ for all $\omega$ dominant in $V$. Thus
$\re\lambda=0$ and so $\chi\in\sqrt{-1}V$, which means $X\cong
X(\chi)$, since at imaginary central character the minimal principal
series is irreducible (\cite{Ch}, see \cite[Theorem 1.3]{O2}).
\end{proof}

\subsection{$\bA$-weights} 
Let $(\pi,X)$ be an irreducible $\bH$-module, and $\Omega(X)\subset
V_\bC^\vee$ the set of $\bA=S(V_\bC)$-weights of $X$. As noted before,
$\Omega(X)\subset W\cdot \cc(X).$ Define the $\bA$-character of $X$ to
be the formal sum:
\begin{equation}
\Theta_\bA(X)=\sum_{\lambda\in\Omega_X}(\dim X_\lambda)~ e^\lambda,
\end{equation}
where $X_\lambda=\{x\in X: \text{ for all }a\in \bA,\
(\pi(a)-\lambda)^nx=0, \text{ for some }n\}$ is the generalized
$\lambda$-weight space. Denote the multiplicity of $\lambda$ in $X$ by
\begin{equation}
m[\lambda:X]:=\dim X_\lambda.
\end{equation}

The following proposition is the graded Hecke algebra analogue of a
result of Casselman for $p$-adic groups, and Evens-Mirkovi\'c
\cite[Theorem 5.5]{EM} for the geometric affine Hecke algebras.

\begin{proposition}\label{p:weightsunique}
Let $X$, $X'$ be two irreducible $\bH$-modules such that
$\Omega(X)=\Omega(X')$. Then $X\cong X'.$
\end{proposition}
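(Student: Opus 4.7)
The plan is to show that $\Omega(X)$ determines the Langlands parameters of $X$, so that the uniqueness in Theorem~\ref{t:Langlands-quot} forces $X\cong X'$. We argue by induction on the semisimple rank of $\bH$, using the Iwahori--Matsumoto involution to handle the tempered base case. Without loss of generality we may assume $\bH$ is semisimple, since in the general case the center of $V_\bC$ acts on each irreducible module by a scalar that is read off from any single weight.

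Since $\Omega(X)$ is $W$-invariant and contained in $W\cdot\cc(X)$, the equality $\Omega(X)=\Omega(X')$ immediately implies that $X$ and $X'$ share the same central character. Apply Theorem~\ref{t:Langlands-quot} to write $X=L(\sigma,\nu)$ and $X'=L(\sigma',\nu')$, with Langlands data $(M,\sigma,\nu)$ and $(M',\sigma',\nu')$. By the Langlands partition lemma, every $v\in V^\vee$ has a unique decomposition
\[
v=\sum_{j\notin F(v)} c_j\omega_j^\vee-\sum_{i\in F(v)} d_i\alpha_i^\vee,
\]
so by choosing $\lambda\in\Omega(X)$ whose real part is maximal with respect to $\ge$, the recipe leading to (\ref{Langlands-nu}) recovers $\Pi_M=F(\re\lambda)$ and $\nu=\lambda|_{V_M^{\vee,\perp}}$ directly from $\Omega(X)$. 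Hence $M=M'$ and $\nu=\nu'$.

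Next, formula~(\ref{weights-sigma}) expresses $\Omega(\sigma)$ as an explicit subset of restrictions to $V_M^\vee$ of those weights of $X$ whose projection onto $V_M^{\vee,\perp}$ equals $\nu$ and whose real part has $F$-type $\Pi_M$; the same description applies to $\sigma'$. Consequently $\Omega(\sigma)=\Omega(\sigma')$. When $\Pi_M\subsetneq\Pi$, the algebra $\bH_M^0$ has strictly smaller semisimple rank, so the inductive hypothesis gives $\sigma\cong\sigma'$, and the uniqueness in Theorem~\ref{t:Langlands-quot} then yields $X\cong X'$.

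The main obstacle is the tempered case $\Pi_M=\Pi$, where the induction on rank stalls. The plan is to bring in the Iwahori--Matsumoto involution $\tau$: since $\Omega(\tau(X))=-\Omega(X)=-\Omega(X')=\Omega(\tau(X'))$ and $\tau$ is an algebra involution, if $\tau(X)$ is \emph{not} tempered then its Langlands parameters involve some proper $\Pi_{M_1}\subsetneq\Pi$, and the argument above applied to $\tau(X),\tau(X')$ only invokes the inductive hypothesis at the strictly smaller algebra $\bH_{M_1}^0$; this yields $\tau(X)\cong\tau(X')$, whence $X\cong X'$ by invertibility of $\tau$. If instead both $X$ and $\tau(X)$ are tempered, Lemma~\ref{IMtempered} identifies $X$ with the irreducible minimal principal series $X(\chi)$ at an imaginary central character $\chi$; the same holds for $X'$, and since $\chi$ is recovered from $\Omega(X)$ as its $W$-orbit, we conclude $X\cong X'$.
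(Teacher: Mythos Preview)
Your approach is essentially identical to the paper's: induction on $|\Pi|$, using that the Langlands datum $(M,\nu,\Omega(\sigma))$ is read off from $\Omega(X)$ via (\ref{Langlands-nu})--(\ref{weights-sigma}) in the nontempered case, and invoking the Iwahori--Matsumoto involution together with Lemma~\ref{IMtempered} in the tempered case.

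One small correction: you write that ``$\Omega(X)$ is $W$-invariant''. This is false in general (e.g.\ the Steinberg module has a single $\bA$-weight). Fortunately you do not actually use $W$-invariance; the conclusion $\cc(X)=\cc(X')$ follows simply because $\Omega(X)\subset W\cdot\cc(X)$ is nonempty, so any weight already determines the central character as a $W$-orbit. Just delete the phrase ``is $W$-invariant and''.
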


\begin{proof}
By hypothesis $\cc(X)=\cc(X')=\chi$. Suppose $X$ (and therefore also $X'$) is not tempered. By Langlands
classification, $X$ is the unique irreducible quotient of
$\bH\otimes_{\bH_M} (\sigma\otimes \bC_\nu)$, where $\Pi_M\subsetneq
\Pi$, $\nu$ and $\Omega(\sigma)$ are uniquely determined by
$\Omega(X).$ Therefore, by induction of $|\Pi|$, the claim follows for
nontempered $X$.

Now assume that $X$ is tempered. We use the Iwahori-Matsumoto
involution and Lemma \ref{IMtempered}: either $\tau(X)$ is not
tempered and since $\Omega(\tau(X))=\Omega(\tau(X'))$, we may finish
as above, or else $X$ (and also $X'$) is the irreducible minimal
principal series $X(\chi)$ with imaginary central character $\chi.$
\end{proof}

As a consequence, we deduce indirectly that every irreducible module
with real central character has a hermitian $\bullet$-invariant form.

\begin{corollary}\label{c:realbullet}
Let $X$ be an irreducible $\bH$-module. Then $X$ admits a
$\bullet$-invariant hermitian form if and only if
$\overline{\Omega(X)}=\Omega(X).$ In particular, if $X$ has real
central character then $X$ admits a $\bullet$-invariant form.
\end{corollary}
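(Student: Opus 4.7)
The plan is to combine Proposition \ref{p:weightsunique}---an irreducible $\bH$-module is determined by its set of $\bA$-weights---with a direct computation of the $\bA$-weights of the $\bullet$-hermitian dual. Recall that $(\pi^\bullet,X^h)$ denotes the dual space of $X$ equipped with the action $(\pi^\bullet(h)f)(v)=f(\pi(h^\bullet)v)$, and that a $\bullet$-invariant sesquilinear form on $X$ is the same datum as an $\bH$-equivariant map $\phi\colon X\to X^h$. For irreducible $X$, the dual $X^h$ is also irreducible (hermitian duality is an exact contravariant functor on finite-dimensional modules), and any nonzero such $\phi$ is automatically an isomorphism.

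The key computation is to show $\Omega(X^h)=\overline{\Omega(X)}$. The generalized $\bA$-weight decomposition $X=\bigoplus_\mu X_\mu$ induces a decomposition $X^h=\bigoplus_\mu (X_\mu)^h$ of $\bA$-modules. On the summand $(X_\mu)^h$, using $\omega^\bullet=\overline\omega$ for $\omega\in V_\bC$ together with the identity $\overline{(\omega,\mu)}=(\overline\omega,\overline\mu)$, the operator $\pi^\bullet(\omega)$ acts with generalized eigenvalue $(\omega,\overline\mu)$. Hence $(X^h)_\lambda\cong (X_{\overline\lambda})^h$, which yields $\Omega(X^h)=\overline{\Omega(X)}$.

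Combining these, a nonzero $\bullet$-invariant sesquilinear form on $X$ exists if and only if $X\cong X^h$, which by Proposition \ref{p:weightsunique} holds if and only if $\Omega(X)=\Omega(X^h)=\overline{\Omega(X)}$. To upgrade the resulting sesquilinear form $(\cdot,\cdot)$ to a hermitian one, observe that $(v,w)':=\overline{(w,v)}$ is again a $\bullet$-invariant sesquilinear form on $X$. By Schur's lemma applied to the one-dimensional space $\Hom_\bH(X,X^h)$, we have $(\cdot,\cdot)'=c(\cdot,\cdot)$ for some $c\in\bC$; since the operation $(\cdot,\cdot)\mapsto(\cdot,\cdot)'$ is antilinear and involutive, $|c|=1$, and rescaling the form by an appropriate phase produces a hermitian form. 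The ``In particular'' assertion is then immediate: if $\cc(X)\in V^\vee$ is real, then $\Omega(X)\subseteq W\cdot\cc(X)\subset V^\vee$ is fixed by complex conjugation.

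The main substantive step is the weight computation for $X^h$; this is the only place where the specific form of $\bullet$ on $\bA$ enters the argument. Once it is in hand, Proposition \ref{p:weightsunique} together with the standard Schur-type rescaling argument closes the proof.
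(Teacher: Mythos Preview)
Your proof is correct and follows essentially the same route as the paper: compute $\Omega(X^h)=\overline{\Omega(X)}$ from $\omega^\bullet=\overline\omega$, then invoke Proposition \ref{p:weightsunique} to conclude $X\cong X^h$ precisely when $\overline{\Omega(X)}=\Omega(X)$. You supply more detail than the paper does; in particular, your Schur-type rescaling argument to upgrade a nonzero $\bullet$-invariant sesquilinear form to a genuine hermitian one is a point the paper leaves implicit.
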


\begin{proof}
Since $a^\bullet=\overline a$ for all $a\in S(V_\bC)$, we have
$\Omega(X^\bullet)=\overline{\Omega(X)},$ where $X^\bullet$ is the
$\bullet$-hermitian dual of $X$. The claim follows at once from Proposition \ref{p:weightsunique}.
\end{proof}

\subsection{Linear independence} Proposition \ref{p:weightsunique} says that
$\Theta_\bA(X)$ uniquely determines $X$. We now prove the stronger
statement that $\{\Theta_\bA(X)\}$ is linearly independent.

\begin{lemma}\label{l:inducedweights}
Suppose $\lambda$ is a weight of the irreducible tempered $\bH_M^0$-module
$\sigma$ and $X(\sigma,\nu)$ is a standard Langlands induced module. Then
$$m[\lambda+\nu: L(\sigma,\nu)]=m[\lambda+\nu: X(\sigma,\nu)]=m[\lambda:\sigma].$$
\end{lemma}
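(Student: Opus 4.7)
The plan is to analyze both equalities through the length filtration of $X(\sigma,\nu)$ induced by the basis $\{t_x\otimes v_i : x\in\C J_M,\ v_i\in\C B(U_\sigma)\}$. Set
\[
\C F_k X(\sigma,\nu)\;=\;\bigoplus_{\substack{x\in\C J_M\\ \ell(x)\le k}} t_x\otimes U_\sigma.
\]
By Lemma \ref{l:action-induced}, the ``correction'' terms in $\pi(\omega)(t_x\otimes v)$ involve $t_{c(s_\beta x)}$ with $\ell(c(s_\beta x))\le\ell(s_\beta x)<\ell(x)$, so each $\C F_k$ is $\bA$-stable and the $\bA$-action on the successive quotient $\C F_k/\C F_{k-1}$ is diagonal, sending $t_x\otimes v\mapsto t_x\otimes\sigma(x^{-1}\omega)v$. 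The weights of $\C F_k/\C F_{k-1}$ are therefore precisely $\{x(\mu+\nu):\mu\in\Omega(\sigma),\ \ell(x)=k\}$.

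The crucial first step is to show that for $x\in\C J_M$ and $\mu\in\Omega(\sigma)$, an equality $x(\mu+\nu)=\lambda+\nu$ forces $x=1$ (and $\mu=\lambda$). This is the standard Langlands uniqueness argument: since $\sigma$ is tempered, Casselman's criterion applied inside $V_M^\vee$ gives $\re\mu=-\sum_{i\in\Pi_M}c_i\al_i^\vee$ with $c_i\ge 0$, while $(\re\nu,\al)>0$ for $\al\in\Pi\setminus\Pi_M$ expresses $\re\nu=\sum_{j\notin\Pi_M}d_j\om_j^\vee$ with $d_j>0$. Hence $\re(\mu+\nu)\in S_{\Pi_M}$ with $(\re(\mu+\nu))^0=\re\nu$, and the stabilizer of $\re\nu$ in $W$ is $W_M$; an application of Evens' Lemma 2.4 (\cite{Ev}, quoted just above the statement) then forces $x\in W_M$, whence $x=1$ because $x\in\C J_M$. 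Feeding this through the filtration, every vector in $X(\sigma,\nu)_{\lambda+\nu}$ must actually lie in $\C F_0=t_1\otimes U_\sigma$, where $\bA$ acts literally by $\sigma\otimes\bC_\nu$. Thus
\[
X(\sigma,\nu)_{\lambda+\nu}\;=\;t_1\otimes (U_\sigma)_\lambda,\qquad m[\lambda+\nu:X(\sigma,\nu)]\;=\;m[\lambda:\sigma].
\]

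For the first equality, let $N$ be the unique maximal proper submodule, so $L(\sigma,\nu)=X(\sigma,\nu)/N$. I claim $N\cap \C F_0=0$, which by the previous paragraph immediately gives $N_{\lambda+\nu}=0$ and hence $m[\lambda+\nu:L(\sigma,\nu)]=m[\lambda+\nu:X(\sigma,\nu)]$. If $0\ne v=t_1\otimes w\in\C F_0\cap N$, irreducibility of $\sigma$ over $\bH_M$ gives $\bH_M\cdot v=t_1\otimes U_\sigma=\C F_0\subset N$, and since $\C F_0$ generates $X(\sigma,\nu)$ over $\bH$ by construction, $N=X(\sigma,\nu)$, contradicting properness. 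The main obstacle in this strategy is the Langlands-type step separating the weights at $x=1$ from those at $x\ne 1$; but this is precisely the content of the Langlands lemma/Evens lemma already available in the paper, so the argument reduces to a clean bookkeeping of the filtration $\C F_\bullet$.
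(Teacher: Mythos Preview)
Your overall architecture is sound and close in spirit to the paper's: a length filtration to identify the $\bA$-weights of $X(\sigma,\nu)$ (in place of the paper's citation of \cite[Proposition 6.4]{BM2}), a Langlands-type separation showing that $\lambda+\nu$ can only occur in the bottom piece, and then a passage to $L(\sigma,\nu)$. Your argument for the first equality via $N\cap\C F_0=0$ is correct and in fact a bit cleaner than the paper's, which instead observes directly that $\sigma\otimes\bC_\nu$ sits inside $L(\sigma,\nu)|_{\bH_M}$ to get $m[\lambda+\nu:L]\ge m[\lambda:\sigma]$ and then sandwiches.

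There is, however, a genuine gap at the separation step. You assert that from $\re(\mu+\nu),\re(\lambda+\nu)\in S_{\Pi_M}$ with common $0$-part $\re\nu$ and $\mathrm{Stab}_W(\re\nu)=W_M$, an application of Evens' Lemma 2.4 forces $x\in W_M$. But Lemma~2.4 is only the monotonicity statement $v_1\ge v_2\Rightarrow v_1^0\ge v_2^0$; it says nothing about how the $0$-part behaves under an arbitrary $x\in W$, and indeed $(xv)^0\le v^0$ is \emph{false} in general (take $v$ antidominant and $x=w_0$). So the inference ``$x\re(\mu+\nu)=\re(\lambda+\nu)$ and both have $0$-part $\re\nu$'' $\Rightarrow$ ``$x\in W_M$'' is not justified by what you cite.

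The paper closes this gap by a direct inner-product computation (for a $W$-invariant inner product on $V^\vee$): writing $\re\mu=-\sum_{i\in\Pi_M}d_i\al_i^\vee$ with $d_i\ge 0$ and using that $x\in\C J_M$ sends each $\al_i^\vee$ ($i\in\Pi_M$) to a positive coroot, one gets $(x\re\mu,\re\nu)\le 0$; and since $\re\nu$ is dominant with stabilizer $W_M$, $(x\re\nu,\re\nu)<(\re\nu,\re\nu)$ for $x\in\C J_M\setminus\{1\}$. Hence $(x\re(\mu+\nu),\re\nu)<(\re\nu,\re\nu)=(\re(\lambda+\nu),\re\nu)$, contradicting $x(\mu+\nu)=\lambda+\nu$. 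This is exactly the ``standard Langlands uniqueness'' you allude to, but it is not a consequence of Lemma~2.4; you need to actually carry out this estimate (or an equivalent one). Once you replace the appeal to Lemma~2.4 by this computation, your proof is complete.
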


\begin{proof}
By the construction of the Langlands quotient $L(\sigma,\nu),$ the restriction of $L(\sigma,\nu)$ to
$\bH_M$ contains the $\bH_M$-module $\sigma\otimes \bC_\nu$, hence
$\Hom_{\bA}[\sigma\otimes \bC_\nu, L(\sigma,\nu)]\neq 0,$ and
therefore $m[\lambda+\nu:L(\sigma,\nu)]\ge
m[\lambda:\sigma].$ Thus, it is sufficient to prove that
$m[\lambda+\nu:X(\sigma,\nu)]=m[\lambda:\sigma].$ 

By \cite[Proposition 6.4]{BM2}, every weight in
$X(\sigma,\nu)/(\sigma\otimes \bC_\nu)$ is of the form
$w(\lambda+\nu),$ where $\lambda$ is a weight of $\sigma$, and $w\neq 1$
ranges over the set $\C J_M$ of minimal length representatives of $W/W_M.$

 We
claim that if $w\neq 1$ is such a representative, then
$w(\lambda+\nu)\neq \lambda'+\nu,$ for every $\lambda,\lambda'$
weights of $\sigma.$ Let $F\subset\{1,2,\dots,n\}$ be such that
$\lambda\in S_F.$ Then, as before, write $\re\lambda=-\sum_{i\in F} d_i\al_i^\vee$,
$d_i\ge 0$ and $\re\nu=\sum_{j\notin F}c_j\omega_j^\vee.$ 

Since $w\al_i^\vee\in R^{\vee,+}$, for all $i\in F,$ we have
$(w\re\lambda,\omega_j^\vee)\le 0$ for every $i\in F,$ $j\notin F.$ On
the other hand, $w\beta_j<\beta_j$ for $j\notin F,$ so
$(w\re\nu,\beta_j)<(\nu,\beta_j)$ for some $j\notin F.$ This implies that
$(w\re(\lambda+\nu),\re\nu)<(\re(\lambda+\nu),\re\nu)=(\re\nu,\re\nu).$

\end{proof}

\begin{theorem}\label{t:A-linindep}
The set $\{\Theta_{\bA}(X)\}$ where $X$ ranges over the set of
(isomorphism classes of) simple $\bH$-modules is $\bZ$-linearly independent.
\end{theorem}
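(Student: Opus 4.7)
I argue by strong induction on the rank $|\Pi|$, mirroring the inductive structure of Proposition~\ref{p:weightsunique} while promoting that support-level argument to a multiplicity-level one. The base case $|\Pi|=0$ is trivial.

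Assume the result for strictly smaller ranks, and suppose for contradiction we have a nontrivial $\bZ$-relation $\sum_{i=1}^N c_i\Theta_\bA(X_i)=0$ with pairwise non-isomorphic simple $\bH$-modules $X_i$ and $c_i\neq 0$. Since $\Omega(X_i)\subset W\cdot \cc(X_i)$ and distinct $W$-orbits in $V_\bC^\vee$ are disjoint, I may assume a common central character. Write $X_i=L(\sigma_i,\nu_i)$ via Theorem~\ref{t:Langlands-quot}, so that the Langlands triples $(M_i,\sigma_i,\nu_i)$ are pairwise distinct.

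The main case is when some $\nu_i\neq 0$. Choose $\nu^*\neq 0$ maximal in $\{\re\nu_i\}_i$ with respect to the partial order $\ge$, and set $I^*=\{i:\re\nu_i=\nu^*\}$; since $F(\re\nu_i)=\Pi_{M_i}$, all $X_i$ with $i\in I^*$ share a common proper parabolic $M_*\subsetneq\Pi$. The core step is to restrict the relation to a region of weights in which only the modules with $i\in I^*$ contribute, and for which the contribution of each $X_i$ ($i\in I^*$) is exactly $e^{\nu_i}\Theta_\bA(\sigma_i)$. I plan to carve out this region using a combination of two inputs: (a) the strict inner product inequality $(w\re(\lambda_0+\nu_i),\re\nu_i)<(\re\nu_i,\re\nu_i)$ for $w\ne 1$ from the proof of Lemma~\ref{l:inducedweights}, which eliminates the $w\neq 1$ terms for $i\in I^*$; and (b) the Langlands-type bound on the ``dominant part'' $\re\mu^0$ (cf.~\cite[Lemma 2.4]{Ev}) together with maximality of $\nu^*$, which forces $\re\nu_j\geq\nu^*$ (hence $j\in I^*$) for any $X_j$ contributing to the region. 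Once this clean separation is established, formula~(\ref{weights-sigma}) and Lemma~\ref{l:inducedweights} identify the contribution of each $X_i$ ($i\in I^*$) as $e^{\nu_i}\Theta_\bA(\sigma_i)$. Since the affine supports $\nu_i+V_{M_*}^\vee$ are pairwise disjoint for distinct values of $\nu_i\in V_{M_*}^{\vee,\perp}$, grouping by the value of $\nu_i$ yields, for each $\nu$,
\[
\sum_{i\in I^*,\,\nu_i=\nu} c_i\,\Theta_\bA(\sigma_i)=0.
\]
The $\sigma_i$'s on the left are pairwise distinct tempered simple modules over the strictly smaller-rank algebra $\bH_{M_*}^0$, so the inductive hypothesis forces each $c_i=0$, a contradiction.

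If instead every $\nu_i=0$, all $X_i$ are tempered. I apply the Iwahori-Matsumoto involution $\tau$: the relation becomes $\sum c_i\Theta_\bA(\tau(X_i))=0$. If any $\tau(X_i)$ is non-tempered, the main case applies to the twisted relation; otherwise Lemma~\ref{IMtempered} forces every $X_i\cong X(\chi)$ at an imaginary central character, collapsing the relation to the absurd $c_1\Theta_\bA(X_1)=0$ with $\Theta_\bA(X_1)\neq 0$. The principal obstacle is the separation argument in the main case: making precise the region of weights on which the restricted relation reduces cleanly to a relation among the $\Theta_\bA(\sigma_i)$'s, and in particular verifying that weights of the form $w(\lambda_0+\nu_j)$ with $w\neq 1$ or with $j\notin I^*$ do not contaminate the restriction. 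Once this is secured, the descent to tempered modules over the Levi $\bH_{M_*}^0$ is formal.
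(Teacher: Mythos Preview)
Your proposal is correct and follows essentially the same approach as the paper: induct on rank, isolate the simple modules with maximal Langlands parameter $\re\nu$, use Lemma~\ref{l:inducedweights} together with the Langlands bound $\re\mu^0\le\re\nu_j$ to reduce to a relation among the $\Theta_\bA(\sigma_i)$ over the Levi, and handle the all-tempered case via the Iwahori--Matsumoto involution. The only difference is cosmetic: the paper extracts the extremal $\nu$ from a maximal weight $\lambda$ of the combination (setting $\nu=\lambda^0$) rather than directly from the set $\{\re\nu_i\}$, and assumes real central character to avoid tracking real parts.
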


\begin{proof}
Let 
\begin{equation}\label{e:comb1}
\sum_i c_i \Theta_\bA(X_i)=0
\end{equation}
 be a finite linear combination of
$\bA$-characters, where $\{X_i\}$ are distinct simple modules. Without
loss of generality, we may assume that all $X_i$ have the same central
character and moreover, that the central character is real. By Langlands classification, each $X_i$ is the unique
irreducible quotient $L(M_i,\sigma,\nu_i)$ of an induced
$\bH\otimes_{\bH_{M_i}} (\sigma_i\otimes\bC_{\nu_i}).$ 

Find $\lambda$ a weight in the linear combination such that $\lambda$
is maximal with respect to $\ge$ and no other $\lambda'$ occuring in
the linear combination satisfies $(\lambda')^0>\lambda^0,$ with the notation as
in previous subsection.  There exists a unique $F=F(\lambda)$ such
that $\lambda\in S_F$, and write $\Pi_M$ for the subset of $\Pi$
corresponding to $F$, and
$\nu=\lambda|_{V_M^{\vee,\perp}}$ accordingly. Then $\nu=\lambda^0$. Let 
$\sigma_1,\dots,\sigma_k$ irreducible tempered $\bH_M^0$-modules
so that $L(M,\sigma_t,\nu)$ occurs in (\ref{e:comb1}). 

We claim that if $\lambda_j$ is any weight of a $\sigma_l$, $l=1,k,$
then every time  $\lambda_j+\nu$ occurs in (\ref{e:comb1}), it occurs
in a $\Theta_\bA(L(M,\sigma_t,\nu))$ for some $t=1,k.$ To see this, suppose $\lambda_j+\nu$
appears in $L(M',\sigma',\nu').$ Then there exists an extremal weight
$\lambda'$ such that $(\lambda_j+\nu)\le \lambda'$, but then $\nu=(\lambda_j+\nu)^0\le
(\lambda')^0,$ and by assumption $\nu=(\lambda')^0=\nu',$ $M=M'$ and
$\sigma'=\sigma_t$ for some $t$.

Combining this with Lemma \ref{l:inducedweights}, it follows that
(\ref{e:comb1}) implies
\begin{equation}
\sum_{j=1}^k c_j \Theta_\bA(\sigma_j)=0.
\end{equation}
If $\Pi_M\neq \Pi$, we get $c_j=0$ by induction and continue the same
process with the remaining terms in (\ref{e:comb1}). If $\Pi_M=\Pi$,
i.e., the combination involves only $\bA$-characters of irreducible
tempered $\bH$-modules, apply the Iwahori-Matsumoto involution and
conclude as in the proof of Proposition \ref{p:weightsunique}.

\end{proof}

\section{Signature of hermitian forms and lowest $W$-types}

In order to study the signature of $\bullet$-invariant forms, we need
to construct explicitly the forms whose existence is guaranteed by
Corollary \ref{c:realbullet}. We use Langlands classification together
with the explicit induced forms from section \ref{sec:symmetry}. To
conclude certain results about signatures, we also make use of the
geometric classification of $\bH$-modules (for equal parameters).

\subsection{Tempered modules}\label{sec:2.9} 

\begin{lemma}\label{l:tempind}
Let $X$ be an irreducible tempered $\bH$-module. Then $X$
is a submodule of a parabolically induced module $I(M,\sigma\otimes
\bC_\nu)=\bH\otimes_{\bH_M}(\sigma\otimes\bC_\nu)$, where $\sigma$ is a discrete series module of
$\bH_M^0$ and $\nu\in (V_M^{\perp,\vee})_\bC$ with $\re\nu=0.$
\end{lemma}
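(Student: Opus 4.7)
My plan is to induct on the rank $|\Pi|$, extracting the Levi datum $(M,\sigma,\nu)$ from the extremal weight structure of $X$ and converting an $\bH_M$-quotient into an $\bH$-embedding via the second Frobenius reciprocity (Lemma~\ref{e:Frob2}). In the base case $X$ is discrete series, and $(\Pi_M,\sigma,\nu)=(\Pi,X,0)$ works tautologically.

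Suppose henceforth that $X$ is tempered but not discrete. By temperedness every $\re\lambda$ with $\lambda\in\Omega(X)$ is antidominant; write $\re\lambda=-\sum_{i}d_i(\lambda)\alpha_i^\vee$ with $d_i(\lambda)\ge 0$. I would pick $\lambda_0\in\Omega(X)$ maximal under $\ge$, chosen so that $\Pi_M:=\{\alpha_i:d_i(\lambda_0)>0\}$ is as small as possible. A combinatorial argument shows $\Pi_M\subsetneq\Pi$: otherwise every maximal weight would have full support, so any wall weight $\mu\in\Omega(X)$ (which exists since $X$ is not discrete) would sit below some maximal $\lambda'$, and the relation $\re\lambda'-\re\mu=\sum_{i}(d_i(\mu)-d_i(\lambda'))\alpha_i^\vee\in\bR_{\ge 0}\Phi^{\vee,+}$ would force $d_i(\mu)\ge d_i(\lambda')>0$ for every $i$, contradicting $\mu$ being on a wall. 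Decomposing $\lambda_0=\lambda_{0,M}+\nu$ under $V_{M,\bC}^\vee\oplus V_{M,\bC}^{\vee,\perp}$, the inclusion $\re\lambda_0\in V_M^\vee$ yields $\re\nu=0$.

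Next, let $Y\subset X|_{\bH_M}$ be the sum of generalised weight spaces $X_\mu$ for those $\mu\in\Omega(X)$ with $\mu|_{V_M^{\vee,\perp}}=\nu$. Since $W_M$ fixes $V_M^{\vee,\perp}$ pointwise and $S(V_M^\perp)$ is central in $\bH_M$, $Y$ is an $\bH_M$-direct summand of $X|_{\bH_M}$; under $\bH_M\cong\bH_M^0\otimes S(V_M^\perp)$ any irreducible quotient of $Y$ takes the form $U\otimes\bC_\nu$ with $U$ an irreducible $\bH_M^0$-module. Temperedness of $U$ follows from a standard trick: for each $\omega_M\in V_M^+$ one may pick $\omega^\perp\in V_M^\perp$ with $\omega_M+t\omega^\perp\in V^+$ for $t\gg 0$, and the orthogonality of $V_M^\perp$ and $V_M^\vee$ combined with $\re\mu\in V_M^\vee$ for the relevant $\mu$ gives $(\omega_M,\re\mu)=(\omega_M+t\omega^\perp,\re\mu)\le 0$ by temperedness of $X$. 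The inductive hypothesis applied to $U$ on $\bH_M^0$ (of strictly smaller rank $|\Pi_M|$) produces an embedding $U\hookrightarrow\bH_M^0\otimes_{\bH_{M_1}^0}(\sigma\otimes\bC_{\nu_1})$ with $\sigma$ discrete series of $\bH_{M_1}^0$ and $\re\nu_1=0$.

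Composing $X|_{\bH_M}\twoheadrightarrow Y\twoheadrightarrow U\otimes\bC_\nu$ with Lemma~\ref{e:Frob2} produces an $\bH$-embedding $X\hookrightarrow\bH\otimes_{\bH_{\delta(M)}}\Caa(U\otimes\bC_\nu)$ (see Definition~\ref{d:transfer}); exactness of parabolic induction and transitivity combine the inductive embedding of $U$ with this to give $X\hookrightarrow\bH\otimes_{\bH_{\delta(M_1)}}(\Caa(\sigma)\otimes\bC_{\Caa(\nu_1)+\Caa(\nu)})$, the required form after renaming. The main obstacle is the bookkeeping across the $\Caa$-transfer (conjugation by $w_M^0$) and the $\delta$-twist inherent in Lemma~\ref{e:Frob2}: the output parabolic is $\delta(M_1)$ rather than $M_1$, and one must verify that conjugation by $w_M^0$ preserves both the discrete series property of $\sigma$ and the vanishing of $\re\nu$.
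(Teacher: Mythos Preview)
Your argument is correct, but it is more circuitous than the paper's, and the ``main obstacle'' you flag at the end is entirely avoidable.  You in fact make the same choice of weight as the paper: a $\lambda_0$ with support set $\Pi_M=\{\alpha_i:d_i(\lambda_0)>0\}$ minimal (the extra maximality condition you impose is unnecessary).  The difference is that you then show only that the resulting $\bH_M^0$-module $U$ is \emph{tempered}, and invoke induction on $|\Pi|$, which forces you through the $\delta$-twist/$\Caa$-conjugation bookkeeping when you combine the inductive embedding with second Frobenius reciprocity.

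The paper instead argues directly that $U$ is already \emph{discrete series}, which eliminates the induction.  The point is this: if $\mu_M$ is any $\bA$-weight of $U$, then $\mu:=\mu_M+\nu\in\Omega(X)$ and, since $\re\nu=0$, one has $\re\mu=\re\mu_M=-\sum_{i\in\Pi_M}z_i\alpha_i^\vee$ with $z_i\ge 0$ by temperedness of $X$.  Hence the support of $\re\mu$ is contained in $\Pi_M$; \emph{minimality} of $\Pi_M$ then forces equality, i.e.\ $z_i>0$ for every $i\in\Pi_M$.  Thus $U$ is discrete series in one stroke, and the lemma follows from a single application of second Frobenius (which you already handle correctly).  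Your inductive detour does close---$(w_M^0)^{-1}$ sends $\Pi_M$ bijectively to $\Pi_{\delta(M)}$, so it carries $\Pi_{M_1}\subset\Pi_M$ to a standard parabolic of $\delta(M)$, and the discrete-series and $\re\nu=0$ conditions are preserved under this conjugation---but it buys nothing over the direct argument.
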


\begin{proof} Let $\om_i\in V$, $i=1,n,$  denote the fundamental
  weights of the root system. 
For every weight $\lambda\in\Omega(X)$, define 
\begin{equation}
\C F(\lambda)=\{i: (\om_i,\re\lambda)<0\}.
\end{equation}
Since $X$ is assumed tempered, we necessarily have
$(\om_j,\re\lambda)=0$ for all $j\notin \C F(\lambda).$ We assumed
that the root system is semisimple, therefore, 
\[\re\lambda=-\sum_{i\in\C F(\lambda)} d_i\al_i^\vee,\text{ where } d_i>0.\]
Choose now $\lambda\in \Omega(X)$ such that $\C F:=\C F(\lambda)$ is minimal with
respect to set inclusion. Set 
\[
\Pi_M=\{\al_i: i\in \C F\}.\]
Using the decomposition $V^\vee=V_M^\vee\oplus V_M^{\perp,\vee}$, let
$\nu$ be the projection of $\lambda$ onto $(V_M^{\perp,\vee})_\bC.$
Since $\re\lambda\in V_M^\vee$ by construction, it follows that
$\re\nu=0.$

Let $Y$ be an irreducible consituent of the
restriction of $X$ to $\bH_M$, such that $S((V_M^\perp)_\bC)$ acts on
$Y$ by $\nu.$ Then $Y\cong Y^0\otimes \bC_\nu$, where $Y^0$ is an
irreducible $\bH_M^\sem$-module. We claim that $Y^0$ is a discrete
series $\bH_M^0$-module. To see this, let $\mu\in (V_M^\vee)_\bC$
be a $S((V_M)_\bC)$-weight of $Y^0,$ write
$\re\mu=-\sum_{i\in \C F} z_i\al_i^\vee,$
and we want to prove that all $z_i>0.$ The sum $\mu+\nu$ is a weight
of $X$ and $\re(\mu+\nu)=\re(\mu)=-\sum_{i\in \C F}z_i\al_i^\vee,$ in
particular, $z_i\ge 0.$ Notice that if $j\notin \C F,$ then $j\notin
\C F(\mu+\nu),$ hence $\C F(\mu+\nu)\subseteq \C F.$ By the minimality
of $\C F,$ $\C F(\mu+\nu)=\C F,$ and therefore $z_i>0$ for all $i$.
Setting $\sigma=Y^0,$ the lemma is proved.

\end{proof}

The following statement is well-known.

\begin{proposition}\label{p:unittemp} Every irreducible tempered $\bH$-module is $\star$-unitary.
\end{proposition}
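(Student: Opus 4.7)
The plan is to reduce to discrete series via Lemma \ref{l:tempind} and then to invoke their $\star$-unitarity. By that lemma, the irreducible tempered module $X$ embeds as an $\bH$-submodule of the induced module $I(M,\sigma\otimes\bC_\nu)=\bH\otimes_{\bH_M}(\sigma\otimes\bC_\nu)$, where $\sigma$ is a discrete series $\bH_M^0$-module and $\nu\in(V_M^{\perp,\vee})_\bC$ with $\re\nu=0$. It thus suffices to establish \textbf{(a)} that $I(M,\sigma\otimes\bC_\nu)$ is $\star$-unitary whenever $\sigma$ is, and \textbf{(b)} that every discrete series module is $\star$-unitary.

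For (a), I would equip $I(M,\sigma\otimes\bC_\nu)$ with an explicit $\star$-invariant hermitian form using the $\star$-analogue of Proposition \ref{p:induced-bulletform} stated at the end of section \ref{sec:symmetry}. Working first on the dense open set of generic $\nu$ where Lemma \ref{l:basis} provides the basis $\{\C R_x\otimes v_i : x\in \C J_M,\ v_i\in\C B(U_\sigma)\}$, and applying the $\star$-analogue of Corollary \ref{c:reg-form} together with Lemma \ref{l:starbullet}(2), the form becomes block-diagonal in $x$, with the $x$-th block equal to a scalar multiple of the form on $\sigma$, the scalar being a product of factors of the shape $(k_\al-\al(\nu))/(k_\al+\al(\nu))$. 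For $\re\nu=0$, each $\al(\nu)$ is purely imaginary, so $k_\al-\al(\nu)=\overline{k_\al+\al(\nu)}$ and every such factor has modulus one; a careful accounting of the overall common scalar (in the normalization of (\ref{e:normalized})) allows one to reduce it to $1$. The induced form is therefore positive definite on each block whenever the form on $\sigma$ is, and the assertion extends by continuity from generic $\nu$ to all purely imaginary $\nu$. Since $I(M,\sigma\otimes\bC_\nu)$ is then $\star$-unitary, it is in particular $\bH$-semisimple, so $X$ is an orthogonal direct summand and inherits a positive definite $\star$-invariant form by restriction.

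The genuine obstacle is (b). This is a classical fact that I would cite rather than reprove: Opdam's Plancherel formula for $\bH_M^0$ exhibits each discrete series as a summand of a natural Hilbert space of matrix coefficients and so the $\star$-form is positive definite by construction. For $\bH$ of geometric type with equal parameters one can alternatively appeal to the Kazhdan--Lusztig geometric realization in which discrete series are constructed on top cohomology of Springer fibers carrying a canonical positive pairing. Either way, unitarity of discrete series is the one nontrivial ingredient needed to close the induction; everything else is bookkeeping with the induced form formulas of section \ref{sec:symmetry}.
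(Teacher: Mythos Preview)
Your overall strategy is the standard one underlying the results the paper cites: reduce to discrete series via Lemma \ref{l:tempind}, invoke $\star$-unitarity of discrete series, and then pass back up by unitary induction. The paper itself does not carry this out; its sketch simply appeals to \cite{BM1} in the geometric case and to Opdam \cite{O} together with Lusztig's reduction \cite{L1} in general. So in spirit your route is the same as what sits behind those citations, just made more explicit.

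That said, your argument for (a) has a gap. In Corollary \ref{c:reg-form} the $x$-th diagonal block is not a scalar multiple of $\langle\,\cdot\,,\,\cdot\,\rangle_{\sigma}$: it is
\[
\left\langle \sigma\!\left(\prod_{\al>0,\ x\al<0}\frac{\al-k_\al}{\al+k_\al}\right)v_1,\,v_2\right\rangle_{\sigma},
\]
and for $\al\in R^+\setminus R_M^+$ the operator $\sigma(\al)$ on $U_{\sigma_0}\otimes\bC_\nu$ is $\sigma_0(\al_M)+(\al,\nu)\cdot\Id$, with $\al_M$ the $V_M$-component of $\al$. This is genuinely an operator, not the scalar $(\al,\nu)$, so ``each factor has modulus one'' does not apply and positivity does not follow from your computation. (Your description matches only the case $\sigma_0=\triv$, i.e.\ Remark \ref{r:Opdam}.) Moreover, the form you chose---the one involving $R^0$ from the $\star$-analogue of Proposition \ref{p:induced-bulletform}---is the \emph{Langlands} form; it is the one that degenerates and whose radical cuts out the Langlands quotient, so it is not the natural candidate for proving positivity on the full induced module.

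The clean way to do (a) bypasses $R^0$ entirely. Using Proposition \ref{p:stareps}, the pairing
\[
\langle h_1\otimes v_1,\,h_2\otimes v_2\rangle_\star
:=\langle \sigma(\ep_M(h_2^\star h_1))\,v_1,\,v_2\rangle_{\sigma,\star_M}
\]
is well defined, sesquilinear, hermitian, and $\star$-invariant. In the basis $\{t_x\otimes v_i:x\in\C J_M\}$ it is block-diagonal with each block equal to $\langle\,\cdot\,,\,\cdot\,\rangle_{\sigma,\star_M}$ itself, since $\ep_M(t_{y^{-1}}t_x)=\delta_{x,y}$ for $x,y\in\C J_M$. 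Hence $I(M,\sigma\otimes\bC_\nu)$ is $\star$-unitary whenever $\sigma$ is $\star_M$-unitary, for \emph{every} $\nu$ with $\re\nu=0$ (not just generic ones), and your semisimplicity/restriction step then finishes. Your treatment of (b) is appropriate: $\star$-unitarity of discrete series is the substantive input and is exactly what the paper's citations supply.
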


\begin{proof}[Sketch of proof]
When the Hecke algebra $\bH$ appears in the representation theory of $p$-adic groups (i.e., it is ``geometric type'' in the sense of Lusztig \cite{L2}), the claim follows from the unitarizability of tempered representations of the $p$-adic group, see for example \cite{BM1}.

For Hecke algebras with arbitrary positive parameters, the statement is  known from \cite{O} in the setting of affine Hecke algebras, together with the fact that Lusztig's reduction from affine to graded affine Hecke algebras \cite{L1} preserves temperedness and unitarity.
\end{proof}

\begin{corollary}\label{c:bullettemp} Every irreducible tempered
  $\bH$-module with real central character admits a
  $\bullet$-invariant hermitian form.
\end{corollary}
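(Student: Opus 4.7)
The plan is to obtain this statement as an immediate specialization of Corollary \ref{c:realbullet}. Temperedness is not actually used in producing the form itself; it is presumably retained here to foreshadow combining the form with Proposition \ref{p:unittemp} and the lowest $W$-type analysis in the upcoming Corollary \ref{c:lwt}. What remains is to verify the $\bA$-weight hypothesis $\overline{\Omega(X)} = \Omega(X)$ of that earlier corollary.

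First I would unpack the assumption of real central character: $\cc(X)$ admits a representative lying in the real form $V^\vee \subset V_\bC^\vee$. The Weyl group $W$ acts on $V^\vee$ preserving this real structure, so the entire orbit $W \cdot \cc(X)$ is contained in $V^\vee$. Since every $\bA$-weight of $X$ satisfies $\Omega(X) \subset W \cdot \cc(X)$, each $\lambda \in \Omega(X)$ is real, i.e., fixed by the complex conjugation on $V_\bC^\vee$ induced from $V^\vee$. Consequently $\overline{\Omega(X)} = \Omega(X)$, and Corollary \ref{c:realbullet} directly yields a nondegenerate $\bullet$-invariant hermitian form on $X$.

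There is no real obstacle in this argument; the claim is essentially a restatement of the ``in particular'' clause of Corollary \ref{c:realbullet}. A more constructive alternative, which the authors may prefer given the emphasis on explicit induced forms in Section \ref{sec:symmetry}, would use Lemma \ref{l:tempind} to embed $X$ into a parabolically induced module $I(M, \sigma \otimes \bC_\nu)$ with $\sigma$ a discrete series $\bH_M^0$-module and $\re \nu = 0$. Matching central characters would then force $\nu = 0$, since discrete series have real central character and the condition $\re \nu = 0$ combined with a real overall central character leaves no room for an imaginary part in $\nu$. One could then transport a $\bullet_M$-form on $\sigma$ through Proposition \ref{p:induced-bulletform} and Theorem \ref{t:induced-bulletform} to obtain a $\bullet$-form on $I(M, \sigma)$. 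The delicate point in that route would be showing the induced form restricts nondegenerately to the irreducible submodule $X$, which the direct argument via Corollary \ref{c:realbullet} sidesteps entirely.
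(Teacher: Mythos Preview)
Your argument is correct: the statement is indeed an immediate consequence of the ``in particular'' clause of Corollary~\ref{c:realbullet}, and temperedness plays no role in the existence of the form. This is a genuinely different route from the paper's.

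The paper instead argues as follows. By Proposition~\ref{p:unittemp}, every irreducible tempered module $X$ carries a positive definite $\star$-form. If one knows that $X^\delta\cong X$, then the relation $\langle x,y\rangle_\bullet=\langle \pi(t_{w_0})x,\delta(y)\rangle_\star$ (inverse to Lemma~\ref{l:star-bullet}) produces a $\bullet$-form directly from the $\star$-form. The paper then verifies $X^\delta\cong X$ for tempered $X$ with real central character by observing that $X^\delta$ is again tempered with $X^\delta|_W\cong X|_W$, and invoking the linear independence over~$W$ of tempered modules with real central character (from \cite{So} or the geometric classification). What this buys: it exhibits the $\bullet$-form explicitly in terms of the known positive definite $\star$-form, and it records the auxiliary fact $X^\delta\cong X$, which feeds into the later comparison of $\star$- and $\bullet$-signatures on $W$-isotypic components (Lemma~\ref{l:star-bullet-types}, Remark~\ref{lwt-star}). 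Your route is cleaner and self-contained (it needs only Proposition~\ref{p:weightsunique}, proved internally), but it yields only abstract existence via $X\cong X^\bullet$, without the explicit link to the $\star$-form.
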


\begin{proof}
Let $X$ be an irreducible tempered module. If $X^\delta\cong X,$ then we
can define a $\bullet$-hermitian form, using the $*$-hermitian form
$\langle~,~\rangle_*$ from Proposition \ref{p:unittemp}, as before, by
setting $\langle
x,y\rangle_\bullet=\langle \pi(t_{w_0}) x, \delta(y)\rangle_*,$ $x,y\in
X.$
We claim that $X^\delta\cong X$ for every irreducible tempered
$\bH$-module with real infinitesimal character. For this, we use
that the restriction to $W$ of the set of tempered modules with real
central character is linearly independent in the Grothendieck group of
$W$. When the parameter function $k$ of $\bH$ is geometric in the
sense of Lusztig, this (and more) follows from the geometric
classification, see section \ref{sec:LWT}. For arbitrary positive
parameters $k$, this result is proved in \cite{So}. 

If $X$ is irreducible tempered with real central character, then $
X^\delta$ is also tempered. This is because $\Omega(X^\delta)=-w_0 (\Omega(X))$,
and if $\om_j$ is a fundamental weight, then so is $-w_0(\omega_j)$,
hence the non-positivity conditions for weights are preserved. 

Also $X|_W\cong X^\delta|_W.$ By the
$W$-linear independence mentioned above, $X\cong X^\delta,$ as $\bH$-modules.
\end{proof}

\subsection{Signature at infinity} 
Assume $\langle\nu,\al\rangle >0$ for all $\al\in R^+\setminus R_M^+$ and denote
$$\sigma_t=\sigma_0\otimes\bC_{t\nu},\ t>0.$$
We
consider the signature of the form on $X(M,\sig_t)$ as
$t\to\infty.$ We can use the basis $\{\C R_x\otimes v_i\},$ so that the
form is block-diagonal with respect to $x\in\C J_M.$ 
By Corollary \ref{c:reg-form}, in the diagonal block (of the normalized form) for $\C R_x$, we have
\[
\langle \C R_x\otimes v_1,\C R_x\otimes v_2\rangle_{\bullet,t}=\left\langle \sigma_t\left(\prod_{\al>0,x\al<0}\frac{\al-k_\al}{\al+k_\al}\right)v_1,v_2\right\rangle_{\sigma_t,\bullet}.
\]

As $t\to\infty,$ the expression $\sigma_t\left(\prod_{x\al<0}\frac{\al-k_\al}{\al+k_\al}\right)$ goes to the identity, which means that
\begin{equation}
\lim_{t\to \infty}\langle \C R_x\otimes v_1,\C R_x\otimes v_2\rangle_{\bullet,t}=\langle v_1,v_2\rangle_{\sigma_0,\bullet}.
\end{equation}
We have proved
\begin{theorem}\label{t:signature} The $\bullet-$signature of $X(M,\sig_t)$ at $\infty$ is the induced
signature of the $\bullet-$signature of $(\sig_0,U_{\sig_0}).$ 

\end{theorem}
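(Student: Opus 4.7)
My plan is to combine Lemma \ref{l:basis} and Corollary \ref{c:reg-form} with a direct asymptotic analysis in $t$, and then invoke local constancy of the signature of a nondegenerate Hermitian form. The hypothesis that $\langle \nu,\alpha\rangle > 0$ for every $\alpha \in R^+\setminus R_M^+$ forces $t\nu \in V^{\perp,\vee}_{M,\reg}$ for every $t>0$, so Corollary \ref{c:reg-form} applies throughout the one-parameter family $\sigma_t = \sigma_0\otimes\bC_{t\nu}$. In the normalization (\ref{e:normalized}), the $\bullet$-form on $X(M,\sigma_t)$ is block-diagonal in the basis $\{\mathcal R_x\otimes v_i\}_{x\in\mathcal J_M}$, with $(x,x)$-block
\[
\langle \mathcal R_x\otimes v_1,\mathcal R_x\otimes v_2\rangle_{\bullet,t} = \Bigl\langle \sigma_t\Bigl(\prod_{\sbst{\alpha>0}{x\alpha<0}}\frac{\alpha-k_\alpha}{\alpha+k_\alpha}\Bigr)v_1,v_2\Bigr\rangle_{\sigma_t,\bullet}.
\]

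The main step is to let $t\to\infty$ inside each block. Since $x\in\mathcal J_M$ is minimal in its coset, every $\alpha$ appearing above lies in $R^+\setminus R_M^+$, so the positivity hypothesis on $\nu$ is available. Decomposing $\alpha=\alpha_M+\alpha^\perp$ along $V=V_M\oplus V_M^\perp$ and using that $\nu$ vanishes on $V_M$, I would compute
\[
\sigma_t(\alpha) = \sigma_0(\alpha_M) + t\langle\alpha,\nu\rangle\cdot \Id \quad\text{on } U_{\sigma_0}.
\]
The operator $\sigma_0(\alpha_M)$ is a fixed bounded endomorphism, while $t\langle\alpha,\nu\rangle$ is positive and grows without bound, so $\sigma_t(\alpha\pm k_\alpha)$ is invertible for $t\gg 0$ and
\[
\sigma_t\!\left(\frac{\alpha-k_\alpha}{\alpha+k_\alpha}\right) = \Id - 2k_\alpha\bigl(\sigma_t(\alpha)+k_\alpha\bigr)^{-1} \longrightarrow \Id \quad (t\to\infty)
\]
in operator norm. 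The finite product over the relevant roots then also tends to $\Id$, hence
\[
\lim_{t\to\infty}\langle \mathcal R_x\otimes v_1,\mathcal R_x\otimes v_2\rangle_{\bullet,t} = \langle v_1,v_2\rangle_{\sigma_0,\bullet}.
\]

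To conclude, I would observe that for $t\gg 0$ the form on $X(M,\sigma_t)$ is nondegenerate and its signature equals that of the block-diagonal sum $\bigoplus_{x\in\mathcal J_M}\langle\cdot,\cdot\rangle_{\sigma_0,\bullet}$. Under the identification $X(M,\sigma_t)|_W\cong\Ind_{W_M}^{W}(\sigma|_{W_M})$, this block-diagonal assembly is precisely the signature induced from $(\sigma_0,U_{\sigma_0},\langle\cdot,\cdot\rangle_{\sigma_0,\bullet})$. Since the signature of a nondegenerate Hermitian form is locally constant in its parameter, the limiting value is attained for all sufficiently large $t$, proving the theorem. I expect the hardest part to be nothing deep—just the bookkeeping that the scalar dominance by $t\langle\alpha,\nu\rangle\cdot\Id$ truly yields norm convergence of the rational operator expression, together with the identification of the limiting block sum with the induced signature on the $W$-module structure; the heavy conceptual lifting is done by Corollary \ref{c:reg-form}.
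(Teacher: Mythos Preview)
Your proof is correct and follows essentially the same approach as the paper: invoke Corollary \ref{c:reg-form} to get the block-diagonal form in the basis $\{\mathcal R_x\otimes v_i\}$, then let $t\to\infty$ so that each operator $\sigma_t\bigl(\prod_{\alpha>0,\,x\alpha<0}\frac{\alpha-k_\alpha}{\alpha+k_\alpha}\bigr)$ tends to the identity. The paper's argument is terser, but you have supplied exactly the details (the decomposition $\alpha=\alpha_M+\alpha^\perp$, the scalar dominance by $t\langle\alpha,\nu\rangle$, and the observation that $x\in\mathcal J_M$ forces the relevant roots into $R^+\setminus R_M^+$) that make the limit claim rigorous; the only minor overstatement is that $t\nu\in V^{\perp,\vee}_{M,\reg}$ need only hold for $t$ sufficiently large, which is all the argument requires.
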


{
\subsection{Lowest $W$-types}\label{sec:LWT}
 In this section, we assume that the
graded Hecke algebra $\bH$ has equal parameters. More generally,
analogous results hold whenever the parameters of $\bH$ are of
geometric type, in the sense of \cite{L2}.

Suppose $\bH$ is attached to a root system $\Psi$ and constant parameter
function $k.$ Let
$\fg$ be the reductive Lie algebra with root system
$\Psi$. In particular, we identify a Cartan subalgebra $\fh$
of $\fg$ with $V_\bC^\vee,$ so that the roots $R$ live in $\fh^*\cong
V_\bC.$ Let $\C N\subset\fg$ denote the nilpotent cone. Let $G$ be a
complex connected Lie group with Lie algebra $\fg$; for our purposes,
we may choose $G$ to be the adjoint form. If $\C S$ is a subset of
$\fg$, denote by $Z_G(\C S)$ the mutual centralizer in $G$ of the
elements in $\C S$ and $A(\C S)$ the group of components of $Z_G(\C S).$

We summarize the results
from \cite{KL,L2} that we need for signatures. 

One attaches a standard geometric $\bH$-module $X(s,e,\psi)$ to every triple 
\begin{equation}\label{e:geom-params}
(s,e,\psi),\quad s\in \fg\text{ semisimple},\ e\in\C N\text{ such that }
[s,e]=k e, \ \psi\in \widehat A(s,e)_0,
\end{equation}
where $\widehat A(s,e)_0$ is the set of irreducible representations of
$A(s,e)$ which appear in the permutation action on the top cohomology
$H^{\text{top}}(\C B_{e}^s,\bC)$. Here, $\C B_{e}^s$ denotes the variety of
Borel subalgebras of $\fg$ containing $e$ and $s.$ Morever,
\begin{equation}
X(s,e,\psi)\cong X(s',e',\psi')\text{ if and only if } g\cdot
(s,e,\psi)=(s',e',\psi'),\text{ for some }g\in G.
\end{equation}
Consequently, we may assume, without loss of generality, that
$s\in\fh.$ Under the identification $\fh=V_\bC^\vee,$ write
$s=s_0+\sqrt{-1} s_1$ with $s_0,s_1\in V^\vee.$

On the other hand, recall that the Springer correspondence realizes
every irreducible $W$-representation as the $\phi$-isotypic component
\begin{equation}
\mu(e,\phi):=\Hom_{A(e)}[\phi,H^{\text{top}}(\C B_e,\bC)]
\end{equation}
of the top cohomology group of the Springer fiber $\C B_e$. Denote by
$\widehat A(e)_0$ the set of irreducible representations of $A(e)$
which appear in the action on $H^{\text{top}}(\C B_e,\bC).$ Moreover
$\mu(e,\phi)\cong \mu(e',\phi')$ if and only if there exists
$g\in G$ such that $g\cdot (e,\phi)=(e',\phi').$

The inclusion $Z_G(s,e)\to Z_G(e)$ descends to an inclusion $A(s,e)\to
A(e).$ The standard module $X(s,e,\psi)$ has the property that
\begin{equation}
\Hom_W[\mu(e,\phi):X(s,e,\psi)]=\Hom_{A(s,e)}[\psi: \phi|_{A(s,e)}],
\end{equation}
for all $\phi\in\widehat A(e)_0.$
\begin{definition}
 We call $\mu(e,\phi)$ a lowest
$W$-type of $X(s,e,\psi)$ if $\Hom_{A(s,e)}[\psi:\phi|_{A(s,e)}]\neq
0.$ 
\end{definition}

\begin{theorem}[\cite{KL,L2}] \label{t:geom-class}\ 
\begin{enumerate}
\item The standard module $X(s,e,\psi)$ where
  $(s,e,\psi)$ is as in (\ref{e:geom-params}) has a unique composition
  factor $L(s,e,\psi)$ such that $L(s,e,\psi)$ contains every lowest
  $W$-type of $X(s,e,\psi)$ with full multiplicity
  $[\psi:\phi|_{A(s,e)}].$
\item The module $X(s,e,\psi)$ is tempered if and only if $s_0=k h$
  for a Lie triple $(e,h,f)$ of $e$. In this case,
  $X(s,e,\psi)=L(s,e,\psi).$ The module $X(s,e,\psi)$ is a discrete
  series if in addition $e$ is a distinguished nilpotent element.
\end{enumerate}
\end{theorem}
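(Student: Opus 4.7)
The plan is to invoke the geometric constructions of \cite{KL,L2} and translate them into the present framework, rather than rebuild the geometric picture from scratch. Recall that $X(s,e,\psi)$ is defined via an equivariant cohomology (or $K$-theory) construction on the Springer fiber $\C B_e^s$ of Borel subalgebras containing both $s$ and $e$. The $W$-module structure on $H^*(\C B_e^s,\bC)$ is compatible with the classical Springer action on $H^*(\C B_e,\bC)$, and the natural closed inclusion $\C B_e^s\hookrightarrow \C B_e$ induces a surjection on top cohomology that intertwines the $A(s,e)$-action on the source with the restriction of the $A(e)$-action on the target. This is the key geometric input that will drive both parts.

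For part (1), I would first record the $W\times A(s,e)$-decomposition of $H^{\text{top}}(\C B_e^s,\bC)$. Using the Springer correspondence and the surjection above, the multiplicity of $\mu(e,\phi)$ in $X(s,e,\psi)|_W$ equals $[\psi:\phi|_{A(s,e)}]$; this yields the multiplicity formula for every lowest $W$-type. Uniqueness of the subquotient $L(s,e,\psi)$ containing the lowest $W$-types follows from the closure-order argument of \cite{L2}: any other composition factor $L(s,e',\psi')$ of $X(s,e,\psi)$ has $e'$ strictly in the closure of the orbit of $e$, so its $W$-types are of the form $\mu(e',\phi')$ with $G\cdot e'$ strictly larger than $G\cdot e$, and such $W$-types cannot coincide with the Springer $W$-types attached to $e$. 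Hence all the lowest $W$-types of $X(s,e,\psi)$ must be concentrated in a single composition factor, which we christen $L(s,e,\psi)$.

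For part (2), I would apply Casselman's criterion (Definition \ref{d:tempered}) directly. The set $\Omega(X(s,e,\psi))$ is contained in $W\cdot s$ under the identification $\fh = V_\bC^\vee$, so the real parts of weights lie in $W\cdot s_0$. Temperedness amounts to $(\omega,w s_0)\le 0$ for all $w\in W$ and $\omega\in V^+$, i.e.\ $-s_0$ lies in the closure of the dominant chamber. Combining this with the compatibility $[s_0,e]=ke$ (taking real parts of $[s,e]=ke$) and the Jacobson--Morozov / Mal'cev uniqueness of the neutral element of an $\mathfrak{sl}_2$-triple through $e$, one deduces $s_0 = kh$ where $(e,h,f)$ is such a triple. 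Conversely, if $s_0=kh$, then $h$ is the neutral element of an $\mathfrak{sl}_2$-triple and automatically satisfies the required inequalities, so $X(s,e,\psi)$ is tempered. The identity $X(s,e,\psi)=L(s,e,\psi)$ in the tempered case then follows from the fact that there is no strictly larger nilpotent orbit contributing to the same central character with the same $A(s,e)$-data, together with the multiplicity formula from part (1). Finally, the discrete series condition corresponds to strict inequalities in Casselman's criterion, hence to $s_0 = kh$ with $h$ regular (no nontrivial centralizer in a proper Levi), which is exactly the distinguished condition on $e$.

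The main obstacle, and the reason I would ultimately cite \cite{KL,L2} rather than reprove the statement, is the identification of the multiplicities of lowest $W$-types with the branching numbers $[\psi:\phi|_{A(s,e)}]$: this requires the full geometric machinery (Deligne--Langlands parametrization, equivariant $K$-theory, and the Ginzburg--Lusztig convolution) that is beyond the scope of the present paper. The Casselman-criterion argument for part (2), however, is self-contained given the description of $\Omega(X(s,e,\psi))$.
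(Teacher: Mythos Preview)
The paper does not prove this theorem; it is quoted from \cite{KL,L2} as a summary of the geometric classification, with no argument supplied. Your instinct to ultimately cite rather than reprove therefore matches the paper exactly.

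That said, your sketch has two genuine slips. In part (1) the closure-order statement is self-contradictory: you write that any other composition factor $L(s,e',\psi')$ has $e'$ in the closure of $G\cdot e$, and then in the next clause that $G\cdot e'$ is strictly larger than $G\cdot e$. In the conventions of \cite{L2} (and as visible in the multiplicity tables later in this paper) the composition factors of $X(s,e,\psi)$ are the $L(s,e',\psi')$ with $\CO_e\subset\overline{\CO_{e'}}$, i.e.\ $e'$ lies in a \emph{larger} orbit; the Springer representations $\mu(e,\phi)$ attached to the minimal orbit $\CO_e$ then cannot occur in those larger-orbit factors, which is what singles out $L(s,e,\psi)$.

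In part (2) your Casselman argument is too coarse to work. You only have the containment $\Omega(X(s,e,\psi))\subset W\cdot s$, not equality, so temperedness does \emph{not} give $(\omega,ws_0)\le 0$ for all $w\in W$; indeed that condition for all $w$ would force $s_0=0$. The actual argument in \cite{KL,L2} needs the precise description of which $W$-translates of $s$ occur as weights (via the $T$-fixed points in $\C B_e^s$). Similarly, the step ``$[s_0,e]=ke$ and $-s_0$ antidominant implies $s_0=kh$'' does not follow from Jacobson--Morozov alone: there are many semisimple elements in the $k$-eigenspace of $\mathrm{ad}(e)^*$ besides $kh$, and isolating $kh$ requires the finer weight analysis. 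So while the outline is in the right spirit, part (2) is not self-contained as claimed.
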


Notice that, in particular, there is a one-to-one correspondence
between tempered $\bH$-modules with real central character and
($G$-conjugacy classes) of pairs $(e,\phi)$ where $e\in\C N$ and
$\phi\in \hat A(e)_0.$ 

According to the parabolic Langlands classification recalled in Theorem \ref{t:Langlands-quot}, for every
irreducible tempered $\bH_M^0$ module $\sigma_0$ and every $\nu\in
V_M^{\vee,\perp}$ such that $\nu$ is dominant, i.e., $(\al,\nu)>0$ for
all $\al\in \Pi\setminus\Pi_M,$ the standard parabolically induced
module
\begin{equation}
X(M,\sigma_0,\nu)=\bH\otimes_{\bH_M}(\sigma_0\otimes \bC_\nu),
\end{equation}
has a unique irreducible quotient $L(M,\sigma_0,\nu).$ 

The relation with the geometric classification is as follows. The
tempered $\bH_M^0$-module $\sigma_0$ is parameterized by a triple
$(s_M,e_M,\psi_M)$. Here $s_M\in (V_M^\vee)_\bC$, $e_M$ is a nilpotent
element in the corresponding Levi subalgebra $\fk m\subset \fg$ and
$\psi_M$ is a representation of $A_M(s_M,e_M)$. Set
\[s=s_M+\nu\in V_\bC=\fh,\ e=e_M.\]
Since $\nu$ commutes with $s_M$ and $e=e_M,$ $A_G(s,e)=A_G(s_M,e_M).$
The embedding $A_M(s_M,e_M)\to A_G(s_M,e_M)=A_G(s,e)$ induces a
surjection $\widehat A_G(s,e)\to \widehat A_M(s_M,e_M)$, and let
$\psi$ the pull-back of $\psi_M.$ Then
\begin{equation}
X(M,\sigma_0,\nu)\cong X(s,e,\psi) \text{ and } L(M,\sigma_0,\nu)\cong L(s,e,\psi).
\end{equation}
Thus, we may speak of the lowest $W$-types of $X(M,\sigma_0,\nu)$ (and
of $L(M,\sigma_0,\nu).$ 
Denote by $\LWT(M,\sigma_0)$ the set of lowest $W$-types of
$X(M,\sigma,\nu)$.  Since $A_G(s,e)$, $s=s_M+\nu$ does
not change for all dominant $\nu$, this set does not change with
$\nu$, hence the notation. Then Theorem \ref{t:geom-class}(1) implies that $L(M,\sigma_0,\nu)$
contains all the $W$-types in $\LWT(M,\sigma_0)$ with full multiplicity.

Moreover, let $\mu_0$ be the unique lowest $W_M$-type of $\sigma_0.$ For every $\mu\in \LWT(M,\sigma_0)$, we have
\begin{equation}
\Hom_{W}[\mu,X(M,\sigma_0,\nu)]=\Hom_{W(M)}[\mu|_{W(M)},\sigma_0]=\Hom_{W(M)}[\mu|_{W(M)},\mu_0].
\end{equation}
The form $\langle~,~\rangle_{\sigma_0,\bullet}$ can be normalized so that it is positive on the $\mu_0$-isotypic component. 
Then Theorem \ref{t:signature} implies the following corollary.

\begin{corollary}\label{c:lwt} 
The $\bullet$-signature  on the lowest $W-$types of the standard module
$X(M,\sig_0,\nu)$ is given by Theorem \ref{t:signature} for
\emph{all} dominant $\nu.$ In particular, the $\bullet$-form on $L(M,\sigma_0,\nu)$ can be
normalized so that the signature is positive definite on all lowest $W$-types.
\end{corollary}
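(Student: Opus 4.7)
The plan is to interpolate via the one-parameter family $X(M,\sig_0,t\nu)$, $t\in(0,\infty)$, and apply a continuity argument that propagates the signature at $t=\infty$ computed in Theorem \ref{t:signature} down to all finite $t>0$.

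First, I would normalize $\langle~,~\rangle_{\sig_0,\bullet}$ so that it is positive definite on the lowest $W_M$-type isotypic component $\sig_0(\mu_0)$. Proposition \ref{p:induced-bulletform} and Theorem \ref{t:induced-bulletform} then produce an explicit $\bullet$-invariant hermitian form $\langle~,~\rangle_{\bullet,t}$ on $X(M,\sig_0,t\nu)$ for each $t>0$. Since the $W$-action on $X(M,\sig_0,t\nu)$ does not involve $\nu$, the $W$-isotypic subspaces $X(M,\sig_0,t\nu)(\mu)$ are the same vector subspaces for all $t$, and the Gram matrix of $\langle~,~\rangle_{\bullet,t}$ restricted to each of them is a rational function of $t$, block-diagonal with respect to $\mu$ by $W$-invariance.

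Next, by Theorem \ref{t:signature}, the signature of $\langle~,~\rangle_{\bullet,t}$ at $t=\infty$ equals the induced signature of $\langle~,~\rangle_{\sig_0,\bullet}$. For $\mu\in\LWT(M,\sig_0)$, the Frobenius-reciprocity identification $\Hom_W[\mu, X(M,\sig_0,t\nu)]\cong \Hom_{W_M}[\mu|_{W_M},\mu_0]$ recorded in the text preceding the corollary, combined with the chosen normalization, shows that the induced form is positive definite on $X(M,\sig_0,t\nu)(\mu)$ at infinity.

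The crux is to verify that the signature on $X(M,\sig_0,t\nu)(\mu)$ stays constant as $t$ moves from $\infty$ down to any $t_0>0$. Since the signature of a continuous family of hermitian Gram matrices is locally constant on the nondegeneracy locus, it suffices to show that $\langle~,~\rangle_{\bullet,t}$ is nondegenerate on $X(M,\sig_0,t\nu)(\mu)$ for every $t>0$ and every $\mu\in\LWT(M,\sig_0)$. The radical of the nonzero $\bullet$-form is a proper $\bH$-submodule, hence contained in the maximal proper submodule of $X(M,\sig_0,t\nu)$. By Theorem \ref{t:geom-class}(1), every lowest $W$-type $\mu$ appears in $L(M,\sig_0,t\nu)$ with the same multiplicity as in $X(M,\sig_0,t\nu)$, so the $\mu$-isotypic component of that maximal submodule vanishes. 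Therefore the $\mu$-component of the radical of the form is zero, and nondegeneracy on $X(M,\sig_0,t\nu)(\mu)$ follows. Assembling the pieces, the signature on each lowest $W$-type component is positive definite for all $t>0$, and the form descends isomorphically to the corresponding components of $L(M,\sig_0,\nu)$, yielding the second claim. The hard part is the nondegeneracy step, which crucially invokes the geometric-classification input of Theorem \ref{t:geom-class}(1) that lowest $W$-types survive with full multiplicity in the Langlands quotient.
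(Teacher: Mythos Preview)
Your argument is correct and is precisely the reasoning the paper has in mind; the paper does not give a separate proof but simply records the corollary as a consequence of Theorem \ref{t:signature} together with the full-multiplicity statement in Theorem \ref{t:geom-class}(1). You have made explicit the continuity/nondegeneracy step (the form on each lowest $W$-type isotypic component stays nondegenerate for all dominant $\nu$ because the radical, being a proper submodule, misses the lowest $W$-types), which is exactly the implicit content behind the paper's one-line deduction.
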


\begin{remark}\label{lwt-star}
Suppose $L=L(M,\sig_0,\nu)$ also carries a $\star$-invariant hermitian form. Since $\nu$ is real, this is the case precisely when $\delta(M)=M$ and $w_0\nu=-\nu.$ (We have $L(M,\sigma_0,\nu)^\delta=L(\delta(M),\sigma_0,\delta(\nu)).$) As in section \ref{sec:star-bullet}, in order to compare $\star$-signatures with $\bullet$-signatures, we need first to choose an isomorphism $\tau_L^\delta: L^\delta\to L.$ It is an empirical fact that always $L$ has one lowest $W$-type that appears with multiplicity $1$, and we normalize $\tau_L^\delta$ to be $+1$ on the isotypic space of this lowest $W$-type.

From Corollary \ref{c:lwt} and Lemma \ref{l:star-bullet-types}, we see that the $\star$-signature on each isotypic space $L(\mu)$ of a lowest $W$-type $\mu$ of $L(M,\sigma_0,\nu)$ is also independent of (dominant) $\nu$. Moreover,  this signature is given by the action of $t_{w_0}\circ\tau_L^\delta$ on $L(\mu).$ In particular, if $w_0$ is central in $W$ (so $\delta=1$) or if $\dim L(\mu)=1$, the $\star$-form can be normalized so that the $\star$-signature on $L(\mu)$ equals $$(-1)^{h(\mu)}\dim L(\mu),$$ where $h(\mu)$ is the lowest degree in which $\mu$ occurs in harmonic polynomials on $V$.
 
In fact, when the root system is simple, the only case when there exists a lowest $W$-type $\mu$ such that $\dim L(\mu)>1$ is as follows. The root system is of type $E_6$ and the standard module is $X(M,\sigma_0,\nu)$, where $M$ is of type $D_4$ and $\sigma_0$ is the subregular discrete series of $D_4.$ In Theorem \ref{t:geom-class}, this corresponds to a nilpotent element $e$ of type $D_4(a_1)$ in $E_6$, whose centralizer has component group $A(e)=S_3.$ The standard module $X(M,\sigma_0,\nu)$ has three lowest $W$-types denoted $80_s$, $90_s$, and $20_s$ with multiplicities $1$, $2$, and $1$, respectively. One can compute the $\star$-form on the two-dimensional isotypic component of $90_s$ and find that the signature is $(1,-1)$, cf. \cite[page 458]{Ci-e6}. 
\end{remark}

}

\section{Jantzen filtration and hermitian Kazhdan-Lusztig polynomials}
\label{sec:Jantzen-filtration}
\subsection{Jantzen filtration}
 We follow \cite[section 3]{Vo}. Let
$E$ be a complex vector space endowed with an analytic family
$\langle~,~\rangle_t$ of hermitian forms, such that
$\langle~,~\rangle_t$ are nondegenerate for $t\neq t_0$, close to $t_0$. The Jantzen filtration of $E$
(\cite{J}) is a filtration of vector subspaces
$$E=E_0\supset E_1\supset E_2\supset\dots\supset E_N=0,$$
defined as follows. For every $n\ge 0,$ $x\in E$ is in $E_n$ if and
only if there exists $\ep>0$ and a polynomial function
$f_x:(t_0-\ep,t_0+\ep)\to E$ with the properties:
\begin{enumerate}
\item[(i)] $f_x(t_0)=x$;
\item[(ii)] $\langle f_x(t),y\rangle_t$ vanishes at least to order $n$
  at $t=t_0.$
\end{enumerate}
Set 
\begin{equation}
\langle x,y\rangle^n=\lim_{t\to t_0} \frac 1{(t-t_0)^n} \langle f_x(t),f_y(t)\rangle_t;
\end{equation}
this definition is independent of $f_x,f_y.$

\begin{theorem}[{Jantzen \cite[5.1]{J}, cf. Vogan \cite[Theorem 3.2,
    Corollary 3.6]{Vo}}]
The pairing $\langle~,~\rangle^n$ is a hermitian form on $E_n$ with
radical $E_{n+1}$. In particular,
\begin{enumerate}
\item[(a)] $\text{Rad}\langle~,~\rangle_0=E_1$;
\item[(b)] $\langle~,~\rangle^n$ is a nondegenerate hermitian form on $E_n/E_{n+1}.$
\end{enumerate}
Suppose $(p_n,q_n)$ is the signature of $\langle~,~\rangle^n$ on
$E_n/E_{n+1}.$ If $(p^+,q^+)$ is the signature of
$\langle~,~\rangle_t$ for $t>t_0$ and $(p^-,q^-)$ is the signature of
$\langle~,~\rangle_t$ for $t<t_0$, then
\begin{enumerate}
\item[(c)] $p^+=p^-+\sum_{n \text{ odd}} p_n-\sum_{n \text{ odd}} q_n$
    and $q^+=q^-+\sum_{n \text{ odd}} q_n-\sum_{n \text{ odd}} p_n$.
\end{enumerate}
\end{theorem}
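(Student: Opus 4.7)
The plan is to establish all three parts simultaneously by producing an analytic basis of $E$ in which the Gram matrix of $\langle~,~\rangle_t$ becomes diagonal with monomial entries in $(t-t_0)$, and then to read off (a), (b), and (c) from that diagonalization.

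First, fix an analytic basis $\{e_i\}$ of $E$, so the form is encoded in a real-analytic Hermitian matrix $M(t)$ near $t_0$ with $\det M(t)\not\equiv 0$. The key lemma is a Hermitian analytic Smith normal form: there exist a real-analytic invertible $P(t)$, nonnegative integers $n_1\le \cdots\le n_d$, and real-analytic $u_i(t)$ with $u_i(t_0)\in\bR^\times$, such that
\[
P(t)^{*}M(t)P(t)=\mathrm{diag}\bigl((t-t_0)^{n_i}u_i(t)\bigr).
\]
This is proved by induction on $d$ over the DVR of analytic germs at $t_0$: pick a vector realizing the minimal order of vanishing of the quadratic form, use its Hermitian pairing with the other basis vectors to clear the first row and column analytically, and recurse on the orthogonal complement. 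Set $v_i(t)=P(t)e_i$, and let $\ovl{v_i}=v_i(t_0)\in E$.

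Next, identify the filtration. For any analytic $f_x(t)=\sum c_i(t)v_i(t)$ with $f_x(t_0)=x$, pairing against the analytic lift of an arbitrary $v_j(t_0)$ gives $\langle f_x(t),v_j(t)\rangle_t=\overline{c_j(t)}\,(t-t_0)^{n_j}u_j(t)$ (up to a complex conjugate normalization), which vanishes to order $\ge n$ at $t_0$ for every $j$ if and only if $c_j(t_0)=0$ whenever $n_j<n$. Hence
\[
E_n=\mathrm{span}\{\ovl{v_i}:n_i\ge n\},
\]
and the graded piece $E_n/E_{n+1}$ is spanned by the classes of $\{\ovl{v_i}:n_i=n\}$. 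Conversely, taking $f_{\ovl{v_i}}(t):=v_i(t)$ shows every such vector lies in $E_{n_i}$. In particular $E_1$ is the set of $x$ for which $\langle x,\cdot\rangle_0$ is identically zero, giving (a).

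On $E_n/E_{n+1}$, the pairing $\langle~,~\rangle^n$ is computed from the leading term of the diagonal Gram matrix: it becomes the real diagonal form with entries $u_i(t_0)$ indexed by $\{i:n_i=n\}$. Since all $u_i(t_0)\neq 0$, this form is nondegenerate, proving (b), and its signature is exactly $(p_n,q_n)$. For (c), for $t>t_0$ the sign of the $i$-th diagonal entry of the Gram matrix is $\mathrm{sgn}(u_i(t_0))$, while for $t<t_0$ it is $(-1)^{n_i}\mathrm{sgn}(u_i(t_0))$. Summing contributions: indices with $n_i$ even contribute identically to $p^\pm$ and to $q^\pm$; indices with $n_i$ odd contribute to the opposite sides. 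Collecting yields
\[
p^+-p^-=\sum_{n\text{ odd}}(p_n-q_n),\qquad q^+-q^-=\sum_{n\text{ odd}}(q_n-p_n),
\]
which is (c).

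The single nontrivial step is the Hermitian analytic Smith normal form: ordinary Smith normal form over the analytic-germ DVR is standard, but one must carry out the reduction by simultaneous row and column operations that are conjugate to one another so that Hermitian symmetry is preserved at every stage. This is accomplished by choosing at each step a pivot whose diagonal entry attains the minimum order of vanishing among all entries of the current block (such a diagonal pivot exists because the minimum over all entries of a Hermitian matrix is attained on the diagonal, by evaluating the form on $e_i\pm e_j$ and $e_i\pm\sqrt{-1}e_j$) and then using analytic Gaussian elimination with conjugate row and column operations. The rest of the argument is bookkeeping.
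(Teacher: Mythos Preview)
The paper does not prove this theorem; it is quoted with attribution to Jantzen \cite[5.1]{J} and Vogan \cite[Theorem 3.2, Corollary 3.6]{Vo}, and no argument is supplied. So there is no proof in the paper to compare against.

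That said, your argument is essentially the standard one (and is the approach Vogan takes): reduce to a diagonal Gram matrix with entries $(t-t_0)^{n_i}u_i(t)$ by a Hermitian analytic Smith normal form, then read off the filtration, the induced forms, and the signature jump. The identification $E_n=\mathrm{span}\{\ovl{v_i}:n_i\ge n\}$ and the computation of $\langle~,~\rangle^n$ as $\mathrm{diag}(u_i(t_0))_{n_i=n}$ are correct, and the pivot argument (forcing the minimal vanishing order onto the diagonal via $e_i\pm e_j$, $e_i\pm\sqrt{-1}e_j$) is the right way to keep Hermitian symmetry through the induction.

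One small point worth tightening: the paper's definition of $E_n$ asks for a \emph{polynomial} lift $f_x$ and pairs against \emph{fixed} vectors $y\in E$, whereas you use analytic lifts $v_i(t)$ and pair against the moving family $v_j(t)$. These conditions are equivalent (truncate the Taylor expansion to pass from analytic to polynomial; expand a fixed $y$ in the analytic frame $\{v_j(t)\}$ to pass between constant and moving test vectors), but you should say so explicitly, since otherwise the containment $E_n\subseteq\mathrm{span}\{\ovl{v_i}:n_i\ge n\}$ is not literally justified by the displayed computation.
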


Let $X=X(M,\sigma,\nu)$ be a standard module as in Theorem
\ref{t:Langlands-quot} with Langlands quotient $\overline
X=L(M,\sigma,\nu).$ Consider a polynomial in $t$ family of
parameters $\nu_t$, such that $\nu_1=\nu$ and $X_t=X(M,\sigma,\nu_t)$
is irreducible for $t\neq 1$ in some small interval centered at $1$.
Suppose $\sigma$ is a tempered module with real central character, and
$\nu$ is real. By Corollary \ref{c:bullettemp}, every $X_t$ admits a
$\bullet$-invariant nondegenerate form $\langle~,~\rangle_{t,\bullet}$
that we
assume, as we may by Corollary \ref{c:lwt}, to be positive definite
the lowest $W$-types of $X_t.$ Notice that the $W$-structure and
lowest $W$-types of $X_t$ are independent on $t$. Therefore, we may
think of the modules $X_t$ as being realized on the same vector space
$E$ with the analytic family of hermitian forms
$\langle~,~\rangle_{t,\bullet}$, and the previous discussion
applies. We have the Jantzen filtration of $X$:
\begin{equation}\label{e:Jantzen}
X=X_0\supset X_1\supset X_2\supset \dots\supset X_N=0,
\end{equation}
with the following properties, cf. \cite[Theorem 3.8]{Vo}:

\begin{enumerate}
\item[(a)] the filtration (\ref{e:Jantzen}) is a filtration by
  $\bH$-modules;
\item[(b)] $X_0/X_1$ is the Langlands quotient $\overline X$;
\item[(c)] The form $\langle ~,~\rangle_{\bullet}^n$ on $X_n/X_{n+1}$
  is nondegenerate and $\bullet$-invariant. Let $(p_n,q_n)$ be its
  signature. If $(p^\pm,q^\pm)$ is the signature of the
  $\langle~,~\rangle_{\bullet,t}$ for $t<1$, respectively
  $t>1$, then $p^+=p^-+\sum_{n \text{ odd}} p_n-\sum_{n \text{ odd}} q_n$
    and $q^+=q^-+\sum_{n \text{ odd}} q_n-\sum_{n \text{ odd}} p_n$.
\end{enumerate}

\subsection{Kazhdan-Lusztig polynomials}
We recalled in Theorem \ref{t:geom-class} the geometric classification
of standard and simple $\bH$-modules. We record now the known results about
the composition factors of a standard module. Retain the notation from
section \ref{sec:LWT}. In particular, let $s\in \fh$ be the semisimple
parameter, and let $\Irr_s\bH$ denote the irreducible  $\bH$-modules
with central character $W\cdot s.$
Denote
\begin{equation}
G(s)=\{g\in G: \Ad(g)s=s\},\quad \fg_1(s)=\{x\in\fg: [s,x]=x\}.
\end{equation}
It is well-known that $G(s)$ acts on $\fg_1(s)$ with finitely many
orbits. Let $\mathcal C(s)$ denote the set of orbits. Theorem
\ref{t:geom-class} can be rephrased as saying that there is a natural
bijection: 
\begin{equation}
\Irr_s\bH\leftrightarrow \{(\CO,\C L): \CO\in \mathcal C(s),\ \C
L\text{ irr. local system of Springer type supported on }\CO\}.
\end{equation}
Let $L(\CO,\C L)$ denote the irreducible $\bH$-module and $X(\CO,\C
L)$ the corresponding standard module. In this setting, the
Kazhdan-Lusztig conjectures take the following form.

\begin{theorem}[{\cite[Theorem 8.5]{L2}}] In the Grothendieck group of
  $\bH$-modules, 
$$X(\CO,\C L)=\sum_{(\CO',\C L')} P_{(\CO,\C L),(\CO',\C L')}(1)~
L(\CO',\C L'),$$
where
\begin{equation}\label{e:KLpoly}
P_{(\CO,\C L),(\CO',\C L')}(q)=\sum_{i\ge 0}[\C L: \C
H^{2i}IC(\overline{\CO'},\C L')|_{\CO}] \cdot q^i;
\end{equation}
here $\C H^\bullet IC(~)$ denote the cohomology groups of the
intersection cohomology complex.
\end{theorem}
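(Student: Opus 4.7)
The plan is to follow Lusztig's geometric approach combining the realization of $\bH$-modules on $\fg_1(s)$ with the decomposition theorem. First I would recall the setup from Section \ref{sec:LWT}: the group $G(s)$ acts on $\fg_1(s)$ with finitely many orbits, each labelled pair $(\CO, \C L)$ consisting of an orbit and an irreducible Springer-type $G(s)$-equivariant local system. The standard module $X(\CO, \C L)$ should be realized as the Borel-Moore homology of a Springer-type fiber over any $x \in \CO$, twisted by $\C L$; the simple quotient $L(\CO, \C L)$ arises from the intersection cohomology complex $IC(\overline{\CO}, \C L)$ under Lusztig's functor from (a certain subcategory of) perverse sheaves on $\fg_1(s)$ to $\bH$-modules. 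The $\bH$-action on both sides is defined through convolution in equivariant Borel-Moore homology.

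The core of the argument is then the decomposition theorem of Beilinson-Bernstein-Deligne-Gabber applied to the proper Springer-type resolution $\mu : \wti{\fg}_1(s) \to \fg_1(s)$. This decomposes (an appropriate shift of) $R\mu_* \bC$ as a direct sum of shifts of $IC(\overline{\CO'}, \C L')$, the multiplicities being exactly the stalk cohomology groups $\C H^{2i} IC(\overline{\CO'}, \C L')|_\CO$. Restricting the resulting identity to the fiber over $x \in \CO$ and extracting the $\C L$-isotypic component under the action of $A(s,e) = \pi_1(\CO, x)$ yields, in the Grothendieck group of $\bH$-modules,
$$[X(\CO, \C L)] = \sum_{(\CO', \C L')} \Bigl( \sum_{i \ge 0} [\C L : \C H^{2i} IC(\overline{\CO'}, \C L')|_\CO] \Bigr)\,[L(\CO', \C L')],$$
which is exactly the desired formula evaluated at $q = 1$.

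The hardest part is the compatibility between the geometry and the algebraic structure on both sides: one must verify that the $\bH$-action defined via convolution on the Borel-Moore homology realizing $X(\CO, \C L)$ is consistent with the $\bH$-action on IC stalks realizing the simple constituents, so that the decomposition theorem splitting descends to a decomposition in the Grothendieck group of $\bH$-modules. This rests on the parity vanishing of IC stalks in odd cohomological degrees (which is what makes $P_{(\CO,\C L),(\CO',\C L')}(q)$ a well-defined polynomial with nonnegative integer coefficients), and on the restriction to Springer-type local systems, which guarantees that every summand produced by the decomposition theorem contributes a genuine irreducible $\bH$-module rather than a missing summand. Once these compatibilities are in place, the counting statement follows formally.
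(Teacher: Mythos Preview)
The paper does not provide its own proof of this theorem: it is stated as a citation of \cite[Theorem 8.5]{L2} and used as a black box input for the subsequent discussion of hermitian Kazhdan--Lusztig polynomials. So there is nothing in the paper to compare your outline against.

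That said, your sketch is a reasonable summary of the strategy behind Lusztig's original argument: realize standard modules via (equivariant) Borel--Moore homology of Springer-type fibers, realize simple modules via intersection cohomology complexes, and read off the multiplicity matrix from the BBDG decomposition theorem together with parity vanishing. The points you flag as delicate --- compatibility of the $\bH$-actions on both geometric realizations, and the restriction to local systems of Springer type --- are indeed where the substance lies in \cite{L2}. If you were actually writing this up rather than citing it, you would need to make precise the functor from the relevant category of perverse sheaves to $\bH$-modules and verify it sends $IC(\overline{\CO'},\C L')$ to $L(\CO',\C L')$; your outline gestures at this but does not supply it. For the purposes of this paper, however, the correct move is exactly what the authors do: cite \cite{L2} and move on.
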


The polynomials $P_{(\CO,\C L),(\CO',\C L')}(q)$ can be computed using
the algorithms in \cite{L3}. In fact, \cite{L3} computes the related
$v$-polynomials
\begin{equation}\label{e:Jantzen-poly}
c_{(\CO,\C L),(\CO',\C L')}(v)=v^{\dim\CO'-\dim\CO}\cdot P_{(\CO,\C
  L),(\CO',\C L')}\left(\frac 1{v^2}\right).
\end{equation}
These polynomials enter in the Jantzen conjecture for $\bH$.

\begin{conjecture} [Jantzen conjecture]\label{Jantzen-conj} Let $X_0\supset
  X_1\supset\dots$ be the Jantzen filtration (\ref{e:Jantzen}) of
  $X=X(\CO,\C L).$ 
\begin{enumerate}
\item[(a)] For every $n\ge 0$, the $\bH$-module $X_n/X_{n+1}$ is
  semisimple.
\item[(b)] The multiplicity of the irreducible module $L(\CO,\C L)$ in
  $X_n/X_{n+1}$ equals the coefficient of $v^n$ in the polynomial
  $c_{(\CO,\C L),(\CO',\C L')}(v)$ defined in (\ref{e:Jantzen-poly}),
  or equivalently,
\item[(b')] 
\begin{equation}
P_{(\CO,\C L),(\CO',\C L')}(q)=\sum_{n\ge 0} m_{(\CO,\C L),(\CO',\C
  L')}(n)~ q^{\frac{\dim\CO'-\dim\CO-n}2},
\end{equation}
where $m_{(\CO,\C L),(\CO',\C
  L')}(n)$ denotes the multiplicity of the irreducible module
$L(\CO',\C L')$ in $X_n/X_{n+1}.$
\end{enumerate}
\end{conjecture}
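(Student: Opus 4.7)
The plan is to mimic Beilinson--Bernstein's proof of the classical Jantzen conjecture for category $\C O$, transported to the graded Hecke algebra setting via the Kazhdan--Lusztig/Lusztig geometric classification. The key idea is to identify the (analytically defined) Jantzen filtration on $X=X(\CO,\C L)$ with a geometrically defined weight filtration on an equivariant cohomology realization of the standard module, and then to read off both semisimplicity and multiplicities from the decomposition theorem.

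I would proceed in three steps. First, using Theorem \ref{t:geom-class}, I would realize $X(\CO,\C L)$ as the equivariant Borel--Moore homology of an $s$-fixed Springer-type variety, so that composition multiplicities are governed by stalks of intersection cohomology complexes as in (\ref{e:KLpoly}). Second, I would deform $\nu$ along a real one-parameter family and realize the family $X_t=X(M,\sigma,\nu_t)$ as a monodromic/nearby-cycles construction over that base; using the explicit formula of Proposition \ref{p:induced-bulletform} together with the Hodge-theoretic structure of the geometric model, I would identify the Jantzen filtration, defined by the vanishing order of $\langle~,~\rangle_{t,\bullet}$ at $t=1$, with the weight (monodromy) filtration indexed by the cohomological degree shift $\dim\CO'-\dim\CO-2i$ appearing in (\ref{e:KLpoly}). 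Third, I would invoke the BBD decomposition theorem: purity of the IC sheaves forces each graded piece $X_n/X_{n+1}$ to be a direct sum of modules $L(\CO',\C L')$, giving part (a); the multiplicity is then exactly the coefficient of $v^n$ in $c_{(\CO,\C L),(\CO',\C L')}(v)$, giving parts (b) and (b'), since the substitution $q=1/v^2$ together with the shift $v^{\dim\CO'-\dim\CO}$ converts the cohomological-degree grading of (\ref{e:KLpoly}) into the Jantzen level grading.

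The hard part is the identification in the second step: matching an analytic filtration, defined by deforming a hermitian form, with a geometric weight filtration. Carrying this out in general appears to require a monodromic or mixed-Hodge-module refinement of Lusztig's equivariant construction \cite{L2}, in which the $\bullet$-hermitian pairing acquires a geometric meaning (for instance as a Poincar\'e-type pairing compatible with the weight filtration on nearby cycles). Partial evidence is provided by Corollary \ref{c:reg-form}, where in the regular case the form is an explicit product of factors $(\al-k_\al)/(\al+k_\al)$ whose vanishing orders on the singular hyperplanes $\al=k_\al$ already conform to the predicted weight grading, as well as by the subregular $B_2$ and $G_2$ examples analyzed later in this section.
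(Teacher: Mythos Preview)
The statement you are addressing is labeled \emph{Conjecture} in the paper, and the paper offers no proof of it; it is stated as an open problem and then used to motivate the definition of hermitian Kazhdan--Lusztig polynomials and Conjecture \ref{main-conj}. So there is no proof in the paper to compare your proposal against.

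As for your proposal itself: it is not a proof but a research strategy, and you essentially acknowledge this in your final paragraph. The outline you give --- realize standard modules geometrically, identify the analytic Jantzen filtration with a weight/monodromy filtration on nearby cycles, then invoke the decomposition theorem --- is the natural adaptation of the Beilinson--Bernstein argument from category $\C O$. The genuine gap is exactly where you place it: step two. In the Beilinson--Bernstein setting one has a geometric realization of the deformation itself (via monodromic $\C D$-modules on the flag variety) in which the Jantzen filtration is \emph{defined} geometrically; here you would need an analogous monodromic or mixed-Hodge refinement of Lusztig's equivariant construction in \cite{L2}, together with a geometric interpretation of the $\bullet$-form compatible with that refinement. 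None of this exists in the literature cited, and producing it is a substantial project, not a lemma. The evidence you cite (the regular case via Corollary \ref{c:reg-form}, the $B_2$ and $G_2$ examples) is exactly the evidence the paper itself offers for the related Conjecture \ref{main-conj}; it is consistent with the conjecture but does not supply the missing identification in general.
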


\subsection{Hermitian Kazhdan-Lusztig polynomials} 
As in the previous subsection, let $X=X(\CO,\C L)$ be a standard
module with Jantzen filtration $X=X_0\supset X_1\supset\dots.$ Suppose
in addition that $s$ is real, i.e., $s\in \fh_\bR.$

Let $$\gr X=\bigoplus_{n\ge 0} X_n/X_{n+1}$$
denote the associated graded $\bH$-module. In section
\ref{sec:Jantzen-filtration}, we have defined a nondegenerate
$\bullet$-invariant form $\langle~,~\rangle^n_\bullet$ on each
$X_n/X_{n+1}$. Let $\langle~,~\rangle^X_\bullet$ be the direct sum
form $\bigoplus_{n\ge 0} \langle~,~\rangle^n_\bullet$ on $\gr X.$

By Corollary \ref{c:lwt}, every irreducible module $L(\CO',\C L')$ has
a canonical $\bullet$-invariant form $\langle~,~\rangle^{(\CO',\C
  L')}_\bullet$ which is positive definite on every lowest
$W$-type. Fix such a form for every $L(\CO',\C L').$ Assuming the truth of Conjecture
\ref{Jantzen-conj}(a), the form $\langle~,~\rangle^n_\bullet$ on $X_n/X_{n+1}$
induces a nondegenerate form on the isotypic component of $L(\CO',\C L')$ in $X_n/X_{n+1}$ whose 
signature is $$(p_{(\CO,\C L),(\CO',\C L')}(n),q_{(\CO,\C L),(\CO',\C
  L')}(n));$$ of course, $p_{(\CO,\C L),(\CO',\C L')}(n)+q_{(\CO,\C L),(\CO',\C
  L')}(n)=m_{(\CO,\C L),(\CO',\C
  L')}(n)$. With this notation, we have
\begin{equation}
(X_n/X_{n+1},\langle~,~\rangle^n_\bullet)=\sum_{(\CO',\C L')}
(p_{(\CO,\C L),(\CO',\C L')}(n)-q_{(\CO,\C L),(\CO',\C  L')}(n))
  \left(L(\CO',\C L'),\langle~,~\rangle^{(\CO',\C L')}_\bullet\right).
\end{equation}
\begin{definition}
Analogous to \cite{ALTV}, define the hermitian Kazhdan-Lusztig
polynomials
\begin{equation}\label{e:herm-KL}
P^h_{(\CO,\C L),(\CO',\C L')}(q)=\sum_{n\ge 0}(p_{(\CO,\C L),(\CO',\C
  L')}(n)-q_{(\CO,\C L),(\CO',\C  L')}(n)) ~ q^{\frac{\dim\CO'-\dim\CO-n}2}.
\end{equation}
\end{definition}
From the definition, it is clear that
\begin{equation}
(\gr X, \langle~,~\rangle^X_\bullet)=\sum_{(\CO',\C L')} P^h_{(\CO,\C
  L),(\CO',\C L')}(1) ~\left(L(\CO',\C L'),\langle~,~\rangle^{(\CO',\C L')}_\bullet\right).
\end{equation}

The question is to compute the polynomials $P^h_{(\CO,\C L),(\CO',\C
  L')}(q).$ We make the following conjecture, motivated by the main theorem of \cite{ALTV}. 

\begin{conjecture}\label{main-conj} For every $(\CO,\C L)$, there exists an orientation number $\ep(\CO,\C L)\in \{\pm 1\}$, such that
$$P^h_{(\CO,\C L),(\CO',\C L')}(q)=\ep(\CO,\C L)\ep(\CO',\C L')~ P_{(\CO,\C L),(\CO',\C L')}(-q).$$
\end{conjecture}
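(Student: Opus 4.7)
The plan is to adapt the strategy of \cite{ALTV} to the graded Hecke algebra setting, using the deformation of the continuous Langlands parameter $\nu$, the Jantzen filtration as a bookkeeping tool, and the geometric interpretation of Jantzen multiplicities coming from the Kazhdan--Lusztig conjecture (\ref{e:Jantzen-poly}). The induction will be on the closure order of orbits $(\CO,\C L)$: the base case is the tempered case, where Corollary \ref{c:lwt} gives a canonical positive-definite form on lowest $W$-types.

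First I would pin down the orientation number $\ep(\CO,\C L)$. Write $(\CO,\C L)$ via Langlands data $(M,\sigma_0,\nu)$ through the correspondence with $(s,e,\psi)$ recalled in section \ref{sec:LWT}. Normalize $\ep(\CO,\C L)=+1$ in the tempered case ($\nu=0$). For nontempered $(\CO,\C L)$, $\ep(\CO,\C L)$ is forced by the requirement that the canonical $\bullet$-form $\langle~,~\rangle^{(\CO,\C L)}_\bullet$ on $L(M,\sigma_0,\nu)$ (normalized to be positive on all lowest $W$-types) agree, up to $\ep$, with the form induced on the top Jantzen quotient $X_0/X_1=\overline X$ by the limiting procedure at $t\to\infty$. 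By Theorem \ref{t:signature} this limiting form is the induced $\bullet$-form from $\sigma_0$ on lowest $W$-types, and the only ambiguity is an overall sign determined by the parity data of the deformation, to be identified with a combinatorial expression such as $(-1)^{\dim\CO/2}$ times a sign depending on the local system.

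Second, I would execute the one-parameter deformation $\nu_t=t\nu$ for $t\in(0,\infty)$. On a generic Zariski open set the basis $\{\C R_x\otimes v\}$ of Lemma \ref{l:basis} trivializes the module, so Corollary \ref{c:reg-form} gives an explicit analytic family $\langle~,~\rangle_{t,\bullet}$ of hermitian forms. Between reducibility points $t_i$ the signature on each $W$-isotypic component is constant; at each $t_i$, Jantzen's theorem (Theorem 5.1(c)) relates the signature jump to the signatures $(p_n,q_n)$ on the odd Jantzen layers. As $t\to\infty$, Theorem \ref{t:signature} identifies the limit signature with the induced one, which on lowest $W$-types is pure positive. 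Summing the signature jumps at all walls $t_i$ between $1$ and $\infty$, and using Conjecture \ref{Jantzen-conj} to identify the jump at each wall with the coefficient $m_{(\CO,\C L),(\CO',\C L')}(n)$ of $v^n$ in $c_{(\CO,\C L),(\CO',\C L')}(v)$, one obtains an expression of the hermitian KL polynomial $P^h_{(\CO,\C L),(\CO',\C L')}(q)$ as a signed sum which, after matching signs with (\ref{e:Jantzen-poly}), is exactly $\ep(\CO,\C L)\ep(\CO',\C L')\,P_{(\CO,\C L),(\CO',\C L')}(-q)$.

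The hard part splits into three distinct obstacles. First, Conjecture \ref{Jantzen-conj} itself is open for graded Hecke algebras and is a prerequisite; its semisimplicity statement seems to require a geometric realization of Jantzen filtrations via weight filtrations on the intersection cohomology stalks $\C H^{2i}IC(\overline{\CO'},\C L')|_\CO$, transported to the Hecke algebra via the $K$-theoretic constructions of \cite{KL,L2}. Second, even granting Conjecture \ref{Jantzen-conj}, one must show that the Jantzen form $\langle~,~\rangle^n_\bullet$ is \emph{definite} on the full isotypic component of each $L(\CO',\C L')$ inside $X_n/X_{n+1}$ (not merely nondegenerate): this is what forces $p_n-q_n=\pm m_n$ and lets the $(-q)$ substitution work. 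In the real group setting this definiteness comes from the purity of intersection cohomology and the compatibility of the Jantzen form with a Hodge/weight filtration, and the analogous purity statement in the Hecke algebra setting is the main technical heart. Third, one must verify that the orientation numbers $\ep(\CO,\C L)$ defined via the deformation are genuinely intrinsic (depending only on $(\CO,\C L)$, not on the particular deformation chosen), and compute them as an explicit function of the geometric data $(s,e,\psi)$ — most plausibly a sign built from $\dim\CO$ and a character-theoretic invariant of $\C L$.
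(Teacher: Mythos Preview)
This statement is Conjecture \ref{main-conj} in the paper --- it is \emph{not} proved there. The paper offers only partial evidence: Proposition \ref{p:conj-reg} establishes it at regular central character (where every $\bA$-weight has multiplicity one, so the explicit diagonal formula (\ref{e:form-regular}) reduces the question to reading off orders of zeros and signs), and the subsections treating the subregular central characters in $B_2$ and $G_2$ verify it by direct computation of intertwining operators on $W$-types. There is no general proof to compare your proposal against.

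Your outline is the expected strategy, modeled on \cite{ALTV}, and you have correctly isolated the genuine obstructions. All three of your ``hard parts'' are real and open: Conjecture \ref{Jantzen-conj} is assumed, not proved, in the paper; the definiteness of $\langle~,~\rangle^n_\bullet$ on each $L(\CO',\C L')$-isotypic component of $X_n/X_{n+1}$ --- which is what would force $p_n-q_n=\pm m_n$ rather than merely $|p_n-q_n|\le m_n$ --- is exactly the missing ingredient; and an intrinsic description of $\ep(\CO,\C L)$ is left open even in the cases the paper treats. On this last point, your guess $(-1)^{\dim\CO/2}$ does not match the paper's regular-case answer (\ref{e:orientation}), where $\ep$ is $(-1)^{\ell_0(x)}$ for a root-counting function $\ell_0$ depending on the central character $s$ and the specific Langlands data, not on $\dim\CO$ alone; in the $B_2$ and $G_2$ subregular examples the paper finds all $\ep\equiv 1$. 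Your proposal should therefore be read as a program for attacking the conjecture --- consonant with the paper's own stance --- rather than as a proof.
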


In the rest of the section, we present some examples in support of this conjecture and determine the explicit form of the orientation number in some cases. In particular, we prove Conjecture \ref{main-conj} in the case of regular central character, see Proposition \ref{p:conj-reg}.

\subsection{Regular central character}
Let $\bH$ be a graded Hecke algebra with parameter function
$k$. Recall the minimal principal series $X(\nu)$ with real parameter
$\nu\in V^\vee.$ Suppose $\nu$ is dominant, i.e., $(\al,\nu)>0$ for
all $\al\in R^+.$ A basis of $X(\nu)$ is given by the $\bA$-weight
vectors $\{\C R_x\otimes \one_\nu\}_{x\in W}$ from (\ref{eq:basis}),
and every $\bA$-weight space has multiplicity $1$. In particular, this
means that every irreducible subquotient of $X(\nu)$ occurs with
multiplicity $1$.

 If we normalize the form $\langle~,~\rangle_\bullet$ on $X(\nu)$ so that 
$$\langle \C R_1\otimes \one_\nu,\C R_1\otimes\one_\nu\rangle=1,$$
by (\ref{eq:bprod}), we have
\begin{equation}\label{e:form-regular}
\langle\C R_x\otimes\one_\nu,\C R_x\otimes\one_\nu\rangle_\bullet=\prod_{\beta>0, x\beta<0}\frac{(\beta,\nu)-k_\beta}{(\beta,\nu)+k_\beta}.
\end{equation}
In particular, one gets the following well-known result:

\begin{lemma}
If $\nu$ is dominant, $X(\nu)$ is reducible if and only if there exists $\beta>0$ such that $(\beta,\nu)=k_\beta.$
\end{lemma}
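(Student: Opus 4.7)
The plan is to read off both directions from the diagonal form in (\ref{e:form-regular}). Since ``dominant'' here means $(\alpha,\nu)>0$ for all $\alpha\in R^+$, the parameter $\nu$ is in particular regular, so the weights $\{w\nu : w\in W\}$ of $X(\nu)$ are distinct and each weight space is one-dimensional. Because $\omega^\bullet=\overline{\omega}$ and $\nu$ is real, distinct real $\bA$-weight spaces are orthogonal for any $\bullet$-invariant form, so the normalized $\langle\cdot,\cdot\rangle_\bullet$ is diagonal in the basis $\{\C R_x\otimes\one_\nu\}_{x\in W}$ with entries given by (\ref{e:form-regular}).

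For the ``if'' direction, suppose $(\beta,\nu)=k_\beta$ for some $\beta\in R^+$. Since $s_\beta$ sends $\beta$ to $-\beta$, the root $\beta$ itself lies in $\{\alpha>0 : s_\beta\alpha<0\}$, so the diagonal entry at $x=s_\beta$ contains the vanishing factor $\tfrac{(\beta,\nu)-k_\beta}{(\beta,\nu)+k_\beta}$ and is therefore $0$. By diagonality, $\C R_{s_\beta}\otimes\one_\nu$ lies in the radical of $\langle\cdot,\cdot\rangle_\bullet$, whereas $\C R_1\otimes\one_\nu$ (of norm $1$) does not. The radical is an $\bH$-submodule by invariance, so it is a proper nonzero submodule of $X(\nu)$, giving reducibility.

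For the converse, assume $(\alpha,\nu)\neq k_\alpha$ for every $\alpha\in R^+$. The hypothesis, combined with dominance and the $W$-invariance of the parameter function, forces $k_\alpha-(\alpha,w\nu)\neq 0$ for every $w\in W$ and every $\alpha\in R^+$: if $w^{-1}\alpha\in R^+$ this reduces to the hypothesis applied to $w^{-1}\alpha$, while if $w^{-1}\alpha\in R^-$ then $(\alpha,w\nu)\le 0<k_\alpha$ by dominance. Let $Y$ be a nonzero $\bH$-submodule of $X(\nu)$; as $Y$ is $\bA$-stable with one-dimensional weight spaces, it contains some $\C R_w\otimes\one_\nu$. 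For any simple root $\alpha$, acting on this weight vector the element $R_{s_\alpha}$ specializes to
\[
\tfrac{(\alpha,w\nu)}{k_\alpha-(\alpha,w\nu)}\,t_{s_\alpha}-\tfrac{k_\alpha}{k_\alpha-(\alpha,w\nu)}\in\bH,
\]
which therefore preserves $Y$; by the identity $R_{s_\alpha}R_w=R_{s_\alpha w}$, the result is $\C R_{s_\alpha w}\otimes\one_\nu\in Y$. Iterating along any reduced expression shows $\C R_y\otimes\one_\nu\in Y$ for every $y\in W$, hence $Y=X(\nu)$. The step that will require the most care is tracking when the intertwiners $R_{s_\alpha}$ remain nonsingular on every weight space reached during this iteration; this is precisely where the global hypothesis $(\alpha,\nu)\neq k_\alpha$ enters.
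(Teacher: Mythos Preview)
Your argument is correct. The paper offers no explicit proof beyond the sentence ``In particular, one gets the following well-known result,'' so the intended argument is simply that the diagonal form (\ref{e:form-regular}) degenerates precisely when some $(\beta,\nu)=k_\beta$, and that degeneracy of this particular form (built from the long intertwiner $R_{w_0}$) is equivalent to reducibility because its radical is the kernel of the Langlands map $X(\nu)\to X(w_0\nu)$, hence the unique maximal proper submodule.

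Your ``if'' direction matches this exactly. For the converse you take a somewhat different, more hands-on route: rather than invoking the Langlands quotient structure to identify the radical with the maximal submodule, you show irreducibility directly by pushing weight vectors around with the $R_{s_\alpha}$, using that the rational expressions specialize to honest elements of $\bH$ on each weight space whenever $k_\alpha-(\alpha,w\nu)\neq 0$. This is the classical ``intertwining operators are invertible $\Rightarrow$ principal series is irreducible'' argument, and it has the advantage of being entirely self-contained: it needs neither Theorem~\ref{t:Langlands-quot} nor the identification of the form with the long intertwining map. The paper's implicit route is shorter once that machinery is in place, but yours makes the mechanism transparent. One cosmetic point: your last sentence flags the nonsingularity of $R_{s_\alpha}$ on intermediate weight spaces as the step ``that will require the most care,'' but you have in fact already dispatched it in the preceding paragraph, so you can drop that hedge.
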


Moreover, (\ref{e:form-regular}) allows us to determine
easily the levels of the Jantzen filtration of $X(\nu).$ For every
$x\in W$, set
\begin{equation}\label{e:tau}
\tau(x,\nu)=\{\beta>0: x\beta<0 \text{ and } (\beta,\nu)=k_\beta\}.
\end{equation}

\begin{lemma}
Suppose $\nu$ is dominant. The $n$-th level in the Jantzen filtration (\ref{e:Jantzen})
of $X(\nu)$ is $$X(\nu)_n=\text{span} \{\C R_x\otimes \one_\nu:
\tau(x,\nu)\ge n\},$$
where $\tau(x,\nu)$ is as in (\ref{e:tau}).
\end{lemma}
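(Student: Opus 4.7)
The plan is to deform $\nu$ to an analytic one-parameter family $\nu_t$ with $\nu_1=\nu$, say $\nu_t=\nu+(t-1)\eta$ for a generic $\eta\in V^\vee$, and then apply the explicit diagonalization of Corollary \ref{c:reg-form}. Genericity of $\eta$ ensures that for every positive root $\beta$ the linear function $t\mapsto(\beta,\nu_t)-k_\beta$ either is nonzero at $t=1$ (precisely when $(\beta,\nu)\neq k_\beta$) or has a simple zero there (precisely when $(\beta,\nu)=k_\beta$). Shrinking $\eta$ keeps $\nu_t$ dominant for $t$ in a small interval about $1$, guarantees that the denominators $(\al,\nu_t)+k_\al$ remain positive so that $\C R_x\otimes\one_{\nu_t}$ is well-defined and analytic in $t$, and, by the previous lemma, ensures $X(\nu_t)$ is irreducible in a punctured neighbourhood of $1$.

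Identify every $X(\nu_t)$ with the fixed vector space $E=\bC[W]$ via the $\bC[W]$-module isomorphism. Transporting the normalized $\bullet$-form from (\ref{e:normalized}) gives an analytic family $\langle\,,\,\rangle_{\bullet,t}$ on $E$, and by Corollary \ref{c:reg-form} this form is diagonal in the basis $\{\C R_x\otimes\one_{\nu_t}\}_{x\in W}$ with diagonal entries
$$\langle \C R_x\otimes\one_{\nu_t},\C R_x\otimes\one_{\nu_t}\rangle_{\bullet,t}=\prod_{\beta>0,\,x\beta<0}\frac{(\beta,\nu_t)-k_\beta}{(\beta,\nu_t)+k_\beta}.$$
Because the denominators are nonzero at $t=1$, the order of vanishing of this entry at $t=1$ equals the number of positive roots $\beta$ with $x\beta<0$ and $(\beta,\nu)=k_\beta$, i.e.\ $|\tau(x,\nu)|$.

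Both inclusions then follow at once from the definition of the Jantzen filtration. If $|\tau(x,\nu)|\ge n$, the analytic lift $f(t)=\C R_x\otimes\one_{\nu_t}$ of $\C R_x\otimes\one_\nu$ satisfies $\langle f(t),\C R_y\otimes\one_{\nu_t}\rangle_{\bullet,t}=0$ for $y\neq x$ and vanishes to order $|\tau(x,\nu)|\ge n$ for $y=x$, so $\C R_x\otimes\one_\nu\in X(\nu)_n$. Conversely, given $v=\sum_x c_x\,\C R_x\otimes\one_\nu\in X(\nu)_n$, every analytic lift takes the form $f_v(t)=\sum_x c_x(t)\,\C R_x\otimes\one_{\nu_t}$ with $c_x(1)=c_x$, so that
$$\langle f_v(t),\C R_y\otimes\one_{\nu_t}\rangle_{\bullet,t}=c_y(t)\prod_{\beta>0,\,y\beta<0}\frac{(\beta,\nu_t)-k_\beta}{(\beta,\nu_t)+k_\beta};$$
for any $y$ with $c_y\neq0$, $c_y(t)$ does not vanish at $t=1$, so this pairing vanishes to order exactly $|\tau(y,\nu)|$, forcing $|\tau(y,\nu)|\ge n$. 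There is no substantive obstacle once Corollary \ref{c:reg-form} is in hand; the only points requiring care are the choice of a generic deformation (so that each factor in the numerator contributes a simple zero) and the routine fact that the Jantzen filtration does not depend on the particular analytic one-parameter family chosen, and that analytic lifts may be used in place of polynomial ones after clearing denominators.
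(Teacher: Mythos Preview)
Your argument is correct and follows exactly the paper's approach: the paper's own proof is the one-liner ``This is immediate from (\ref{e:form-regular}), since the order of zero of $\langle \C R_x\otimes\one_\nu,\C R_x\otimes\one_\nu\rangle_\bullet=\tau(x,\nu)$ and the form $\langle~,~\rangle_\bullet$ is diagonal in the basis $\{\C R_x\otimes\one_\nu\}$,'' and you have simply supplied the details it suppresses (choice of deformation, verification of both inclusions). One small caution: the claim that the Jantzen filtration is independent of the analytic family is \emph{not} routine in general, but here it is immediate because the form is diagonal in a basis varying analytically and invertibly with $t$, so the order of vanishing of each diagonal entry is intrinsic to $\nu$ and does not depend on the generic direction $\eta$.
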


\begin{proof}
This is immediate from (\ref{e:form-regular}), since the order of zero
of $\langle \C R_x\otimes\one_\nu,\C
R_x\otimes\one_\nu\rangle_\bullet=\tau(x,\nu)$ and the form
$\langle~,~\rangle_\bullet$ is diagonal in the basis $\{\C R_x\otimes\one_\nu\}.$
\end{proof}

\subsection{} Now suppose that the parameter function for the Hecke algebra is
constant $k=1$.
We analyze first the case $\nu=\rho^\vee.$ Consider the one-parameter family
$X(\nu_t)$, $\nu_t=t\rho^\vee$, $t$ close to $1$. For every positive
root $\beta$, the positive integer $(\beta,\rho^\vee)$ is the height of $\beta.$ We have $(\beta,\rho^\vee)=1$ if and only if $\beta$ is a
simple root. Then
\begin{equation}\label{form-rho}
\langle\C R_x\otimes\one_{\nu_t},\C
R_x\otimes\one_{\nu_t}\rangle_\bullet=
 \left(\frac{t-1}{t+1}\right)^{\tau_0(x)}\displaystyle{\prod_{\beta>0,
  x\beta<0, (\beta,\rho^\vee)>1}\frac{t(\beta,\rho^\vee)-1}{t(\beta,\rho^\vee)+1}>0}.
\end{equation}
where 
\begin{equation}
\tau_0(x)=\{\al\text{ simple root}: x\al<0\}.
\end{equation}
This implies that the $n$-th level of the associated graded of the
Jantzen filtration at $\rho^\vee$ is given by
\begin{equation}
X_n/X_{n+1}=\text{span}\{\C R_x\otimes \one_{\rho^\vee}: \tau_0(x)=n\},
\end{equation}
and $n$ ranges from $0$ to $|\Pi|$, the number of simple
roots. The classification of simple $\bH$-modules with central
character $\rho^\vee$ is well-known: there are $2^{|\Pi|}$ simple
$\bH$-modules, one for each subset of the simple roots, and each one
occurs with multiplicity $1$ in $X(\rho^\vee).$ Formula
(\ref{form-rho}) implies that each irreducible module contributes 
$+1$ to the $\bullet$-form in the level of $X(\rho^\vee)$ where it occurs.

One can analyze similarly the Jantzen filtration at $\rho^\vee$ for
every standard module. Notice that the standard modules
at $\rho^\vee$ are precisely of the form $\Ind_{\bH_J}^\bH
(\St\otimes\bC_{\nu_J})$, where $\nu_J=\rho^\vee-\rho^\vee_J.$

\smallskip

This is consistent with the geometric picture at $\rho^\vee.$ There
are $2^{|\Pi|}$ orbits of $G(\rho^\vee)$ on $\fg_1(\rho)$, each orbit is
of the form $\oplus_{\al\in J} \bC^\times\cdot X_\al$, for a unique $J\subset
\Pi$; here $X_\al$ denote root vectors for $\al\in\Pi.$ In particular,
the closure relations of orbits coincide with the inclusion of subsets
$J$, and the KL polynomials are $P_{J,J'}(q)=1$ if $J\subset J'$, and
$0$ otherwise. In conclusion, at central character $\rho^\vee,$ we
have
\begin{equation}
P^h_{J,J'}(q)=P_{J,J'}(q)=\begin{cases}1,&J\subset J',\\0,&\text{otherwise}.\end{cases}
\end{equation}

\subsection{}
Now suppose that $s$ is an arbitrary regular dominant central character. The structure of the composition series at $s$ reduces to a parabolic subalgebra as follows. Let $$\Delta_s=\{\beta\in R^+: (\beta,s)=1\}.$$
Theorem \ref{t:geom-class} implies in this case that the simple $\bH$-modules with central character $s$ are in one-to-one correspondence with $G(s)=H$-orbits on $\fg_1(s)=\{x\in\fg: [s,x]=x\}=\text{span}\{x_\beta: \beta\in\Delta_s\}$, where $x_\beta$ is a root vector for $\beta.$ There exists $w\in W$ such that $w\Delta_S\subset\Pi$, i.e., a subset of simple roots, so denote $w\Delta_s=\Pi_M$, for some Levi subgroup $M$.

Set $s'=w^{-1}s.$ Then $s'=\rho_M^\vee+\nu$, where $(\al,\nu)=0$ for all $\al\in \Pi_M.$ It is equivalent to determine $G(s')=H$-orbits on $\fg_1(s)=\text{span}\{x_\al:\al\in\Pi_M\},$ but this reduces the problem to thecase of composition series at $\rho_M^\vee$ in $\bH_M.$ Thus the orbits are in one-to-one correspondence with
$$\{J\subset\Pi_M\}\leftrightarrow \sum_{\al\in J}\bC^\times\cdot x_\al=:\CO^M(J).$$
Since every $\CO^M(J)$ has smooth closure, as before, all KL polynomials are $0$ or $1$ depending on inclusion $J'\subset J.$

Fix $J\subset\Pi_M$. Suppose that we have a standard module $X(J,\nu_J)=\Ind_{\bH_J}^\bH(\St_J\otimes\bC_{\nu_J})$ with Langlands quotient $L(J,\nu_J).$ We want to know the level and orientation number of $L(J,\nu_J).$ Since all $\bA$-weights have multiplicity one, $L(J,\nu_J)$ is uniquely determined by the $\bA$-weight $$\lambda_{L(J,\nu_J)}:=-\rho_J^\vee+\nu_J;$$
here $\nu_J$ is dominant with respect to $\Pi\setminus J.$
Inside the minimal principal series $X(s)$, the $\bA$-weight vector with weight $-\rho_J+\nu_J$ is of the form $\C R_x\otimes \one_s$. Since $\C R_x\otimes \one_s$ has $\bA$-weight $xs,$ it follows that
\begin{equation}\label{e:x}
xs=\lambda_{L(J,\nu_J)}.
\end{equation}
By (\ref{e:form-regular}), the form on $\C R_x\otimes \one_s$ is $$\prod_{\beta>0, x\beta<0}\frac{(\beta,s)-1}{(\beta,s)+1}.$$ The contribution of $\C R_x\otimes \one_s$ to the hermitian form in the associated graded module for $X(s)$ is obtained by replacing $s$ with $st$, $0<t<1$, and taking $\lim_{t\to 1}.$ The sign is 
\begin{equation}\label{e:orientation}
\ep(L(J,\nu_J)):=(-1)^{\ell_0(x)},\text{ where } \ell_0(x)=\#\{\beta>0: 0<(\beta,s)<1\text{ and } x\beta<0\},
\end{equation}
or equivalently,
\begin{equation}
\ell_0(x)=\#\{\beta>0: x\beta<0\text{ and } 0<(x\beta,\lambda_{L(J,\nu_J)})<1\}
\end{equation}
In order to establish the truth of Conjecture \ref{main-conj} at regular central character, it remains to verify that the normalization of $\bullet$-form on $L(J,\nu_J)$ is given by the requirement that $\C R_x\otimes \one_s$ be positive. This is indeed the case as follows. The canonical $\bullet$-form on a simple $\bH$-module is normalized so that it is positive definite on all $W$-types. For $L(J,\nu_J)$ this is equivalent with the normalization which as $\nu_J\to\infty$ has the form positive definite on all of $L(J,\nu_J).$ But by Corollary \ref{c:reg-form}, this is the normalization where the $\bA$-weight vector corresponding to the leading weight $\lambda_{L(J,\nu_J)}=-\rho_J^\vee+\nu_J$ is positive. Thus:

\begin{proposition}\label{p:conj-reg}
Conjecture \ref{main-conj} holds in the case of regular central character with the orientation numbers given by (\ref{e:orientation}).
\end{proposition}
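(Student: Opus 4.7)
The plan is to read the desired identity
$P^h_{J,J'}(q)=\ep(J)\ep(J')\,P_{J,J'}(-q)$
one pair $(J,J')$ at a time, by diagonalizing the Jantzen form on the minimal principal series $X(s)$ and then transferring the signs to each irreducible quotient via the canonical lowest--$W$-type normalization. The analysis splits into three essentially independent pieces: the geometric side, the Jantzen side, and the normalization match.

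\emph{Geometric side.} First I would recall that at regular real $s$ the variety $\fg_1(s)$ is spanned by root vectors $x_\al$ for $\al\in w\Delta_s=\Pi_M$, and the $H$-orbits are the coordinate subspaces indexed by $J\subset \Pi_M$, with smooth closures. All intersection cohomology therefore reduces to ordinary (top) cohomology, so the KL polynomials are
$P_{J,J'}(q)=1$ if $J\subset J'$ and $0$ otherwise, and it suffices to show, for each $(J,J')$ with $J\subset J'$, that $L(J',\nu_{J'})$ appears in the Jantzen filtration of $X(J,\nu_J)$ at the expected level and with signed multiplicity $\ep(J)\ep(J')$.

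\emph{Jantzen side on $X(s)$.} Next, I would carry out the Jantzen analysis on the analytic family $X(s_t)$, $s_t=ts$, near $t=1$, using the $\bA$-weight basis $\{\C R_x\otimes\one_{s_t}\}$ and the diagonal formula (\ref{e:form-regular}). The order of vanishing at $t=1$ of the $(x,x)$-entry is exactly $\#\{\beta>0:x\beta<0,\ (\beta,s)=1\}$, which reads off the Jantzen level of $\C R_x\otimes\one_s$, and the sign of the leading Laurent coefficient is $(-1)^{\ell_0(x)}$, coming from the factors $(\beta,st)-1$ with $0<(\beta,s)<1$ (the remaining factors contribute positive limits because $(\beta,s)+1>0$ and $(\beta,s)-1>0$ for $(\beta,s)>1$). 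Since every $\bA$-weight space in $X(s)$ is one-dimensional, the unique occurrence of $L(J',\nu_{J'})$ inside a composition factor of $X(s)$ is detected by the unique vector $\C R_x\otimes\one_s$ with $xs=\lambda_{L(J',\nu_{J'})}=-\rho^\vee_{J'}+\nu_{J'}$.

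\emph{Normalization and conclusion.} The last and main step, which I expect to be the delicate point, is to show that the sign $(-1)^{\ell_0(x)}$ computed above is exactly the sign $\ep(L(J',\nu_{J'}))$ that appears when one compares the Jantzen-induced form to the canonical $\bullet$-form on $L(J',\nu_{J'})$. By Corollary \ref{c:lwt} the canonical form is characterized by being positive definite on the lowest $W$-types, equivalently by being the $\nu_{J'}\to\infty$ limit of the form on the standard module. Corollary \ref{c:reg-form} (applied to the leading $\bA$-weight vector for the maximal dominant weight $\lambda_{L(J',\nu_{J'})}$) then translates this into positivity on the vector $\C R_x\otimes\one_s$; comparing with the Jantzen sign, the orientation number is precisely $(-1)^{\ell_0(x)}$. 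Together with the reduction from Step~1, this matches $P^h_{J,J'}(1)$ against $\ep(J)\ep(J')P_{J,J'}(-1)$, and the degree/level bookkeeping from (\ref{e:Jantzen-poly}) promotes the numerical identity to the polynomial identity. The technical obstacle is exactly this normalization comparison---i.e., verifying that the ``positive on lowest $W$-types'' normalization and the ``positive on the leading $\bA$-weight vector'' normalization coincide for a regular Langlands quotient---which is why the argument hinges on Corollary \ref{c:reg-form}.
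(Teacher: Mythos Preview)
Your proposal is correct and follows essentially the same route as the paper: both arguments diagonalize the $\bullet$-form on the minimal principal series $X(s)$ in the $\C R_x$-basis via (\ref{e:form-regular}), read off the Jantzen level as the number of $\beta\in\Delta_s$ with $x\beta<0$ and the sign of the leading coefficient as $(-1)^{\ell_0(x)}$, and then identify this sign with the orientation number by checking (through Corollary \ref{c:reg-form} and the $\nu_J\to\infty$ limit) that the canonical lowest-$W$-type normalization on $L(J,\nu_J)$ coincides with positivity on the leading $\bA$-weight vector $\C R_x\otimes\one_s$. The only cosmetic difference is that you phrase the goal for all pairs $(J,J')$ up front while carrying out the Jantzen computation only for $J=\emptyset$; the paper does exactly the same, leaving the parallel analysis for $X(J,\nu_J)$ with $J\neq\emptyset$ implicit.
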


\subsection{Subregular orbit in $B_2$} Consider the semisimple element
$s=(1,0)$ in type $B_2.$ There are three $G(s)$-orbits in $\fg_1(s)$,
which we denote by $0$, $A_1$, and $\wti A_1$ (the notation is
compatible with the labeling of their $G$-saturations). The orbits
have dimension $0$, $2$, and $3$, respectively, and the closure
ordering is the obvious total order. The local systems that enter are
trivial for $0$ and $A_1$, so we drop them from notation, and there are two local systems $\C
L_\triv$ and $\C L_\sgn$ for $\wti A_1.$ The matrix of polynomials
$P$, computed in \cite{Ci} using the algorithms of Lusztig \cite{L3}, is in Table \ref{t:B2}.

\begin{table}[h]\label{t:B2}
\caption{KL polynomials: $B_2,\ s=(1,0)$.}
\begin{tabular}{|c||c|c|c|c|}
\hline
Dim. &$0$ &$2$ &$3$ &$3$\\
\hline
Orbits &$0$ &$A_1$ &$(\wti A_1,\C L_\triv)$ &$(\wti A_1,\C L_\sgn)$\\

\hline
\hline
$0$ &$1$ &$1$ &$1$ &$q$\\
\hline
$A_1$ &$0$ &$1$ &$1$ &$0$\\
\hline
$(\wti A_1,\C L_\triv)$ &$0$ &$0$ &$1$ &$0$\\
\hline
$(\wti A_1,\C L_\sgn)$ &$0$ &$0$ &$0$ &$1$\\
\hline

\end{tabular}
\end{table}

We only need to compute the Jantzen filtration and signatures for
$X(A_1)$ and $X(0).$ For this, we do a computation with the
intertwining operators and the $W$-structure of standard
modules. There are $5$ $W$-types, with the notation in terms of
bipartitions as in \cite{Ca}. The $W$-structure of the standard and the irreducible
modules at $s=(1,0)$ is as follows (the $*$ indicates the lowest $W$-type):
\begin{equation}
\begin{aligned}
&X(0)=\bC[W(B_2)], &L(0)=2\times 0^* + 1\times 1;\\
&X(A_1)=11\times 0+1\times 1+0\times 11, &L(A_1)=11\times 0^*;\\
&X(\wti A_1,\C L_\triv)=L(\wti A_1,\C L_\triv)=1\times 1^*+0\times
11;\\
&X(\wti A_1,\C L_\sgn)=L(\wti A_1,\C L_\sgn)=0\times 2^*.\\
\end{aligned}
\end{equation}

For the case $A_1$, we consider the induced module
$X(A_1,(-1/2+\nu,1/2+\nu))=\Ind_{A_1}^{B_2}(\St\otimes\bC_\nu)$,
$\nu>0,$ whose central character is $(-1/2+\nu,1/2+\nu)$. A direct
calculation with the intertwining operator shows that at $\nu=1/2$,
the Jantzen filtration is given by $L(A_1)$ at level $0$,
and $L(\wti A_1,\C L_\triv)$ at level $1$. The signature of the
$\bullet$-form on each $W$-type for $0\le\nu<1/2$ is given by the
parity of the lowest harmonic degree, and thus it is $+$ for $11\times
0$ and $-$ for $1\times 1.$ The normalization of the $\bullet$-forms
implies then that at $\nu=1/2$, the forms on level $1$ are related by:
\begin{equation}
(X(A_1)_1,\langle~,~\rangle^1_\bullet)=(L(\wti A_1,\C
L_\triv),\langle~,~\rangle^{(\wti A_1,\C L_\triv)}_\bullet),
\end{equation}
and thus $P^h_{(A_1),(\wti A_1,\C L_\triv)}(q)= q^{\frac{3-2-1}2}=1.$

For the case $0$, we consider the minimal principal series
$X(\nu_1,\nu_2)$, $0=\nu_2\le \nu_1\le 1.$ The levels of the Jantzen
filtration at $(1,0)$ are given by the order of zeros of the
intertwining operator as follows: $L(0)$ in level $0$, $L(\wti A_1,\C
L_\sgn)$ at level $1$, $L(A_1)$ at level $2$, and $L(\wti A_1,\C
L_\triv)$ at level $3$. Using again that the signature of $W$-types
for $0\le\nu_1<1$ is given by the parity of the lowest harmonic
degree, we see that forms on levels $1$--$3$ are related by:
\begin{equation}
\begin{aligned}
(X(0)_1/X(0)_2,\langle~,~\rangle^1_\bullet)&=-(L(\wti A_1,\C
L_\sgn),\langle~,~\rangle^{(\wti A_1,\C L_\sgn)}_\bullet);\\
(X(0)_2/X(0)_3,\langle~,~\rangle^2_\bullet)&=(L(A_1),\langle~,~\rangle^{A_1}_\bullet);\\
(X(0)_3,\langle~,~\rangle^3_\bullet)&=(L(\wti A_1,\C
L_\triv),\langle~,~\rangle^{(\wti A_1,\C L_\triv)}_\bullet).\\
\end{aligned}
\end{equation}
Thus, $P^h_{(0),(A_1)}=q^{\frac{2-0-2}2}=1,$
$P^h_{(0),(\wti A_1,\C L_\triv)}= q^{\frac{3-0-3}2}=1$, and
$P^h_{(0),(\wti A_1,\C L_\sgn)}= (-1) q^{\frac{3-0-1}2}=-q.$

\smallskip

In conclusion, for the subregular $s$ in $B_2$, $P^h_{(\C O,\C L),(\C O'\C
  L')}(q)= P_{(\C O,\C L),(\C O'\C
  L')}(-q)$.

\subsection{Subregular orbit in $G_2$}
We choose simple roots for $G_2$: $\al_s=\frac 13(2,-1,-1)$ and
$\al_l=(-1,1,0)$ and fundamental coweights $\om_1^\vee=(1,1,-2)$ and
$\om_2^\vee=(0,1,-1).$ Let $s_1$ and $s_2$ be the simple reflections
corresponding to $\al_s$ and $\al_l$, respectively. There are $6$ irreducible Weyl group
representations, which we label as $1_1$ (the trivial), $1_2$ (the
sign), $1_3$ ($s_1=1,$ $s_2=-1$), $1_4$ ($s_1=-1$, $s_2=1$), $2_1$
(the reflection representation), and $2_2=2_1\otimes 1_3.$

Let $s$ be one half of a neutral element for the subregular nilpotent
orbit in $G_2.$ In our coordinates, we choose $s=(0,1,-1).$

There are four $G(s)$-orbits on $\fg_1(s)$, labeled $0$, $A_1^l$,
$A_1^s$, $G_2(a_1)$, of dimensions $0$, $2$, $3$, and $4$,
respectively. The local systems that enter for $0$, $A_1^l$, $A_1^s$
are trivial, but there are two local systems of Springer type for
$G_2(a_1)$, that we denote $\C L_\triv$ and $\C L_\refl.$ The matrix of polynomials
$P$, computed in \cite{Ci}, is in Table \ref{t:G2}.

\begin{table}[h]\label{t:G2}
\caption{KL polynomials: $G_2,$ subregular $s$.}
\begin{tabular}{|c||c|c|c|c|c|}
\hline
Dim. &$0$ &$2$ &$3$ &$4$ &$4$\\
\hline
Orbits &$0$ &$A_1^l$ &$A_1^s$ &$(G_2(a_1),\C L_\triv)$ &$(G_2(a_1),\C L_\refl)$\\

\hline
\hline
$0$ &$1$ &$1$ &$q+1$ &$1$ &$q$\\
\hline
$A_1^l$ &$0$ &$1$ &$1$ &$1$ &$0$\\
\hline
$A_1^s$ &$0$ &$0$ &$1$ &$1$ &$1$\\
\hline
$(G_2(a_1),\C L_\triv)$ &$0$ &$0$ &$0$ &$1$ &$0$\\
\hline
$(G_2(a_1),\C L_\refl)$ &$0$ &$0$ &$0$ &$0$ &$1$\\
\hline

\end{tabular}
\end{table}

The $W$-structure of the standard and the irreducible
modules at $s=(0,1,-1)$ is as follows (the $*$ indicates the lowest $W$-type):
\begin{equation}
\begin{aligned}
&X(0)=\bC[W(G_2)], &L(0)=1_1^* + 2_1;\\
&X(A_1^l)=1_3^*+2_1+2_2+1_2, &L(A_1^l)=1_3^*;\\
&X(A_1^s)=2_2^*+2_1+1_4+1_2, &L(A_1^s)=2_2^*;\\
&X(G_2(a_1),\C L_\triv)=L(G_2(a_1),\C L_\triv)=2_1^*+1_2;\\
&X(G_2(a_1),\C L_\refl)=L(G_2(a_1),\C L_\refl)=1_4^*.\\
\end{aligned}
\end{equation}

For the case $A_1^s$, we consider the standard induced module
$\Ind_{A_1^s}^{G_2}(\St\otimes\bC_\nu)$, of central character $-\frac
12(2,-1,-1)+\nu(0,1,-1).$ The relevant reducibility point is
$\nu=\frac 12.$ We can analyze the Jantzen filtration and signature of the forms in the same
way as for $B_2$ and find:
\begin{equation}
\begin{aligned}
&(X(A_1^s)_0/X(A_1^s)_1,\langle~,~\rangle^0_\bullet)=(L(A_1^s),\langle~,~\rangle^{(A_1^s)}_\bullet);\\
&(X(A_1^s)_1,\langle~,~\rangle^1_\bullet)=(L(G_2(a_1),\C
L_\triv),\langle~,~\rangle^{(G_2(a_1),\C L_\triv)}_\bullet) +(L(G_2(a_1),\C
L_\refl),\langle~,~\rangle^{(G_2(a_1),\C L_\refl)}_\bullet),
\end{aligned}
\end{equation}
and so $P^h_{(A_1^s),(G_2(a_1),\C L_\triv)}(q) =P^h_{(A_1^s),(G_2(a_1),\C L_\refl)}(q)=q^{\frac{4-3-1}2}=1.$

For the case $A_1^l$, we consider the standard induced module
$\Ind_{A_1^l}^{G_2}(\St\otimes \bC_\nu),$ of central character $-\frac
12(-1,1,0)+\nu(1,1,-2).$ The relevant reducibility point is $\nu=\frac
12$, where we find:
\begin{equation}
\begin{aligned}
(X(A_1^l)_0/X(A_1^l)_1,\langle~,~\rangle^0_\bullet)&=(L(A_1^l),\langle~,~\rangle^{(A_1^l)}_\bullet);\\
(X(A_1^l)_1/X(A_1^l)_2,\langle~,~\rangle^1_\bullet)&=(L(A_1^s),\langle~,~\rangle^{(A_1^s)}_\bullet);\\
(X(A_1^l)_2,\langle~,~\rangle^2_\bullet)&=(L(G_2(a_1),\C
L_\triv),\langle~,~\rangle^{(G_2(a_1),\C L_\triv)}_\bullet),
\end{aligned}
\end{equation}
and so and so $P^h_{(A_1^l),(A_1^s)}(q)=q^{\frac{3-2-1}2}=1$,
and so $P^h_{(A_1^l),(G_2(a_1),\C L_\triv)}(q) =q^{\frac{4-2-2}2}=1$,
and $P^h_{(A_1^l),(G_2(a_1),\C L_\refl)}(q)=0.$

Finally, we have the case $0$, where we consider the minimal principal
series with central character $\nu(0,1,-1).$ The relevant reducibility
point is $\nu=1.$ We compute the Janzten filtration and the signature
of the forms using the normalized long intertwining operator on
$W$-types. The only case where more care is needed is the $W$-type
$2_2$ appearing with multiplicity $2$ which corresponds to the factor
$L(A_1^s).$ The $2\times 2$ matrix giving the operator on this
isotypic space has determinant $\frac{(\frac 12-\nu)(1-\nu)^4}{(\frac
  12+\nu)(1+\nu)^4}$ and trace
$\frac{(1-\nu)(1+3\nu^2)}{(1+\nu)^3(\frac 12+\nu)}.$ In particular,
this implies that one copy of $2_2$ (and hence of $L(A_1^s)$) occurs
in level $1$ of the Jantzen filtration and the other copy in level
$3$. For the signatures, we analyze the eigenvalues. We find the
following structure of the filtration together with signatures:
\begin{equation}
\begin{aligned}
(X(0)_0/X(0)_1,\langle~,~\rangle_\bullet^0)&=(L(0),\langle~,~\rangle^{(0)}_\bullet);\\
(X(0)_1/X(0)_2,\langle~,~\rangle_\bullet^1)&=-(L(A_1^s),\langle~,~\rangle^{(A_1^s)}_\bullet);\\
(X(0)_2/X(0)_3,\langle~,~\rangle_\bullet^2)&=(L(A_1^l),\langle~,~\rangle^{(A_1^l)}_\bullet)-(L(G_2(a_1),\C
L_\refl),\langle~,~\rangle^{(G_2(a_1),\C L_\refl)}_\bullet);\\
(X(0)_3/X(0)_4,\langle~,~\rangle_\bullet^3)&=(L(A_1^s),\langle~,~\rangle^{(A_1^s)}_\bullet);\\
(X(0)_4,\langle~,~\rangle_\bullet^4)&=(L(G_2(a_1),\C
L_\triv),\langle~,~\rangle^{(G_2(a_1),\C L_\triv)}_\bullet).\\
\end{aligned}
\end{equation}
The corresponding hermitian KL polynomials are:
$P^h_{(0),(A_1^l)}=q^{\frac{2-0-2}2}=1$,
$P^h_{(0),(A_1^s)}=(-1)q^{\frac{3-0-1}2}+ q^{\frac{3-0-3}2}=1-q$,
$P^h_{(0),(G_2(a_1),\C L_\triv)}=q^{\frac{4-0-4}2}=1$,
$P^h_{(0),(G_2(a_1),\C L_\refl)}=(-1) q^{\frac{4-0-2}2}=-q.$

\smallskip

In conclusion, for the subregular $s$ in $G_2$, $P^h_{(\C O,\C L),(\C O'\C
  L')}(q)=P_{(\C O,\C L),(\C O'\C
  L')}(-q)$, and Conjecture \ref{main-conj} is verified in this case.

\ifx\undefined\bysame
\newcommand{\bysame}{\leavevmode\hbox to3em{\hrulefill}\,}
\fi

\end{document}